\newcommand{\R}{\mathbb{R}}
\newcommand{\N}{\mathbb{N}}
\newcommand{\Z}{\mathbb{Z}}
\newcommand{\avfI}{\langle f\rangle_{I}}
\newcommand{\fhaarI}{\langle f,h_I\rangle }
\newcommand{\fhaarIsib}{\langle f,h_{I^s}\rangle }
\newcommand{\fhaarJ}{\langle f,h_J\rangle }
\newcommand{\fonehaarJminus}{\langle f_1,h_{J_{-}}\rangle }
\newcommand{\fonehaarJplus}{\langle f_1,h_{J_{+}}\rangle }
\newcommand{\ftwohaarJminus}{\langle f_2,h_{J_{-}}\rangle }
\newcommand{\ftwohaarJplus}{\langle f_2,h_{J_{+}}\rangle }
\newcommand{\bonejhaarJplus}{\langle b_{1,j},h_{J_{+}}\rangle }
\newcommand{\btwokhaarJminus}{\langle b_{2,k},h_{J_{-}}\rangle }
\newcommand{\bonehaarJplus}{\langle b_{1},h_{J_{+}}\rangle }
\newcommand{\btwohaarJminus}{\langle b_{2},h_{J_{-}}\rangle }
\newcommand{\bonejhaarIjhat}{\langle b_{1,j},h_{\hat{I}_{j}}\rangle }
\newcommand{\bonejhaarIj}{\langle b_{1,j},h_{{I}_{j}}\rangle }
\newcommand{\btwokhaarIkhat}{\langle b_{2,k},h_{\hat{I}_{k}}\rangle }
\newcommand{\btwokhaarIk}{\langle b_{2,k},h_{{I}_{k}}\rangle }
\newcommand{\gonehaarJplus}{\langle g_{1},h_{J_{+}}\rangle }
\newcommand{\gtwohaarJminus}{\langle g_{2},h_{J_{-}}\rangle }
\newcommand{\avfoneI}{\langle f_1\rangle_{I}}
\newcommand{\avftwoI}{\langle f_2\rangle_{I}}
\newcommand{\avfoneS}{\langle f_1\rangle_{S}}
\newcommand{\avftwoT}{\langle f_2\rangle_{T}}
\newcommand{\avftwoJ}{\langle f_2\rangle_{J}}
\newcommand{\avbI}{\langle b\rangle_{I}}
\newcommand{\avbIsib}{\langle b\rangle_{I^s}}
\newcommand{\avbIhat}{\langle b\rangle_{\hat{I}}}
\newcommand{\avbIplus}{\langle b\rangle_{I_{+}}}
\newcommand{\avbIminus}{\langle b\rangle_{I_{-}}}
\newcommand{\bhaarI}{\langle b,h_I\rangle }
\newcommand{\BMO}{\mathrm{BMO}}
\newcommand{\cexp}{\mathsf{E}}
\newcommand{\D}{\mathscr{D}}
\newcommand{\Hd}{\mathcal{H}}
\newcommand{\Maximal}{\mathcal{M}}
\newcommand{\MaximalD}{\Maximal_\D}
\newcommand{\Ahat}{\widehat{A}}
\newcommand{\musib}{[\mu]_{\mathrm{sib}}}
\newcommand{\Hilbert}{\mathbb{H}}
\newtheorem{thm}{Theorem}[section]
\newtheorem{lemma}[thm]{Lemma}
\newtheorem{cor}[thm]{Corollary}
\newtheorem{prop}[thm]{Proposition}
\newtheorem{rem}[thm]{Remark}
\numberwithin{equation}{section}
\newtheorem{ltheorem}{Theorem}
\def\one{\mbox{1\hspace{-4.25pt}\fontsize{12}{14.4}\selectfont\textrm{1}}}
\theoremstyle{definition}
\begin{document}

\title[Commutator Estimates for dyadic operators]{Commutator estimates for Haar shifts with general measures}
\author[T. Borges]{Tainara Borges}
\address{Tainara Borges \hfill\break\indent 
 Department of Mathematics \hfill\break\indent 
 Brown University \hfill\break\indent 
 151 Thayer Street\hfill\break\indent 
 Providence, RI, 02912 USA}
\email{tainara\_gobetti\_borges@brown.edu}
\author[J.M. Conde-Alonso]{Jos\'{e} M. Conde Alonso}
\thanks{J. M. Conde-Alonso was supported by grants CNS2022-135431 and RYC2019-027910-I (Ministerio de Ciencia, Spain)}
\address{Jos\'{e} M. Conde Alonso \hfill\break\indent 
 Departamento de Matem\'aticas \hfill\break\indent 
 Universidad Aut\'onoma de Madrid \hfill\break\indent 
 C/ Francisco Tom\'as y Valiente sn\hfill\break\indent 
 28049 Madrid, Spain}
\email{jose.conde@uam.es}
\author[J. Pipher]{Jill Pipher}
\address{Jill Pipher \hfill\break\indent 
 Department of Mathematics \hfill\break\indent 
 Brown University \hfill\break\indent 
 151 Thayer Street\hfill\break\indent 
 Providence, RI, 02912 USA}
\email{jill\_pipher@brown.edu}
\author[N.A. Wagner]{Nathan A. Wagner}
\thanks{N. A. Wagner was supported in part by National Science Foundation grant DMS \# 2203272.}

\address{Nathan A. Wagner \hfill\break\indent 
 Department of Mathematics \hfill\break\indent 
Brown University \hfill\break\indent 151 Thayer St \hfill\break\indent 
Providence, RI, 02912 USA}
\email{nathan\_wagner@brown.edu}

\date{\today}

\subjclass[2010]{}%

\keywords{}%

\begin{abstract} 
We study $L^p(\mu)$ estimates for the commutator $[\Hd,b]$, where the operator $\Hd$ is a dyadic model of the classical Hilbert transform introduced in \cite{DKPS23,domelevopetermichl} and is adapted to a non-doubling Borel measure $\mu$ satisfying a dyadic regularity condition which is necessary for $\Hd$ to be bounded on $L^p(\mu)$. We show that $\|[\Hd, b]\|_{L^p(\mu) \rightarrow L^p(\mu)} \lesssim \|b\|_{\BMO(\mu)}$, but to {\it characterize} martingale BMO requires additional commutator information.  We prove weighted inequalities for $[\Hd, b]$  
together with a version of the John-Nirenberg inequality adapted to appropriate weight classes $\Ahat_p$ that
we define for our non-homogeneous setting. This requires establishing reverse H\"{o}lder inequalities for these new weight classes. Finally, we revisit the appropriate class of nonhomogeneous measures $\mu$ for the study of different types of Haar shift operators. 
\end{abstract}

\maketitle

\section*{Introduction}

In this paper, we consider the relationship between the martingale $\BMO$ space and commutators of a dyadic Hilbert transform that was recently introduced in \cite{domelevopetermichl} and \cite{DKPS23}, but studied here in the context of non-doubling measures.

Motivated by showing a factorization theorem for the Hardy space, whose dual had just been identified as $\BMO$, 
the seminal paper \cite{CoifmanRochbergWeiss} considered the commutators of Calder\'on-Zygmund singular integral operators 
($K$) with the operator of multiplication by
a $\BMO$ function, $b$. With $b$ also denoting that multiplication operator, they showed that these commutators, 
$[K,b](\cdot):= Kb(\cdot) -bK(\cdot)$, are bounded maps of $L^p(\mathbb R^n, dx)$ to itself, with operator norm a constant
times $\|b\|_{BMO}$. Conversely, they proved that if all the commutators of Riesz transforms, $[R_j, \phi]$, with multiplication by a 
function $\phi$ are bounded maps of $L^p$, for some $1<p<\infty$, then $\phi \in \text{BMO}$, thus {\it characterizing} $\BMO$ in 
terms of an operator norm. Specifically, for $\mathbb R$, this amounts to saying that the boundedness of the commutator with the Hilbert transform 
$\Hilbert f (x) = \mathrm{p.v} \int_\R \frac{f(y)}{x-y} dy$
characterizes $\text{BMO}(\R)$.

Subsequent studies took this foundational result in a number of different and expanded directions. In one direction, \cite{CLMS} linked
such commutator estimates to their div-curl approach to compensated compactness. In the theory of martingales, which is the context for this paper, 
commutators of martingale transforms were investigated by \cite{Jan}, for {\it regular} martingales, and later by \cite{Treil13} in the ``nonregular" or nonhomogeneous setting. Endpoint estimates were investigated in \cite{Bonami2023}. 

The characterization of $\BMO$ in terms of commutators of Calder\'on-Zygmund operators extends to certain measure spaces beyond $\R^n$, equipped with the Lebesgue measure. In particular, commutators of nondegenerate Calder\'on-Zygmund operators characterize $\BMO(\mathbb{X},\mu)$ for certain metric measure spaces where the measure satisfies the {\it doubling condition} (defined below) whereas we are interested in the non-doubling setting.

Absent the doubling condition, the definition of appropriate function spaces that are connected to the boundedness of 
singular integral operators, such as the Hardy spaces
or $\BMO$ spaces, can be a difficult matter. One example of success in this context is the $\mathrm{RBMO}(\mu)$ of \cite{RBMOTolsa} for 
measures on 
$\R^n$ with polynomial growth.

Our focus here is on commutators of certain discrete, or martingale, operators that, in the doubling setting,
have been good models for Calder\'on-Zygmund operators. We work with Borel measures $\mu$ on $\R$ defined on any dyadic system $\D$ - in particular, the usual one. The system $\D = \cup_{k\in\Z} \D_k$ is a two-sided sequence of partitions that generates a martingale filtration on $\R$. This equips the space with a natural martingale $\BMO$ space \cite{Garsia}, that will be denoted $\BMO(\mu)$. See
Section \ref{sec1} for the definition and properties. 
The dyadic operator we consider here is a modern version of Petermichl's Haar shift, termed 
``the dyadic Hilbert transform" in \cite{domelevopetermichl}. 
The classical dyadic Haar shift operator of \cite{Petermichl2000} has 
been studied in the non-doubling setting in \cite{LSMP}, and more recently in \cite{CAPW}. In \cite{LSMP}, it was shown that
the boundedness of these classical Haar shift 
operators on $L^p$ spaces, for $p \neq 2$, necessitated additional constraints on the non-doubling measure. 

We now introduce (as in \cite{LSMP}) the Haar wavelet basis on $\mathbb R(d\mu)$, together with a quantity that normalizes the Haar functions in $L^2(d\mu)$.

Intervals $I \in \mathscr{D}$ have parents $\widehat{I}$, children $I_-$ and $I_+$, and a sibling $I^s$. Our measures $\mu$ fail to satisfy
the {\it doubling} doubling property, i.e., $\mu(\widehat{I}) \lesssim \mu(I)$ with an implied constant independent of $I$. In this setting, the following
quantity arises naturally in order to define Haar functions and, following \cite{LSMP}, we denote:
$$
m(I):=\frac{\mu(I_{-})\mu(I_{+})}{\mu(I)}.
$$
A measure $\mu$ on $\R$ is said to be {\it balanced} if $m(I) \sim m(\widehat{I})$ for all $I \in \D$. The class of balanced measures arises naturally in the study of Haar shifts - \cite{LSMP} showed that this property characterizes the $L^p$-boundedness of the classical
Haar shift operator of \cite{Petermichl2000} and thus the entire class of {\it higher complexity} Haar shifts (defined in Subsection \ref{subsec14}). 

Haar shift operators are linear maps that alter the Haar expansion of functions in terms of the Haar basis elements:
\begin{equation*}
    h_{I}(x)=\sqrt{m(I)}\left(\frac{\mathbbm{1}_{I_{-}}}{\mu(I_{-})}-\frac{\mathbbm{1}_{I_{+}}}{\mu(I_{+})}\right), \; I \in \D.
\end{equation*}
Above and throughout, we use standard notation for integral averages and pairings:  
\begin{equation*}
    \langle f\rangle_{I} =\frac{1}{\mu(I)}\int_{I} fd\mu, \quad \langle f, g \rangle = \int fg \; d\mu.
\end{equation*}
We consider the following dyadic model of the Hilbert transform:
$$
\Hd f (x) = \sum_{I \in \D} \left[  \langle f, h_{I_+} \rangle h_{I_-} (x)- \langle f, h_{I_-} \rangle h_{I_+} (x)  \right].
$$
One can see that for each $I\in \D$,
\begin{equation*}
   \Hd(h_{I_{+}}):=h_{I_{-}} \text{ and},\,\Hd(h_{I_{-}}):=-h_{I_{+}}, 
\end{equation*}
so the behavior of the operator on discrete sine and cosine waves matches that of $\Hilbert$ on the continuous ones. $\Hd$ was studied in \cite{domelevopetermichl} and \cite{DKPS23}, where its connection with $\Hilbert$ is made evident. In fact, 
in the doubling setting, it was shown in [\cite{DKPS23},Theorem 2.1]] that the  
commutator $[\Hd, b]$ with this Haar shift characterizes dyadic $\BMO$ and $\Hd$ was used in their characterization of the multiparameter ``little $\BMO$" space. Further, the operator $\Hd$ has been useful in the recent proof of the counterexample to the so-called matrix $A_2$ conjecture \cite{DPTV}.  

By orthonormality of the Haar functions, $\Hd$ is an $L^2(\mu)$-bounded Haar shift operator. Thus the theory in \cite{LSMP} applies to it as well; in particular, it extends to an $L^p(\mu)$-bounded operator whenever $\mu$ is balanced. But the balanced class does not characterize the $L^p$-boundedness of this particular Haar shift, $\Hd$. We call $\mu$ {\it sibling balanced} if 
$$
\musib := \sup_{I\in \D} \frac{m(I)}{m(I^s)}< \infty.
$$
Clearly, balanced measures are sibling balanced. Moreover, we shall see that the sibling balanced class is strictly larger than the balanced one and is the right one to guarantee boundedness on $L^p(\mu)$ of $\Hd$ as well as that of the commutator $[\Hd,b]$.

Therefore, we test the commutator characterization of $\BMO(\mu)$ in that context. Perhaps surprisingly, even for this restricted class of measures, the characterization fails.

\begin{ltheorem} \label{th:theoremA}
   Let $\mu$ be sibling balanced, $b\in \BMO(\mu)$ and $1<p<\infty$. Then, the following hold: 
   \begin{itemize}
        \item \textbf{Upper estimate:}
$$
\|[\Hd,b]\|_{L^p(\mu)\rightarrow L^p(\mu)} \lesssim_{p,[\mu]_{sib}} \|b\|_{\BMO(\mu)}.
$$
        \item \textbf{Lower estimate:}
$$
\|b\|_{\BMO(\mu)}\lesssim_{p,[\mu]_{sib}} \|[\Hd,b]\|_{L^p(\mu)\rightarrow L^p(\mu)} + \sup_{k \in \Z} \|[\Hd,\cexp_k b]\|_{L^p(\mu)\rightarrow L^p(\mu)}.
$$
        \item \textbf{Failure of lower estimate in general:} The estimate 
$$
\|b\|_{\BMO(\mu)}\lesssim \|[\Hd,b]\|_{L^{2}(\mu)\rightarrow L^2(\mu)}
$$
fails in general, even if we allow the implicit constant to depend on $\musib$.         
    \end{itemize}
\end{ltheorem}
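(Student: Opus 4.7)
I would treat the three claims separately.

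\emph{Upper estimate.} Plan: decompose $bf = \pi_b f + \pi_b^\ast f + \Lambda_b f$ by the standard martingale paraproduct, where $\pi_b f = \sum_I \langle b, h_I\rangle \langle f\rangle_I h_I$ and analogous formulas hold for the adjoint $\pi_b^\ast$ and the resonant diagonal piece $\Lambda_b$. Each paraproduct is $L^p(\mu)$-bounded with norm $\lesssim \|b\|_{\BMO(\mu)}$, using martingale Burkholder--Doob estimates together with the John--Nirenberg-type inequality developed elsewhere in the paper from the $\Ahat_p$/reverse-H\"older framework. Since $\Hd:L^p(\mu)\to L^p(\mu)$ whenever $\musib<\infty$, the identity $[\Hd, b] = [\Hd, \pi_b] + [\Hd, \pi_b^\ast] + [\Hd, \Lambda_b]$ and the triangle inequality deliver the estimate.

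\emph{Lower estimate.} Plan: recover the Haar coefficients of $b$ by testing the commutator against pairs of Haar functions. Using $\Hd^\ast = -\Hd$ and the defining relations $\Hd h_{J_\pm} = \mp h_{J_\mp}$, a direct computation gives for instance
\[
 \langle [\Hd, b] h_{J_+}, h_{J_-}\rangle = \langle b, h_{J_+}^2\rangle - \langle b, h_{J_-}^2\rangle,
\]
an explicit linear combination of the averages $\langle b\rangle_K$ on the grandchildren $K$ of $J$. Choosing suitable test pairs and telescoping in $J\subseteq I_0$ down to a reference scale $\D_k$, one reconstructs the truncated Haar square sum over these $J$ modulo a coarse remainder depending only on the level-$k$ averages of $b$. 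Since $\cexp_k b$ preserves those averages, the remainder coincides with the corresponding pairing for $[\Hd, \cexp_k b]$ and is therefore dominated by $\sup_k \|[\Hd, \cexp_k b]\|_{L^p\to L^p}$. Combined with the Haar characterization $\|b\|_{\BMO(\mu)}^2 \sim \sup_{I_0}(1/\mu(I_0))\sum_{J\subseteq I_0}|\langle b, h_J\rangle|^2$, this closes the lower estimate. The main obstacle is matching the coarse remainder precisely to a $\cexp_k b$-commutator pairing in the non-doubling setting, where the Haar normalizations $\sqrt{m(I)}$ introduce sibling-asymmetric weights that must be tracked throughout.

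\emph{Failure.} Plan: construct an explicit counterexample consisting of a sibling balanced $\mu$ and a function $b$ with $\|b\|_{\BMO(\mu)}$ arbitrarily large but $\|[\Hd, b]\|_{L^2 \to L^2}$ bounded. Fix a scale $k_0$ and take $b$ whose Haar expansion is concentrated at $\D_{k_0}$; then $[\Hd, b]$ reduces to a finite-scale operator whose $L^2(\mu)$ norm can be estimated directly. Choose $\mu$ so that sibling ratios $\mu(I_+)/\mu(I_-)$ remain bounded (so $\musib < \infty$) but the parent/child ratios at scale $k_0$ become extreme, inflating the single-coefficient BMO contribution $|\langle b, h_J\rangle|^2/\mu(J) \sim m(J)(\langle b\rangle_{J_-} - \langle b\rangle_{J_+})^2/\mu(J)$ for some $J \in \D_{k_0}$. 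The resulting family satisfies $\|b\|_{\BMO(\mu)}/\|[\Hd, b]\|_{L^2 \to L^2} \to \infty$. This is consistent with the second part of the theorem because $\sup_k \|[\Hd, \cexp_k b]\|$ must simultaneously blow up, capturing exactly the BMO oscillation that $\Hd$ alone cannot detect.
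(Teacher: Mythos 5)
The upper estimate has a genuine gap. You propose to bound each of $\pi_b$, $\pi_b^\ast$, $\Lambda_b$ separately on $L^p(\mu)$ by $\|b\|_{\BMO(\mu)}$ and then invoke the triangle inequality. That plan cannot succeed, because $\Lambda_b$ is not a bounded operator. Writing $h_I^2 = \frac{\mathbbm{1}_I}{\mu(I)} + (\int h_I^3\,d\mu)\,h_I$, one finds
$\Lambda_b f = \Lambda_b^0 f + \Lambda_b^1 f$ with $\Lambda_b^0 f = \sum_I \langle f, h_I\rangle \avbI h_I$. This is a Haar multiplier whose symbol $\avbI$ is unbounded for general $b \in \BMO(\mu)$; no triangle-inequality decomposition of the commutator into three bounded operators is possible through $\Lambda_b$. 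What the paper exploits instead is that $[\Hd, \Lambda_b^0]$ has a very special form: using $\Hd h_I = \pm h_{I^s}$, the commutator collapses to the Haar shift $\sum_I \mathrm{sign}(I)(\avbI - \avbIsib)\fhaarI h_{I^s}$, whose coefficients $\avbI - \avbIsib$ are bounded by $2\|b\|_{\BMO(\mu)}$. Only after this cancellation does the sparse/Haar-shift theory for sibling balanced measures apply. You need to find and exploit that cancellation; simply citing paraproduct bounds will not close the argument. (The paper's chosen decomposition $bf = \Delta_b f + \pi_b f + \Lambda_b^0 f$ with $\Delta_b f = \sum_I \bhaarI \fhaarI h_I^2$ is also more convenient, since $\Delta_b$ and $\pi_b$ are genuinely bounded by Treil's martingale results.)

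For the lower estimate, your idea of testing against Haar pairs and computing $\langle [\Hd, b]h_{J_+}, h_{J_-}\rangle$ is correct as far as the algebra goes, but the telescoping plan is too vague to produce the stated inequality, and in particular it is unclear how your ``coarse remainder'' would isolate the martingale-difference part $D_b = \sup_I |\avbIplus - \avbIminus|$ that genuinely requires the extra terms $[\Hd, \cexp_k b]$. The paper separates the two contributions to $\|b\|_{\BMO(\mu)}$ cleanly: testing $[\Hd,b]$ on $\one_I$ and using $\|\Hd f\|_{L^p} \sim \|f\|_{L^p}$ recovers $K_b^p = \sup_I (\frac{1}{\mu(I)}\int_I|b-\langle b\rangle_I|^p)^{1/p}$, while testing $[\Hd, \cexp_{k+1}b]$ on $h_{I_-}$ yields $|\avbIminus - \avbIplus|\|h_{I_-}\|_{L^p} \lesssim \|[\Hd,\cexp_{k+1}b]\|\,\|h_{I_-}\|_{L^p}$, giving $D_b$. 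This two-track testing is the crucial mechanism; your reconstruction-by-telescoping would conflate them.

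Your sketch of the failure is directionally reasonable but differs substantially from the paper's construction and is underspecified. The paper builds $\mu_k = \sum_{\ell\in\Z}\delta_\ell + 2^{-k}\one_{\R\setminus\Z}\,dx$ (so $[\mu_k]_{\mathrm{sib}}\sim 1$) and $b_k = \one_{\Z} + 2^k\one_{\R\setminus\Z}$, showing $D_{b_k} \sim 2^k$ while $K_{b_k}^2 \lesssim 2^{k/2}$, and then proving via the alternative paraproduct decomposition $bf = \pi_b f + \pi_b^\ast f + \Lambda_b f$ that $\|[\Hd, b_k]\|_{L^2\to L^2} \lesssim 2^{k/2}$, with the key computation being $|c_I(b_k) - c_{I^s}(b_k)| \lesssim 1$ so $[\Hd, \Lambda_{b_k}]$ is uniformly bounded. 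Your proposal of a single-scale $b$ with extreme parent/child ratios is a different construction and it is not obvious that it produces the required gap between $\|b\|_{\BMO(\mu)}$ and $\|[\Hd,b]\|_{L^2\to L^2}$ while keeping $[\mu]_{\mathrm{sib}}$ bounded; in particular a $b$ supported at one dyadic scale may not exhibit the $D_b / K_b$ separation that the counterexample hinges on.
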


Straightforward calculations show that for any $k \in \Z$,
$ 
\|\cexp_k b\|_{\BMO(\mu)} \leq \|b \|_{\BMO(\mu)},
$
which suggests that it might be possible to remove the terms involving $[\Hd,\cexp_k b]$ in the lower estimate of Theorem \ref{th:theoremA}. However, we construct a family of sibling balanced measures and BMO functions that show that these bounds are essential to control the norm in $\BMO(\mu)$. The upper estimate ensuring the boundedness on $L^p(\mu)$ of $[\Hd,b]$ is, by contrast, the expected one. 

We next turn our attention to weighted estimates for the commutator, that is, boundedness with respect to measures of the form $\omega d\mu$ where $\omega$ is nonnegative. Both one and two-weight inequalities for commutators with singular integral operators have a long history. In the special case of the Hilbert transform and Lebesgue measure, Bloom (\cite{Bloom}) proved an inequality of the form $$\|[b, \mathbb{H}]\|_{L^p(\omega) \rightarrow L^p(\sigma)} \lesssim \|b\|_{{\rm BMO}_\nu},$$ where $\omega, \sigma \in A_p(\R)$, $\nu=\left( \frac{\omega}{\sigma} \right)^{1/p}$, and ${\rm BMO}_{\nu}$ denotes a weighted BMO space. When $\omega=\sigma$, Bloom's result reduces to the one weight inequality with a BMO-norm bound:  

$$\|[b, \mathbb{H}]\|_{L^p(\omega) \rightarrow L^p(\omega)} \lesssim \|b\|_{{\rm BMO}}, \quad \omega \in A_p,$$
and the implicit constant depends on $[\omega]_{A_p}.$ According to the authors of \cite{ChungPereyraPerez}, this one weight inequality was also implicit in a proof due to Str\"{o}mberg of the Coifman, Rochberg, and Weiss result (although the literature is slightly unclear on this point). Various generalizations of this one weight result have been considered in the more general context of Calder\'{o}n-Zygmund singular integral operators; see for example \cite{AlvarezBagbyKurtzPerez,ChungPereyraPerez}. Therefore, in the Euclidean (doubling) setting, we see that the Muckenhoupt class $A_p$ is the right one for weighted inequalities of commutators with singular integrals.  The sharp dependence of the operator norm of the commutator on the weight characteristic has also been investigated; see \cite{Chung} for the special case of the Hilbert transform and \cite{ChungPereyraPerez} for general Calder\'{o}n-Zygmund operators. Heuristically, the commutator behaves worse and is ``more singular'' than classical Calder\'{o}n-Zygmund operators, as evidenced by the sharp quadratic dependence on the $A_2$ weight characteristic - as opposed to the linear dependence proved in the $A_2$ theorem for the operators themselves.

The class $A_p^{bal}(\mu)$, $1<p<\infty$, was introduced in \cite{CAPW} to characterize the weighted bounds for Haar shifts when the underlying assumption that $\mu$ is balanced. We say that $\omega\in A_p^{bal}(\mu)$ if 

\begin{equation*}
[\omega]_{A_p^{bal}(\mu)}:=\sup_{\substack{I, J \in \D:\\ J \in \{I, I_{-}^s, I_{+}^s\} \text{ or }\\ I \in \{J,J_{-}^s, J_{+}^s\} }} \frac{m(I)^{p/2} m(J)^{p/2}}{\mu(I)^{p-1} \mu (J) } \langle \omega \rangle_{I} \langle \sigma \rangle_{J}^{p-1}< \infty,\label{A2b}
\end{equation*}
denoting the dual weight $\sigma=\omega^{-1/(p-1)}$. In view of Theorem \ref{th:theoremA}, one would not expect $A_p^{bal}(\mu)$ to be the right class of weights to study the boundedness of $\Hd$ and hence of $[\Hd,b]$. Below we explain the modifications needed to define the class $A_p^{sib}(\mu)$ that governs the $L^p$-boundedness of $\Hd$ with respect to an underlying sibling balanced measure. But the situation with the commutator is more interesting: one needs a characterization of $\BMO(\mu)$ functions in terms of weights $\omega$ (specifically, of their logarithms). To that end, we will see that neither of the classes $A_2^{bal}(\mu)$ and class $A_2^{sib}(\mu)$ are the right ones. Instead, $\text{BMO}(\mu)$ functions can be characterized by a smaller weight class $\Ahat_p(\mu)$, which is the class of weights $\omega$ which satisfy
\begin{equation}\label{A2s} \tag{$\widehat{A}_p(\mu)$}
[\omega]_{\widehat{A}_p(\mu)}:=\sup_{\substack{I, J \in \D:\\ J \in \{I, \widehat{I}\} \text{ or }\\ I \in \{ J, \widehat{J} \}  }}\langle\omega \rangle_{I} \langle \sigma \rangle_{J}^{p-1}< \infty. 
\end{equation}
The class $\widehat{A}_p(\mu)$ has several nice features that make it the right counterpart of the classical $A_p$ one to study commutators, partly because weights in this class satisfy a reverse H\"older inequality. 

\begin{ltheorem} \label{th:theoremB} Let $\mu$ be a locally finite Borel measure on $\R$ satisfying $\mu(I)>0$ for all $I \in \D$.
    \begin{itemize}
        \item[\textbf{(a)}] Suppose $\omega \in \widehat{A}_2(\mu). $ Then the function $\log \omega \in \BMO(\mu)$. Conversely, suppose $b \in \BMO(\mu).$ Then for sufficiently small $\delta>0$, the weight function $e^{\delta b}$ belongs to $\widehat{A}_2(\mu)$. 
        \item[\textbf{(b)}] Let $1<p<\infty$ and $\omega \in \widehat{A}_p(\mu)$. Then there exists $\gamma>1$, depending only on $p$ and $[\omega]_{\widehat{A}_p(\mu)}$, so that for all $I \in \D$,
\begin{equation*}
\left( \frac{1}{\mu(I)}\int_{I} \omega^\gamma \, d\mu \right)^{1/\gamma} \lesssim_{\omega} \left( \frac{1}{\mu(I)}\int_{I} \omega\, d\mu \right).
\end{equation*}
    \end{itemize}
\end{ltheorem}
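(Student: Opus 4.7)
The plan is to exploit a single structural property of $\widehat{A}_p(\mu)$: the parent-child comparison $\langle \omega \rangle_I \leq [\omega]_{\widehat{A}_p(\mu)} \langle \omega \rangle_{\widehat{I}}$, which follows by combining the $\widehat{A}_p$ inequality with $J = \widehat{I}$ and the universal Jensen-type bound $\langle \omega \rangle_{\widehat{I}} \langle \sigma \rangle_{\widehat{I}}^{p-1} \geq 1$. This comparison substitutes for the doubling property used in classical proofs. I would first handle the forward direction of (a), then prove (b), and finally return to the converse in (a).

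For the forward direction of (a), specializing $\widehat{A}_2$ to $I = J$ yields the classical $A_2$ inequality $\langle \omega \rangle_I \langle \sigma \rangle_I \leq [\omega]_{\widehat{A}_2(\mu)}$. Jensen applied to $e^{\pm x}$ gives $e^{\pm \langle \log \omega \rangle_I} \leq \langle \omega^{\pm 1} \rangle_I$, so that $\langle e^{\pm(\log \omega - \langle \log \omega \rangle_I)} \rangle_I \leq [\omega]_{\widehat{A}_2(\mu)}$. Summing the two exponentials via $e^{|t|} \leq e^t + e^{-t}$ and applying Jensen to the concave function $\log$ would then yield
\[
\frac{1}{\mu(I)} \int_I |\log \omega - \langle \log \omega \rangle_I|\, d\mu \leq \log\bigl(2 [\omega]_{\widehat{A}_2(\mu)}\bigr),
\]
proving $\log \omega \in \BMO(\mu)$.

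For (b), I would fix $I \in \D$, set $\lambda_0 = \langle \omega \rangle_I$, and choose a stopping parameter $\tau > [\omega]_{\widehat{A}_p(\mu)}$. Recursively construct the Calder\'on-Zygmund families $\{I_j^{(k)}\}$ of maximal dyadic subintervals of $I$ on which $\langle \omega \rangle_{I_j^{(k)}} > \tau^k \lambda_0$, and set $E_k := \bigcup_j I_j^{(k)}$. Maximality ensures $\langle \omega \rangle_{\widehat{I_j^{(k)}}} \leq \tau^k \lambda_0$, and the parent-child comparison then provides the non-doubling substitute $\langle \omega \rangle_{I_j^{(k)}} \leq [\omega]_{\widehat{A}_p(\mu)} \tau^k \lambda_0$. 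Combined with the weak-$(1,1)$ estimate for the dyadic maximal function on $I$, this yields $\mu(E_{k+1}) \leq ([\omega]_{\widehat{A}_p(\mu)}/\tau) \mu(E_k)$, hence geometric decay of $\mu(E_k)$. Bounding $\int_I \omega^\gamma d\mu$ by a distributional sum over the level sets $E_k \setminus E_{k+1}$, on which $\omega \leq \tau^{k+1} \lambda_0$ almost everywhere, would then close out the reverse H\"older inequality for $\gamma > 1$ sufficiently close to $1$.

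For the converse of (a), starting from $b \in \BMO(\mu)$ a martingale John-Nirenberg inequality gives $\langle e^{\delta |b - \langle b \rangle_I|} \rangle_I \leq C$ for small $\delta > 0$, which immediately provides the diagonal $(I,I)$ part of the $\widehat{A}_2$-condition for $\omega = e^{\delta b}$. For the parent-child part, I would expand $\langle \sigma \rangle_{\widehat{I}}$ as the convex combination $\frac{\mu(I)}{\mu(\widehat{I})} \langle \sigma \rangle_I + \frac{\mu(I^s)}{\mu(\widehat{I})} \langle \sigma \rangle_{I^s}$, reducing matters to controlling the weighted sibling product $\frac{\mu(I^s)}{\mu(\widehat{I})} \langle e^{\delta b} \rangle_I \langle e^{-\delta b} \rangle_{I^s}$ uniformly in $I$. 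I expect this to be the main obstacle: a naive H\"older-type bound picks up the unbounded ratio $\mu(\widehat{I})/\mu(I)$, so the argument must exploit the full martingale BMO control of the dyadic jumps $\langle b \rangle_I - \langle b \rangle_{\widehat{I}}$ together with the reverse H\"older obtained in (b), applied to an auxiliary weight, to obtain the required uniform bound.
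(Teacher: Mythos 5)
Your plan for part (b) is essentially the paper's: the key observation $\langle \omega \rangle_I \leq [\omega]_{\widehat{A}_p(\mu)} \langle \omega \rangle_{\widehat{I}}$ (derived exactly as you indicate, from the $\widehat{A}_p$ condition with $J=\widehat{I}$ plus Jensen) is precisely the non-doubling substitute the paper uses in its stopping-time argument. Your level-set bookkeeping differs cosmetically from the paper's distribution-function integral, but the mechanism is the same, and your convergence requirement (take $\tau \geq 2[\omega]_{\widehat{A}_p(\mu)}$, then $\gamma-1$ small enough that $\tau^{\gamma-1} < \tau/[\omega]_{\widehat{A}_p(\mu)}$) works. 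Part (b) is fine.

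Both directions of part (a), however, have genuine gaps, and in both cases the issue is the same: you are only proving the \emph{diagonal} estimate, whereas martingale $\BMO(\mu)$ is defined by oscillation against the parent average,
\[
\|b\|_{\BMO(\mu)} = \sup_{I\in\D}\frac{1}{\mu(I)}\int_I |b - \langle b\rangle_{\widehat I}|\,d\mu,
\]
and for a non-regular filtration this is \emph{strictly stronger} than $\sup_I \frac{1}{\mu(I)}\int_I |b-\langle b\rangle_I|\,d\mu$ (it additionally controls the jumps $\sup_I |\langle b\rangle_{I_+} - \langle b\rangle_{I_-}|$, cf.\ the equivalence \eqref{bmodef}). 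In the forward direction your Jensen computation, which specializes $\widehat A_2$ to $I=J$, only yields the diagonal quantity and hence does not by itself prove $\log\omega\in\BMO(\mu)$; you would need to additionally bound the jumps (which can be done by applying the off-diagonal $\widehat A_2$ pairs $(I_+,I)$ and $(I,I_-)$), or, more cleanly, run the same Jensen computation with $\langle\log\omega\rangle_{\widehat I}$ in place of $\langle\log\omega\rangle_I$, which then uses the $\widehat A_2$ condition directly. This is exactly what the paper does.

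In the converse direction you correctly identify the parent--child estimate as the obstacle, but the route you sketch---decomposing $\langle\sigma\rangle_{\widehat I}$ into a convex combination over the children and trying to control the sibling term via reverse H\"older on an auxiliary weight---does not close; the ratio $\mu(\widehat I)/\mu(I)$ is unbounded and the RHI gives no handle on it, and moreover invoking part (b) here would be circular since you only established it for weights already in $\widehat A_p(\mu)$. The missing idea is that martingale John--Nirenberg is naturally stated with the \emph{parent} average: $\mu(\{x\in I: |b(x)-\langle b\rangle_{\widehat I}|>\alpha\})\leq Ce^{-\delta_0\alpha}\mu(I)$. Using this directly yields $\langle e^{\delta b}\rangle_I \lesssim e^{\delta\langle b\rangle_{\widehat I}}$, $\langle e^{\delta b}\rangle_{\widehat I}\lesssim e^{\delta\langle b\rangle_{\widehat I}}$, and the dual estimates for $\sigma=e^{-\delta b/(p-1)}$, and multiplying the appropriate pair gives the off-diagonal $\widehat A_p$ bound with no loss from the measure of the parent. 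This one-line fix replaces your proposed detour and is what the paper's Lemma \ref{sufficient} does.
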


It is important to remark that the class $\widehat{A}_p(\mu)$ seems to be the right nondoubling analog of $A_p$ for all nondoubling measures, not just the balanced or sibling balanced ones. Theorem \ref{th:theoremB} holds for all measures $\mu$, yielding a satisfactory theory of weights independent of the doubling condition. Weights in $\widehat{A}_p(\mu)$ are sufficient to ensure weighted $L^p$-boundedness of $[\Hd,b]$. 

\begin{ltheorem} \label{th:theoremC}
Let $\mu$ be sibling balanced and atomless. Let $1<p<\infty$, $b \in \BMO(\mu)$, and $\omega \in \widehat{A}_p(\mu)$. Then there holds 
\begin{equation*}\|[\Hd,b]\|_{L^p(\omega)\rightarrow L^{p}(\omega)} \lesssim_{[\omega]_{\Ahat_p(\mu)},p,\musib} \|b\|_{\BMO(\mu)}.
\end{equation*}
\end{ltheorem}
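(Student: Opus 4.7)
The plan is to deduce Theorem \ref{th:theoremC} from the Coifman–Rochberg–Weiss conjugation trick, implemented in this non-homogeneous dyadic setting with the help of the weight theory of Theorem \ref{th:theoremB}. As a starting ingredient, I would take the (presumably already established) weighted bound for the bare operator: for sibling balanced $\mu$ and $\omega \in \Ahat_p(\mu)$ one has
\begin{equation*}
\|\Hd\|_{L^p(\omega) \to L^p(\omega)} \;\leq\; \Phi_p\bigl([\omega]_{\Ahat_p(\mu)}, \musib\bigr)
\end{equation*}
for some increasing $\Phi_p$, which follows from the theory of $\Ahat_p(\mu)$ developed earlier in the paper together with the sibling balanced hypothesis. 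For $z \in \C$ define $T_z := e^{zb}\Hd\, e^{-zb}$ (acting on nice test functions). Since $\Hd$ acts through an absolutely convergent Haar expansion, $z \mapsto T_z f(x)$ is entire for a.e.\ $x$ and $\tfrac{d}{dz}T_z f\big|_{z=0} = [b,\Hd]f$. Cauchy's formula then gives, for any $r > 0$,
\begin{equation*}
[b,\Hd]f \;=\; \frac{1}{2\pi i}\oint_{|z|=r}\frac{T_z f}{z^{2}}\,dz,
\qquad
\|[b,\Hd]f\|_{L^p(\omega)} \;\leq\; \frac{1}{r}\sup_{|z|=r}\|T_z f\|_{L^p(\omega)}.
\end{equation*}
Setting $\omega_z := e^{p\,\mathrm{Re}(z)\,b}\,\omega$, a direct computation gives $\|T_z f\|_{L^p(\omega)}^{p} = \int |\Hd(e^{-zb}f)|^{p}\,\omega_z\,d\mu$, and, provided $\omega_z \in \Ahat_p(\mu)$ with characteristic uniformly controlled by $[\omega]_{\Ahat_p(\mu)}$, the weighted bound above yields $\|T_z f\|_{L^p(\omega)} \lesssim \|f\|_{L^p(\omega)}$, since $|e^{-zb}|^{p}\,\omega_z = \omega$.

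The core of the argument—and the place where Theorem \ref{th:theoremB} enters decisively—is therefore the stability statement that $[\omega_z]_{\Ahat_p(\mu)} \lesssim [\omega]_{\Ahat_p(\mu)}$ whenever $|z| \leq r_0$ for some $r_0 \asymp 1/\|b\|_{\BMO(\mu)}$, with the suppressed constant allowed to depend on $p$ and $[\omega]_{\Ahat_p(\mu)}$. Fixing a testing pair $(I,J)$ admissible in the definition of $\Ahat_p(\mu)$, I would split
\begin{equation*}
\langle \omega_z \rangle_{I}\langle \omega_z^{-1/(p-1)} \rangle_{J}^{p-1}
\;=\; \langle e^{p\,\mathrm{Re}(z)\,b}\omega\rangle_{I}\,\langle e^{-p\,\mathrm{Re}(z)\,b/(p-1)}\sigma\rangle_{J}^{p-1}
\end{equation*}
and apply H\"older's inequality with exponent $\gamma > 1$ coming from the reverse H\"older inequality of Theorem \ref{th:theoremB}(b), applied to both $\omega$ and its dual $\sigma$ (noting that $\omega \in \Ahat_p(\mu)$ forces $\sigma \in \Ahat_{p'}(\mu)$ by the symmetric form of the condition). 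This reduces matters to bounding averages of the form $\langle e^{s b}\rangle_{I}^{1/s}$ uniformly in $I$ for small $s$; Theorem \ref{th:theoremB}(a) supplies exactly such a bound once $|s|$ is small enough relative to $\|b\|_{\BMO(\mu)}$, since $e^{\delta b} \in \Ahat_2(\mu)$ for small $\delta$ gives John–Nirenberg-type control on both $\langle e^{\delta b}\rangle_I$ and $\langle e^{-\delta b}\rangle_I$.

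The main technical obstacle is a careful bookkeeping of how $r_0$ degrades with $\|b\|_{\BMO(\mu)}$, $p$, $\gamma$, and $[\omega]_{\Ahat_p(\mu)}$, so that after choosing $r = r_0$ in Cauchy's estimate the final constants remain of the claimed form. Once this is settled, one obtains
\begin{equation*}
\|[b,\Hd]f\|_{L^p(\omega)} \;\lesssim_{p,[\omega]_{\Ahat_p(\mu)},\musib}\; \|b\|_{\BMO(\mu)}\,\|f\|_{L^p(\omega)},
\end{equation*}
which is Theorem \ref{th:theoremC}. All the substantive content is packaged into Theorem \ref{th:theoremB}; everything else is a routine adaptation of the CRW conjugation trick to the dyadic non-doubling setting, with the sibling balanced hypothesis entering only through the unweighted-in-$b$ $L^p(\omega)$ bound for $\Hd$.
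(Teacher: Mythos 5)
Your proposal follows essentially the same route as the paper: the Coifman--Rochberg--Weiss conjugation trick via the analytic family $e^{zb}\Hd e^{-zb}$ and Cauchy's formula, reducing to uniform $\widehat A_p(\mu)$ control of the perturbed weights via H\"older with the reverse H\"older exponent from Theorem~\ref{th:theoremB}(b) and the exponential estimate from Lemma~\ref{sufficient}. The only small gaps relative to the paper are that you should invoke Proposition~\ref{AvgFather} to pass from $\widehat A_p(\mu)$ to $A_p^{sib}(\mu)$ (since the weighted bound for $\Hd$ in Corollary~\ref{weightedresultforH} is stated for $A_p^{sib}(\mu)$), and the exponential averages one actually needs to control are products of the form $\langle e^{sb}\rangle_I\,\langle e^{-sb/(p-1)}\rangle_J^{p-1}$ with $J\in\{I,\widehat I\}$, which Lemma~\ref{sufficient} handles directly; otherwise this is the paper's argument (written for general $p$, where the paper spells out $p=2$ and indicates the modifications).
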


We investigate how specialized are our results by studying commutators $[T,b]$ for general Haar shifts, assuming the balanced condition on the underlying measure. Using a decomposition of general Haar shifts into simpler ones |that was already used in \cite{CAPW} and that we record in Section \ref{sec1}| we obtain results analogous to the upper estimate in Theorem \ref{th:theoremA} and Theorem \ref{th:theoremC} for Haar shifts in the general case. 

In Section \ref{sec0}, we prove the connection between sibling balanced measures and the boundedness of $\Hd$. In Section \ref{sec1}, we prove Theorem \ref{th:theoremA} and in Section \ref{secweights}, we explore the connections between the new weight classes and characterizations of $\BMO$.  We end the paper with an appendix which gives a sparse domination result for $\mathcal{H}$, similar to that of \cite{CAPW}, but with the balanced condition on the measure replaced by the weaker sibling balanced condition. This
result implies both the unweighted $L^p(\mu)$ and weak-type bounds and the weighted bounds for $\mathcal{H}$ with the appropriate class of weights, $A_p^{sib}(\mu)$. 

\subsection*{Acknowledgement} We would like to thank Sergei Treil for many valuable discussions which contributed to this paper, in particular our understanding of the counterexample that completes the proof of Theorem \ref{th:theoremA}.

\section{Characterizing the boundedness of the dyadic Hilbert transform }\label{sec0}

We first turn our attention to the relationship between the balanced and sibling balanced conditions. Let $\mu$ be a Borel measure on $\R$. Here and thereafter, for any sibling balanced measure $\mu$, we assume for simplicity that $0<\mu(I)<\infty$ for each $I \in \D$ and 
$$
\mu[0,\infty)=\infty=\mu[-\infty,0).
$$ The condition that $\mu$ is sibling balanced is weaker than having $\mu$ balanced since if $\mu$ is balanced then
$m(I)\sim m(\hat{I})\sim m(I^s)$ for all $I\in \D$. 
However, if $\mu$ is sibling balanced, then automatically $\mu$ is m-increasing. Recall that a Borel measure $\mu$ is m-increasing with respect to the dyadic grid $\D$ if $m(I) \lesssim m(\hat{I})$ for all $ I \in \D.$ The m-increasing condition characterizes the weak $(1,1)$ bound for the different dyadic analog of the Hilbert transform considered in \cite{LSMP}. However, the reverse implication fails. Moreover, there exists a sibling balanced measure $\mu$ which is not balanced. We summarize these observations in the following proposition:

\begin{prop}\label{sibbalanceincreas}
Let $\mu$ be sibling balanced. Then $\mu$ is m-increasing with $m(I)\lesssim [\mu]_{sib} m(\hat{I})$ for all $I\in \D$. On the other hand, there exists an m-increasing measure $\mu$ that is not sibling balanced. Moreover, there exists a sibling balanced measure that is not balanced.
\begin{proof}
For the first part, fix $I \in \D$ and observe that
\begin{align*}
m(\hat{I}) & \sim \min\{ \mu(I), \mu(I^s))\}\\
&  \geq\min \{ m(I), m(I^s) \}\\
& \geq \frac{1}{[\mu]_{sib}}\, m(I),
\end{align*}
where we used the fact that $\mu$ is sibling balanced in the last step. 

To construct a measure on $[0,1]$ that is m-increasing but not sibling balanced, we use the second example given in Section 4 of \cite{LSMP} and leave the verification to the reader. 



To exhibit a measure which is sibling balanced but not balanced, we construct a measure which is a sum of an absolutely continuous measure and point masses. Define the following measure on $\mathbb{R}$:
\begin{equation}\label{sibnotbalanced}
    d \mu(x)= \left(\sum_{k \geq 0} 2^{-k} (\delta_{2^k}(x) + \delta_{\frac{3}{2} \cdot 2^{k}}(x))\right)+ w(x) \, dx 
\end{equation}
where $\delta_a$ denotes the Dirac delta mass at $x=a$ and $$ w(x)= \begin{cases}
1; & - \infty<x<1\\
2^{-3k}; & 2^k<x<2^{k+1}.
\end{cases}$$

Then one can check $\mu$ is sibling balanced, but it is not balanced because $m(I_k^s) \sim 2^{-k}$ while the children of $I^s_k$ satisfy $m(I_{k,1}^s) \sim 2^{-2k}. $ Notice that $\mu$ is m-increasing, as it must be. 
    
\end{proof}

\end{prop}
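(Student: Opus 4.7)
The plan is to address the three claims of the proposition in turn.

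For the first implication — sibling balanced $\Rightarrow$ m-increasing with the stated constant — I would fix $I \in \D$ and exploit the fact that the children of $\hat{I}$ are exactly $I$ and its sibling $I^s$. Consequently $\mu(\hat{I}) = \mu(I)+\mu(I^s)$, so
$$
m(\hat{I}) \;=\; \frac{\mu(I)\,\mu(I^s)}{\mu(I)+\mu(I^s)} \;\sim\; \min\{\mu(I),\mu(I^s)\}
$$
up to a factor of $2$. From the trivial inequality $m(K) = \mu(K_-)\mu(K_+)/\mu(K) \leq \mu(K)$ applied to $K \in \{I,I^s\}$, the minimum above is bounded below by $\min\{m(I),m(I^s)\}$. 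Sibling balance then yields $m(I^s) \geq m(I)/\musib$, and the desired inequality $m(\hat{I}) \gtrsim m(I)/\musib$ follows.

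For the second assertion — that m-increasing does not imply sibling balanced — I would import the second construction from Section 4 of \cite{LSMP}, as indicated in the statement. The verification amounts to revisiting that measure and checking two things: (i) that consecutive generations satisfy $m(I) \lesssim m(\hat{I})$ by direct computation of $m$ on each generation, and (ii) that there exists a sequence of sibling pairs along which $m(I)/m(I^s)$ is unbounded, ruling out the sibling balance property.

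For the third assertion, I would work with the explicit measure \eqref{sibnotbalanced}. The plan is to stratify $\D$ by generation and by location relative to the atomic skeleton: at each scale, the relevant dyadic intervals are either $I_k = [2^k, 2^{k+1}]$, its sibling $I_k^s = [-2^{k+1},-2^k]$ (or the symmetric analog containing no atoms), and the descendants of $I_k$ that isolate the atoms $\delta_{2^k}$ and $\delta_{3\cdot 2^{k-1}}$. I would compute $\mu(I)$ on each of these classes, deriving $m(I)$, and then verify that all ratios $m(I)/m(I^s)$ remain bounded, which yields sibling balance. To show failure of balance, I would identify the specific pair where the atomic part dominates at one level while the absolutely continuous part dominates one level down: concretely, $m(I_k^s) \sim 2^{-k}$ while for a child $I_{k,1}^s$ of $I_k^s$ one gets $m(I_{k,1}^s) \sim 2^{-2k}$. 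The m-increasing property of this measure, guaranteed by the first part of the proposition, is then automatic.

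The main obstacle is not conceptual but organizational: carefully enumerating which dyadic intervals carry atoms, and keeping track of how $\mu$ splits between children across the transition region $[2^k,2^{k+1}]$, so that the comparisons $m(I)\sim m(I^s)$ and the strict mismatch $m(I_k^s)/m(I_{k,1}^s) \to \infty$ are both clean. Once the bookkeeping is in place, the three claims follow from the direct calculations outlined above.
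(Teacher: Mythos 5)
Your argument for the first implication is the paper's argument, with slightly more detail spelled out: $m(\hat{I}) = \mu(I)\mu(I^s)/(\mu(I)+\mu(I^s)) \sim \min\{\mu(I),\mu(I^s)\}$, then $\min\{\mu(I),\mu(I^s)\} \geq \min\{m(I),m(I^s)\}$ since $m(K)\leq\mu(K)$ always, and sibling balance gives $\min\{m(I),m(I^s)\} \geq m(I)/[\mu]_{sib}$. This is correct. The second assertion is handled identically in both (defer to the example from Section 4 of \cite{LSMP}).

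For the third assertion there is a concrete error in your setup. You take $I_k = [2^k, 2^{k+1})$ and declare its dyadic sibling to be $I_k^s = [-2^{k+1}, -2^k)$, appealing to a symmetric structure. But in the standard grid the parent of $[2^k, 2^{k+1})$ is $[0,2^{k+1})$, so its sibling is $[0,2^k)$; the interval $[-2^{k+1}, -2^k)$ lives under a different parent entirely. And the measure has no reflective symmetry: $w\equiv 1$ on all of $(-\infty,1)$ while the atoms and the $2^{-3k}$ density live on $[1,\infty)$. Indeed, with your labeling you would compute $m([-2^{k+1}, -2^k))\sim 2^{k}$ (it is just Lebesgue measure there), which contradicts the $m(I_k^s)\sim 2^{-k}$ that you then, correctly, assert. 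The consistent reading is that $I_k^s = [2^k, 2^{k+1})$ is the interval carrying the two atoms $\delta_{2^k},\,\delta_{\frac{3}{2}\cdot 2^k}$ and the density $2^{-3k}$, so that each child of $I_k^s$ has mass $\sim 2^{-k}$ and $m(I_k^s)\sim 2^{-k}$, whereas the child $I_{k,1}^s=[2^k,\frac{3}{2}\cdot 2^k)$ has a right half carrying only absolutely continuous mass $\sim 2^{-2k}$, driving $m(I_{k,1}^s)\sim 2^{-2k}$. You identified the bookkeeping as the main obstacle, and this is exactly where it bites: as written, the labels would send the verification off track even though the target asymptotics you quote are the right ones.
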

 We have the following characterization of $L^p$ and weak-type $(1,1)$ estimates for $\mathcal H.$

\begin{prop}\label{weak11dyadicH} The following are equivalent.
\begin{enumerate}
    \item $\Hd$ is bounded in $L^{p}(\mu)\rightarrow L^{p}(\mu)$ for all $1<p<\infty$;
    \item $\Hd$ is bounded in $L^{p}(\mu)\rightarrow L^{p}(\mu)$ for some $p\neq 2$;
    \item $\mu$ is sibling balanced;
    \item $\Hd$ is weak type $(1,1)$.
\end{enumerate}
\end{prop}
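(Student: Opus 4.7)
I would establish the four-way equivalence through the cycle $(3) \Rightarrow (1) \Rightarrow (2) \Rightarrow (3)$, together with the side implications $(3) \Rightarrow (4) \Rightarrow (2)$.

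\emph{Sufficient direction: $(3) \Rightarrow (1), (4)$.} The plan is to invoke the sparse domination theorem for $\Hd$ established in the appendix, valid whenever $\mu$ is sibling balanced. Once $\Hd f$ is pointwise controlled by a positive sparse form adapted to $\mu$, strong $L^p(\mu)$ bounds for every $1 < p < \infty$ and the weak-type $(1,1)$ bound are standard consequences. The implication $(1) \Rightarrow (2)$ is immediate, and for $(4) \Rightarrow (2)$ I would simply interpolate (Marcinkiewicz) between the weak-type $(1,1)$ bound and the unconditional $L^2(\mu)$-boundedness of $\Hd$ (which follows from orthonormality of the Haar system), obtaining $L^p(\mu)$-boundedness for all $1<p<2$.

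\emph{Necessary direction: $(2) \Rightarrow (3)$.} This is where the actual work lies. Suppose $\Hd$ is bounded on $L^{p_0}(\mu)$ for some $p_0 \neq 2$. Because $\Hd(h_I) = \pm h_{I^s}$ for every $I \in \D$, testing the operator on the single Haar function $h_I$ yields
\[
\|h_{I^s}\|_{L^{p_0}(\mu)} \leq \|\Hd\|_{L^{p_0}\to L^{p_0}}\,\|h_I\|_{L^{p_0}(\mu)},\qquad I \in \D,
\]
and swapping $I \leftrightarrow I^s$ gives the reverse inequality. A direct computation of the Haar norm gives
\[
\|h_I\|_{L^p(\mu)}^p = m(I)^{p/2}\left(\mu(I_-)^{-(p-1)}+\mu(I_+)^{-(p-1)}\right),
\]
and the identity $m(I) = \mu(I_-)\mu(I_+)/\mu(I)$, together with $\mu(I)=\mu(I_-)+\mu(I_+)$, forces $m(I) \sim \min(\mu(I_-),\mu(I_+))$ with absolute constants (the larger child carries at least half the mass of $I$). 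Consequently $\|h_I\|_{L^p(\mu)}^p \sim m(I)^{1-p/2}$, and the two-sided comparison collapses to $m(I)^{1-p_0/2} \sim m(I^s)^{1-p_0/2}$ uniformly in $I$. Since $p_0 \neq 2$, the exponent $1-p_0/2$ is nonzero, so $m(I) \sim m(I^s)$ uniformly; that is, $\mu$ is sibling balanced.

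\emph{Main obstacle.} The principal technical hurdle is the sparse domination result (deferred to the appendix) that drives $(3) \Rightarrow (1), (4)$; extending the argument of \cite{CAPW} from balanced to the strictly larger class of sibling balanced measures is the core new input. The remaining implications, once one has identified the correct test function $h_I$, reduce to an elementary algebraic computation whose key feature is the sharp dichotomy of the exponent $1-p/2$: it vanishes only at $p=2$, which is precisely the value at which the sibling balanced condition is \emph{not} needed to bound $\Hd$.
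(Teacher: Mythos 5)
Your argument for $(2) \Rightarrow (3)$ is correct and is essentially what the paper means by ``testing on Haar functions and using standard estimates'': the computation $\|h_I\|_{L^p(\mu)}^p \sim_p m(I)^{1-p/2}$ (using $m(I)\sim\min(\mu(I_-),\mu(I_+))$), combined with $\Hd h_I = \pm h_{I^s}$, forces $m(I)\sim m(I^s)$ uniformly as soon as $p_0\neq 2$, and you correctly flag the vanishing of the exponent $1-p_0/2$ at $p_0=2$ as the reason the condition is invisible at $p=2$. Your treatment of $(1)\Rightarrow(2)$ and $(4)\Rightarrow(2)$ (Marcinkiewicz on $[1,2]$, then duality/antisymmetry for $p>2$) also agrees with the paper, which phrases it as $(4)\Rightarrow(1)$ via the $L^2$-isometry, anti self-adjointness, interpolation and duality.

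The gap is in $(3)\Rightarrow(1)$ and especially $(3)\Rightarrow(4)$, where you invoke the sparse domination of the appendix. Two things go wrong. First, Theorem~\ref{sparseresultforH} is proved only for $\mu$ sibling balanced \emph{and atomless}, whereas Proposition~\ref{weak11dyadicH} assumes nothing beyond sibling balancedness (plus positivity and infinite mass); so the sparse bound cannot be the engine driving a proposition that covers measures with atoms. Second, even in the atomless case the result is a \emph{bilinear} sparse estimate
\[
|\langle \Hd f_1, f_2\rangle|\lesssim [\mu]_{sib}^{1/2}\,\mathcal{A}_{\mathcal{S}}(f_1,f_2)+\sum_{i=1}^{3}\mathcal{E}^{\mathcal{S}}_i(f_1,f_2),
\]
not a pointwise domination of $\Hd f$ by a positive sparse operator. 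Extracting weak-type $(1,1)$ from a bilinear sparse bound is delicate even for the standard form $\mathcal{A}_{\mathcal{S}}$, and the non-standard companion forms $\mathcal{E}^{\mathcal{S}}_i$ couple sparse intervals that are siblings one generation up |these do not fall under the known ``sparse $\Rightarrow$ weak $(1,1)$'' machinery, and the paper never claims they do. The paper obtains $(3)\Rightarrow(4)$ directly by running the Calder\'on--Zygmund argument of \cite[Theorem~2.11]{LSMP} (which can be streamlined using the CZ decomposition of \cite{CAPW}); that route requires no atomless assumption and makes no detour through sparse forms. Once $(4)$ is in hand, $(4)\Rightarrow(1)$ gives $(3)\Rightarrow(1)$ as well, so fixing this single implication repairs your whole cycle. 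In short: keep your $(2)\Rightarrow(3)$ and the interpolation steps, but replace the sparse-domination route to the weak-type endpoint with a direct CZ argument.
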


\begin{proof}[Proof of Proposition \ref{weak11dyadicH}]
The implication $(1) \Rightarrow (2)$ is trivial, while the implication $(4) \Rightarrow (1)$ follows from the fact that $\Hd$ is an isometry on $L^2$ and anti self-adjoint, interpolation, and duality. The implication $(2) \Rightarrow (3)$ can be seen directly from testing on Haar functions and using standard estimates. 

 The most difficult implication, $(3) \Rightarrow (4)$, can be obtained by following the proof in \cite[Theorem 2.11]{LSMP}. We remark that one can streamline the argument by making use of the Calder\'{o}n-Zygmund decomposition provided in \cite{CAPW}. Again, we leave the details to the interested reader.

\end{proof}



\section{Commutators for sibling balanced measures} \label{sec1}

\subsection{Martingale \texorpdfstring{$\BMO$}{Lg}} We fix a dyadic grid $\D$ that we may assume to be the standard one, that is, 
$$
\D=\{2^{-u}[m,m+1)\colon m\in \Z,u\in \Z\}.
$$
Given $I\in \D$ and $u \in \N$, $I^{(u)}$ denotes the interval in $\D$ that contains $I$ with length $2^u\ell(I)$, and $\D_u(I)=\{J\in \D\colon J^{(u)}=I\}$.

We denote the standard martingale $\BMO$ space associated to the filtration generated by $\D$, namely $\{\sigma(\D_k)\}_{k\in\Z}$, by $\BMO(\mu)$. Its norm is given by 
$$
\|b\|_{\BMO(\mu)}=\sup_{I\in \D}\frac{1}{\mu(I)}\int_{I} |b-\avbIhat|d\mu.
$$
This is the natural definition of $\BMO$ when the underlying martingale filtration is not regular, because it is the one that yields interpolation with the $L^p$-scale. Also, whenever $b\in \BMO(\mu)$ one has $|\avbI-\avbIhat|\leq \|b\|_{\BMO(\mu)}$ |something trivial in the regular case|, so that 
$$
|\avbIplus-\avbIminus|\leq 2\|b\|_{\BMO(\mu)}.
$$
John-Nirenberg inequality holds for $\BMO(\mu)$ (see \cite{Garsia}), taking the following form: there exist constants $C, \delta_0$ so that for all $b \in \BMO(\mu)$ with norm $1$ and $I \in \D$

\begin{equation}
\mu(\{x \in I: |b(x) - \langle b \rangle_{\widehat{I}}|>\alpha \}) \leq C e^{-\delta_0 \alpha} \mu(I),
\label{JNMart1}
\end{equation}
and also 

\begin{equation}
\mu(\{x \in I: |b(x) - \langle b \rangle_{I}|>\alpha \}) \leq C e^{-\delta_0 \alpha} \mu(I).
\label{JNMart2}
\end{equation}
\eqref{JNMart2} can be easily deduced from \eqref{JNMart1} by replacing $\widehat{I}$ by $I$ and $I$ by $I_{+}$ and $I_{-}$, respectively, giving us two inequalities which we can then sum. The John-Nirenberg inequality implies 
\begin{equation}\label{bmodef}
\begin{split}
\|b\|_{\BMO(\mu)} & \sim \sup_{I \in \D}\frac{1}{\mu(I)}\int_{I} |b-\langle b\rangle_I| d\mu + \sup_{I \in \D} |\avbIplus-\avbIminus|\\
& \sim \sup_{I \in \D} \left(\frac{1}{\mu(I)}\int_{I} |b-\langle b\rangle_I|^p d\mu\right)^{1/p} + \sup_{I \in \D} |\avbIplus-\avbIminus|\\
& =: K_b^p + D_b.
\end{split}
\end{equation} 

\subsection{Upper bounds} In the remainder of the paper, and unless otherwise stated, we assume that $\mu$ is a sibling balanced measure. Let $b \in L^1_{\mathrm{loc}}(\mu)$. Writing $b=\sum_{I\in \D}\langle b,h_I\rangle h_I$ and $f=\sum_{J\in \D}\langle f,h_I\rangle h_{J}$, we can decompose 
\begin{equation}
\begin{split}
    b(x)f(x)=\sum_{I,J\in \D} \langle b,h_I \rangle \langle f,h_J\rangle h_I(x) h_J(x)= \text{I}+ \text{II}+\text{III}
\end{split}
\end{equation}
where 
\begin{equation}\label{paraproductsdef}
    \begin{split}
\text{I}=&\sum_{I}\bhaarI\fhaarI h_I^2(x)=:\Delta_bf(x),\\
        \text{II}=& \sum_{I} \bhaarI h_I(x) (\sum_{J\colon J\supsetneq I}\fhaarJ h_J(x))=\sum_{I} \bhaarI \avfI h_I(x)=:\pi_{b}f(x),\\
        \text{III}=&\sum_{J} \fhaarJ h_J(x) (\sum_{I\colon I\supsetneq J}\bhaarI h_I(x))=\sum_{I} \fhaarI \avbI h_I(x)=\pi_{f}b(x)=:\Lambda_{b}^0f(x).
    \end{split}
\end{equation}

The paraproduct decomposition above appears in \cite{Treil13}. The operator that we denote $\Delta_b$ plays the role of the operator $\pi_b^{(*)}$ in their work. Using the decomposition $bf=\Delta_bf+\pi_bf+\Lambda_{b}^{0}f$, it follows that
\begin{equation} \label{paraproductdecomp}
\begin{split}
   [\Hd,b](f):=&\Hd(bf)-b(\Hd f)\\  
   =&[\Hd,\pi_b](f)+[\Hd,\Delta_b](f)+[\Hd,\Lambda_b^{0}](f).
\end{split}
\end{equation}

We will refer to term $R_b(f)=[\Hd,\Lambda_b^{0}](f)$ as the remainder term. The splitting above allows us to prove the upper estimate in Theorem \ref{th:theoremA}. 

\begin{proof}[Proof of Theorem \ref{th:theoremA}, upper estimate]
We start by controlling the remainder term $R_bf=[\Hd,\Lambda_{b}^{0}]$. To that end, we write 
\begin{equation*}
    \begin{split}
        \Hd(\Lambda_b^0 f)=&\Hd\left(\sum_{I\in \D} \fhaarI \avbI h_I\right)=\sum_{I\in \D}\fhaarI\avbI \text{sign}(I)h_{I^s},\\
        \Lambda_{b}^{0}(\Hd f)=&\sum_{I\in \D} \langle \Hd f,h_I\rangle\avbI h_I=\sum_{I\in \D} \fhaarIsib \text{sign}(I^s)\avbI h_I=\sum_{I\in \D} \fhaarI \avbIsib \text{sign}(I)h_{I^{s}}.
    \end{split}
\end{equation*}
Therefore,
\begin{equation}\label{expressionforRb}
\begin{split}
     R_bf=\Hd(\Lambda_b^0 f)-\Lambda_{b}^{0}(\Hd f)=\sum_{I\in \D} \alpha_I\fhaarI h_{I^s}
\end{split}
\end{equation}
for a sequence $\{\alpha_I=\text{sign}(I)(\avbI-\avbIsib)\}$ with $\sup_{I}|\alpha_I|\leq 2\|b\|_{BMO(\mu)}$, so $R_b$ is a Haar shift. Therefore, the results in \cite{LSMP} in the balanced case, and those in Appendix \ref{sec:app} in the |more general| sibling balanced one, imply that $R_b$ is of weak-type $(1,1)$ and bounded on $L^p(\mu)$, and moreover
$$
\|[\Hd,\Lambda_{b}^{0}]\|_{L^{p}(\mu)\rightarrow L^{p}(\mu)}\lesssim_{p,[\mu]_{sib}} \|b\|_{BMO(\mu)}.
$$
To control the other two pieces from \eqref{paraproductdecomp}, we apply \cite[Theorem 4.8]{Treil13}, which in particular implies  
$$\|\Delta_b\|_{L^{p}(\mu)\rightarrow L^{p}(\mu)}\sim \|b\|_{\BMO(\mu)}$$
and 
$$\|\pi_b\|_{L^{p}(\mu)\rightarrow L^{p}(\mu)}\lesssim \|b\|_{\BMO(\mu)}.$$
Combining this with the $L^p(\mu)$-boundedness of $\Hd$ finishes the proof. 
\end{proof}


\subsection{Failure of lower bounds} \label{subsec:13} The fact that $\Hd^2=-I$ and the $L^p$-boundedness of $\Hd$ imply
\begin{equation}\label{ScomparableLp}
\|\Hd f\|_{L^p(\mu)} \sim \|f\|_{L^p(\mu)}, \; 1<p<\infty.
\end{equation}
This is enough to conclude the lower estimate for $[\Hd, b]$ in the doubling setting. In our setting, one has the following:

\begin{prop}\label{lowerboundforclassicalbmo}
For $1<p<\infty$, 
$$
\sup_{I\in \D} \frac{1}{\mu(I)}\int_{I} |b-\langle b\rangle_I|^p d\mu \lesssim_{p,[\mu]_{sib}} \|[\Hd,b]\|_{L^p(\mu)\rightarrow L^p(\mu)}^p.
$$
\end{prop}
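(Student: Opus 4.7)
The proof is a dyadic adaptation of the classical Coifman--Rochberg--Weiss duality argument, relying on two structural facts about $\Hd$. First, the subspace $V_I:=\overline{\operatorname{span}}\{h_K:K\in\D,\ K\subsetneq I\}$ is $\Hd$-invariant: indeed $K\subsetneq I$ forces $K^s\subsetneq I$ (both live inside the same proper child of some ancestor of $I$), and $\Hd h_K=\pm h_{K^s}$. Since $\Hd|_{V_I}^2=-\mathrm{Id}$, every $g_1\in V_I$ has a unique $L^p$-comparable $\Hd$-preimage $f_1:=-\Hd g_1\in V_I$, supported in $I$. Second, $\mathbbm 1_I$ has Haar coefficients only at the dyadic ancestors $I^{(k)}$, so $\Hd\mathbbm 1_I=\sum_{k\ge1}a_k h_{(I^{(k)})^s}$ is supported in $\mathbb R\setminus I$.

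Fix $I\in\D$ and $1<p<\infty$. By $L^p$-$L^{p'}$ duality applied to $(b-\langle b\rangle_I)\mathbbm 1_I$, it is enough to show that for every mean-zero $g$ supported in $I$ with $\|g\|_{p'}\le 1$,
\[
\Big|\int bg\,d\mu\Big|\lesssim\|[\Hd,b]\|_{L^p\to L^p}\,\mu(I)^{1/p}.
\]
Decompose $g=g_1+c_Ih_I$ with $g_1\in V_I$ and $c_I=\langle g,h_I\rangle$, and treat the two pieces separately.

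For the $V_I$-piece, the identity $bg_1=\Hd(bf_1)-[\Hd,b]f_1$ combined with the antisymmetry of $\Hd$ gives
\[
\int_I\Hd(bf_1)\,d\mu=-\int bf_1\,\Hd\mathbbm 1_I\,d\mu=0,
\]
since $bf_1$ is supported in $I$ while $\Hd\mathbbm 1_I$ is supported in $\mathbb R\setminus I$. Hence $\int bg_1\,d\mu=-\langle [\Hd,b]f_1,\mathbbm 1_I\rangle$, which is bounded by $\|[\Hd,b]\|_{L^p\to L^p}\|f_1\|_p\mu(I)^{1/p'}$; the $L^p$-boundedness of $\Hd$ from Proposition \ref{weak11dyadicH} then controls $\|f_1\|_p$ in terms of $\|g_1\|_p$. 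For the $c_Ih_I$-piece, testing $[\Hd,b]$ on $h_{I^s}$ and pairing against $\mathbbm 1_I$: a direct tracking shows that among the Haar coefficients of $bh_{I^s}$, only the one at $K=I^s$ has image under $\Hd$ that lives on $I$, and combined with $\int_I h_I\,d\mu=0$ this yields
\[
\int_I[\Hd,b]h_{I^s}\,d\mu=\pm\langle b,h_I\rangle,
\]
so that $|\langle b,h_I\rangle|\le\|[\Hd,b]\|_{L^p\to L^p}\|h_{I^s}\|_p\mu(I)^{1/p'}$. Summing the two estimates, with H\"older's inequality giving $|c_I|\le\|g\|_{p'}\|h_I\|_p$ and the sibling-balanced condition $m(I)\sim m(I^s)$ controlling the relative sizes of $\|h_I\|_p$ and $\|h_{I^s}\|_p$, yields the stated bound.

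The main technical obstacle is the $L^p$-$L^{p'}$ bookkeeping for $p\ne 2$: in the doubling case $\|h_I\|_p\|h_I\|_{p'}\sim 1$ automatically, but in the sibling-balanced nondoubling setting this product is not uniformly bounded, so one must use $\musib<\infty$ quantitatively --- together with the John--Nirenberg equivalence $K_b^p\sim K_b^q$ built into \eqref{bmodef} --- to argue that the two pieces of the decomposition can be controlled with matching scaling $\mu(I)^{1/p}$. For $p=2$ the entire argument collapses cleanly because $\|h_I\|_2=\|h_{I^s}\|_2=1$ and the decomposition $g=g_1+c_Ih_I$ is $L^2$-orthogonal, giving $|c_I|,\|g_1\|_2\le\|g\|_2\le 1$ for free.
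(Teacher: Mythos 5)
Your approach (duality plus the $V_I$ versus $h_I$ splitting, followed by antisymmetry) is genuinely different from the paper's, which simply tests $[\Hd,b]$ on $\mathbbm{1}_I$, restricts to $\hat I$ where $\Hd\mathbbm{1}_I\equiv 0$, and invokes \eqref{ScomparableLp} at once. Your argument is correct for $1<p\le 2$, but it contains a real gap for $p>2$, and the fix you suggest does not work in this setting.

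Concretely: you bound $\bigl|\langle[\Hd,b]f_1,\mathbbm{1}_I\rangle\bigr|\le\|[\Hd,b]\|\,\|f_1\|_p\,\mu(I)^{1/p'}$, which closes only if $\|f_1\|_p\lesssim\mu(I)^{1/p-1/p'}$; from $\|g\|_{p'}\le 1$ one has $\|g_1\|_{p'}\lesssim 1$ and hence (H\"older on $I$) $\|g_1\|_p\le\|g_1\|_{p'}\mu(I)^{1/p-1/p'}$ only when $p\le p'$, i.e.\ $p\le 2$. For $p>2$ the H\"older inequality reverses and the bound fails. The $c_Ih_I$ piece has the identical mismatch: it reduces to $\|h_I\|_p\|h_{I^s}\|_p\lesssim\mu(I)^{2/p-1}$, and since $\|h_I\|_p\sim m(I)^{1/p-1/2}$ and $m(I)\le\mu(I)$, the quantity $\|h_I\|_p\|h_{I^s}\|_p\sim m(I)^{2/p-1}$ is $\lesssim\mu(I)^{2/p-1}$ only for $p\le 2$. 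Your proposed remedy, using a John--Nirenberg equivalence $K_b^p\sim K_b^q$, is not available here: in the non-doubling case, \eqref{bmodef} only gives $K_b^p+D_b\sim K_b^q+D_b$, and the term $D_b$ is precisely what this proposition does \emph{not} claim to control (indeed by the third item of Theorem~\ref{th:theoremA}, $D_b$ cannot be bounded by $\|[\Hd,b]\|$). Without $D_b$, $K_b^1\not\sim K_b^p$ in general for non-doubling $\mu$ (take $b=\mathbbm{1}_{I_-}/\mu(I_-)$ with $\mu(I_-)$ tiny). The clean repair of your scheme is to use that $[\Hd,b]$ is self-adjoint (since $\Hd^*=-\Hd$) and pair the other way, $|\langle f_1,[\Hd,b]\mathbbm{1}_I\rangle|\le\|f_1\|_{p'}\|[\Hd,b]\mathbbm{1}_I\|_p\lesssim\|g_1\|_{p'}\|[\Hd,b]\|\mu(I)^{1/p}$, and similarly for the $h_I$ piece — but at that point you are just testing $[\Hd,b]$ on $\mathbbm{1}_I$, which is exactly the paper's much shorter proof. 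As a side remark, your worry that $\|h_I\|_p\|h_I\|_{p'}$ could be unbounded in the non-doubling case is unfounded: $\|h_I\|_p\sim m(I)^{1/p-1/2}$ gives $\|h_I\|_p\|h_I\|_{p'}\sim 1$ for every $\mu$; the genuine exponent issue is the one described above.
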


\begin{proof}
It is enough to check that for all $I\in \D$, one has
$$
\int_{I} |b-\langle b\rangle_I|^pd\mu\lesssim \|[\Hd,b] \one_{I} \|_{L^p(\mu)}^p.
$$
Clearly,
\begin{equation*}
    \|[\Hd,b]\one_{I}\|_{L^p(\mu)}^p\geq \int_{\hat{I}} |\Hd(b\one_I)-b\Hd(\one_I)|^pd\mu.
\end{equation*}
Now $\one_{I}=\sum_{J\colon I\subsetneq J} \langle \one_{I},h_J\rangle h_J$, and for all $J$ with $I\subsetneq J$ one has $\Hd(h_J)=0$ in $\hat{I}$ so $\Hd(\one_I)=0$ in $\hat{I}$. Therefore, the integral on the right hand side reduces to
$$\int_{\hat{I}} |\Hd(b\one_I)|^pd\mu=\int |\Hd(\sum_{J\subseteq I}\langle b,h_J \rangle h_J)|^p d\mu\\ \gtrsim_{p,[\mu]_{sib}} \int |\sum_{J\subseteq I}\langle b,h_J \rangle h_J|^p d\mu
$$
by \eqref{ScomparableLp}, and so 
$$\|[\Hd,b]\one_I\|_{L^p(\mu)}^p \gtrsim_{p,[\mu]_{sib}} \int |\sum_{J\subseteq I}\langle b,h_J \rangle h_J|^p d\mu=\int_{I} |b-\langle b\rangle_I |^pd\mu.$$
\end{proof}


\begin{proof}[Proof of Theorem \ref{th:theoremA}, lower estimate] In view of Proposition \ref{lowerboundforclassicalbmo}, we only need to show
$$
\sup_{I\in \D} |\avbIplus-\avbIminus| \lesssim_{p,[\mu]_{sib}} \sup_{k \in \Z} \|[\Hd,\cexp_k b]\|_{L^p(\mu)\rightarrow L^p(\mu)}.
$$
Fix $I \in \D$, and suppose $I \in \D_k$. We test $[\Hd, \cexp_{k+1}b]$  on $h_{I_{-}}$:

\begin{align*}
\|[\Hd,\cexp_{k+1} b]\|_{L^p(\mu)\rightarrow L^p(\mu)} \|h_{I_{-}}\|_{L^p(\mu)} & \gtrsim \|\Hd(\cexp_{k+1}b h_{I_{-}})- \cexp_{k+1} b \Hd(h_{I_{-}})\|_{L^p(\mu)}\\
& = \|(\avbIminus- \avbIplus) h_{I_{+}}\|_{L^p(\mu)}\\
& = |\avbIminus- \avbIplus| \cdot \|\Hd(h_{I_{-}})\|_{L^p(\mu)} \\
& \gtrsim |\avbIminus- \avbIplus| \cdot \|h_{I_{-}}\|_{L^p(\mu)},
\end{align*}
by \eqref{ScomparableLp}. 
\end{proof}

\begin{rem}
As an immediate consequence of \eqref{expressionforRb}, we immediately get the alternative lower bound 
$$
\|b\|_{\BMO(\mu)}\lesssim_{p,[\mu]_{sib}} \|[\Hd,b]\|_{L^{p}(\mu)\rightarrow L^{p}(\mu)}+\|[\Hd,\Lambda_{b}^{0}]\|_{L^{p}(\mu)\rightarrow L^{p}(\mu)}.
$$
\end{rem}

To complete the proof of Theorem \ref{th:theoremA}, we will show that an inequality like 
\begin{equation}\label{faillowerbound}
    \|b\|_{\BMO(\mu)}\leq C \|[\Hd,b]\|_{L^{2}(\mu)\rightarrow L^2(\mu)}
\end{equation}
cannot hold for all sibling balanced measures $\mu$ with $[\mu]_{sib}\sim 1$ and all $b\in BMO(\mu)$. For each $k>0$, define the measure
    \begin{equation}\label{defmeasure}
    d\mu_k(x)=\sum_{\ell\in \Z}\delta_\ell(x)
    + 2^{-k}\one_{\R\backslash \Z}(x)dx,
    \end{equation}
and put 
    \begin{equation}\label{defb}
    b_k(x)=\begin{cases}
        1,\text{ if }x\in \Z;\\
        2^k, \text{ if }x\in\R\backslash \Z.
    \end{cases}
    \end{equation}

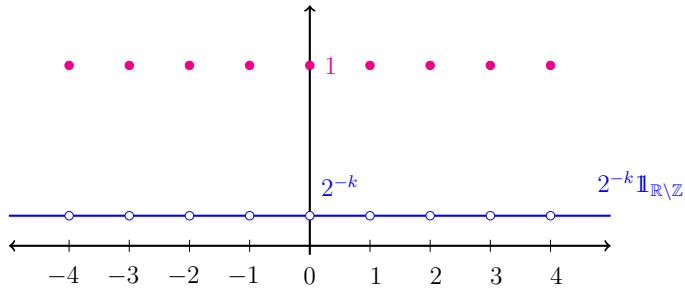
\begin{figure}[h]\label{figure2}
         \scalebox{0.8}{
\begin{tikzpicture}
\draw[<->,line width=1pt] (-5,0)--(5,0);
\draw[->,line width=1pt] (0,-0.15)--(0,4);

\draw (0,-0.5) node {$0$};

\draw[blue, line width=1pt] (-5,1/2)--(5,1/2);
\draw[blue] (5.5,1) node {$2^{-k}\one_{\R\backslash \Z}$};
\draw[blue, fill=white] (1,1/2) circle (2pt);
\draw[blue, fill=white] (2,1/2) circle (2pt);
\draw[blue, fill=white] (3,1/2) circle (2pt);
\draw[blue, fill=white] (4,1/2) circle (2pt);
\draw[blue, fill=white] (-1,1/2) circle (2pt);
\draw[blue, fill=white] (-2,1/2) circle (2pt);
\draw[blue, fill=white] (-3,1/2) circle (2pt);
\draw[blue, fill=white] (-4,1/2) circle (2pt);
\draw[blue, fill=white] (0,1/2) circle (2pt);

\draw[magenta] (0.35,3) node {$1$};
\draw[blue] (0.5,1) node {$2^{-k}$};

\filldraw[magenta] (1,3) circle (2pt);
\filldraw[magenta] (2,3) circle (2pt);
\filldraw[magenta] (3,3) circle (2pt);
\filldraw[magenta] (4,3) circle (2pt);
\filldraw[magenta] (0,3) circle (2pt);
\filldraw[magenta] (-1,3) circle (2pt);
\filldraw[magenta] (-2,3) circle (2pt);
\filldraw[magenta] (-3,3) circle (2pt);
\filldraw[magenta] (-4,3) circle (2pt);

\draw (-4,-0.1)--(-4,0.1);
\draw (-3,-0.1)--(-3,0.1);
\draw (-2,-0.1)--(-2,0.1);
\draw (-1,-0.1)--(-1,0.1);
\draw (0,-0.1)--(0,0.1);
\draw (1,-0.1)--(1,0.1);
\draw (2,-0.1)--(2,0.1);
\draw (3,-0.1)--(3,0.1);
\draw (4,-0.1)--(4,0.1);

\draw (-4.1,-0.5) node {$-4$};
\draw (-3.1,-0.5) node {$-3$};
\draw (-2.1,-0.5) node {$-2$};
\draw (-1.1,-0.5) node {$-1$};
\draw (4.1,-0.5) node {$4$};
\draw (3.1,-0.5) node {$3$};
\draw (2.1,-0.5) node {$2$};
\draw (1.1,-0.5) node {$1$};

\end{tikzpicture}}
\caption{Illustration of the measure $\mu$ for a fixed $k\in \N$. The circles represent the point masses at the integers.}

\end{figure}

\begin{figure}[h]\label{figure3}
         \scalebox{0.8}{
\begin{tikzpicture}
\draw[<->,line width=1pt] (-5,0)--(5,0);
\draw[->,line width=1pt] (0,-0.15)--(0,4.5);
\draw (0,-0.5) node {$0$};

\draw[blue, line width=1pt] (-5,4)--(5,4);
\draw[blue] (5.75,4) node {$2^{k}\one_{\R\backslash \Z}$};
\draw[blue, fill=white] (1,4) circle (2pt);
\draw[blue,fill=white] (2,4) circle (2pt);
\draw[blue,fill=white] (3,4) circle (2pt);
\draw[blue,fill=white] (4,4) circle (2pt);
\draw[blue,fill=white] (-1,4) circle (2pt);
\draw[blue,fill=white] (-2,4) circle (2pt);
\draw[blue,fill=white] (-3,4) circle (2pt);
\draw[blue,fill=white] (-4,4) circle (2pt);
\draw[blue,fill=white] (0,4) circle (2pt);

\draw[magenta] (0.35,1) node {$1$};
\draw[blue] (0.35,3.7) node {$2^{k}$};
\filldraw[magenta] (1,1) circle (2pt);
\filldraw[magenta] (2,1) circle (2pt);
\filldraw[magenta] (3,1) circle (2pt);
\filldraw[magenta] (4,1) circle (2pt);
\filldraw[magenta] (0,1) circle (2pt);
\filldraw[magenta] (-1,1) circle (2pt);
\filldraw[magenta] (-2,1) circle (2pt);
\filldraw[magenta] (-3,1) circle (2pt);
\filldraw[magenta] (-4,1) circle (2pt);

\draw (-4,-0.1)--(-4,0.1);
\draw (-3,-0.1)--(-3,0.1);
\draw (-2,-0.1)--(-2,0.1);
\draw (-1,-0.1)--(-1,0.1);
\draw (0,-0.1)--(0,0.1);
\draw (1,-0.1)--(1,0.1);
\draw (2,-0.1)--(2,0.1);
\draw (3,-0.1)--(3,0.1);
\draw (4,-0.1)--(4,0.1);

\draw (-4.1,-0.5) node {$-4$};
\draw (-3.1,-0.5) node {$-3$};
\draw (-2.1,-0.5) node {$-2$};
\draw (-1.1,-0.5) node {$-1$};
\draw (4.1,-0.5) node {$4$};
\draw (3.1,-0.5) node {$3$};
\draw (2.1,-0.5) node {$2$};
\draw (1.1,-0.5) node {$1$};

\end{tikzpicture}}
\caption{Illustration of the function $b$ for a fixed $k\in \N$.}

\end{figure}
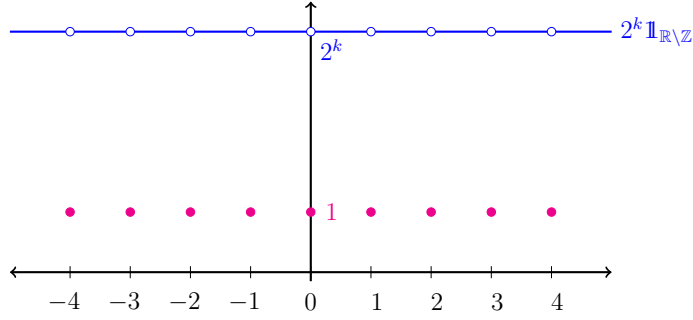

The next result summarizes the properties of the pairs $(\mu_k,b_k)$.

\begin{prop}\label{propertiesbmu} 
\begin{enumerate}
    \item \label{sibbalanced}$[\mu_k]_{sib} \sim 1$.
    \item\label{martnorm} $\sup_{I\in \D} |\langle b_k \rangle_I-\langle b_k \rangle_{I^s}|\sim 2^k$;
    \item\label{classnorm} $\sup_{I\in \D}\left( \frac{1}{\mu(I)}\int_I|b_k-\langle b_k\rangle_I |^2d\mu\right)^{1/2}\lesssim 2^{k/2}$.
\end{enumerate}
\end{prop}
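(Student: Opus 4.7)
My plan is to perform direct computations on the explicit measure $\mu_k$ and function $b_k$, classifying dyadic intervals $I \in \D$ by their length $L = \ell(I)$ and the number $n_I$ of integers contained in $I$. The crucial identities are
\[
\mu_k(I) = n_I + 2^{-k}L, \qquad \int_I b_k \, d\mu_k = n_I + L, \qquad \int_I b_k^2 \, d\mu_k = n_I + 2^k L,
\]
together with the structural observation that $L \geq 1$ forces $n_I = L$ (such an $I$ has the form $[2^j m, 2^j(m+1))$ with $j \geq 0$), while $L < 1$ forces $n_I \in \{0, 1\}$, with $n_I = 1$ precisely when the left endpoint of $I$ is an integer.

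For \emph{(1)}, I would split into cases. When $L \geq 1$, both $I$ and $I^s$ contain the same number of integers, and a symmetric computation yields $m(I) = m(I^s)$. When $L < 1$ and neither $I$ nor $I^s$ contains an integer, again $m(I) = m(I^s)$ trivially. The only substantive case is $L < 1$ with exactly one of $I, I^s$ (say $I$) containing an integer at an endpoint; setting $a = 2^{-k}L$, a short computation gives
\[
\frac{m(I)}{m(I^s)} = \frac{2(1 + a/2)}{1 + a} \in [1, 2],
\]
and likewise for the reciprocal. Hence $[\mu_k]_{\mathrm{sib}} \leq 2$, uniformly in $k$.

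For \emph{(2)}, the upper bound $\sup_I |\langle b_k\rangle_I - \langle b_k\rangle_{I^s}| \leq 2^k$ is immediate from $b_k \in [1, 2^k]$. For the lower bound, test on $I_u = [0, 2^{-u})$ with $u$ large: since $I_u$ carries the point mass at $0$, one has $\langle b_k \rangle_{I_u} = (1 + 2^{-u})/(1 + 2^{-k-u}) \to 1$ as $u \to \infty$, while $I_u^s = [2^{-u}, 2^{-u+1})$ contains no integer, so $\langle b_k \rangle_{I_u^s} = 2^k$. The difference tends to $2^k - 1 \sim 2^k$, as claimed.

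For \emph{(3)}, combining the identity $\mu_k(I)^{-1}\int_I (b_k - \langle b_k\rangle_I)^2 \, d\mu_k = \langle b_k^2\rangle_I - \langle b_k\rangle_I^2$ with the three formulas above yields, after an expansion whose numerator factors as $n_I L (2^k + 2^{-k} - 2)$,
\[
\frac{1}{\mu_k(I)}\int_I (b_k - \langle b_k\rangle_I)^2 \, d\mu_k = \frac{n_I L (2^k + 2^{-k} - 2)}{(n_I + 2^{-k}L)^2}.
\]
If $L \geq 1$ (so $n_I = L$) this equals $(2^k + 2^{-k} - 2)/(1 + 2^{-k})^2 \sim 2^k$; if $L < 1$ with $n_I = 1$, it is at most $L \cdot 2^k \leq 2^k$; and it vanishes when $n_I = 0$. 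Taking square roots gives the claim. The only genuinely delicate point is the case analysis in \emph{(1)}: one must verify uniformly in $k$ and $u$ that at very small scales, where a single point mass dominates $\mu_k(I)$ on the integer-containing side of a sibling pair while the other side is purely absolutely continuous, the $m$-ratio remains bounded. The computation above addresses this because $m(I)$ on the integer-containing side is still proportional to the small absolutely continuous contribution coming from its integer-free child.
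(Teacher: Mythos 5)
Your proof is correct. Let me compare the routes.

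For items (1) and (2), your argument mirrors the paper's: a case analysis on whether $\ell(I)\geq 1$ (translation invariance of $\mu_k$ by integers gives exact equality), whether neither sibling meets $\Z$ (the measure is locally a constant times Lebesgue), or whether exactly one sibling has an integer at its left endpoint; your closed-form ratio $(2+a)/(1+a)$ with $a=2^{-k}\ell(I)$ is the same computation the paper carries out with $m(I)\sim 2^{-k}|I|\sim m(I^s)$ for $I\in\mathcal{I}_{>0}$, and for (2) you use the same extremal configuration $[0,2^{-u})$ versus its sibling.

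For item (3), however, your method is genuinely different and, to my eye, cleaner. The paper passes through the Haar expansion: it shows $\langle b_k,h_I\rangle=0$ unless $I\in\mathcal{I}$, estimates $|\langle b_k,h_I\rangle|\sim |I|^{1/2}2^{k/2}$ for those $I$, and then verifies the Carleson-packing bound $\sum_{I\in\mathcal{I},\,I\subset I_\ell^j}|\langle b_k,h_I\rangle|^2\lesssim 2^k\mu(I_\ell^j)$ by summing a geometric series. You instead compute the local variance directly from the moment identities $\mu_k(I)=n_I+2^{-k}L$, $\int_I b_k\,d\mu_k=n_I+L$, $\int_I b_k^2\,d\mu_k=n_I+2^kL$, and the algebraic simplification
\[
\langle b_k^2\rangle_I-\langle b_k\rangle_I^2=\frac{n_IL(2^k+2^{-k}-2)}{(n_I+2^{-k}L)^2}
\]
immediately gives the bound in all three regimes ($n_I=0$; $n_I=1,L<1$; $n_I=L\geq 1$). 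This avoids the Haar machinery entirely and also makes transparent that the bound is saturated at scale $L\sim 1$. Both approaches are valid; the paper's Haar-coefficient route is more in the spirit of the rest of the section (and foreshadows the use of $\langle b_k,h_I\rangle$ in Proposition \ref{prop:finalcommutator}), while your closed-form variance is more elementary and self-contained.
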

\begin{proof} We start checking \eqref{sibbalanced}. If $I\in \D_{\leq 0}$, $\mu_k(I_{-})=\mu_k((I^s)_{-})$, $\mu_k(I_{+})=\mu_k((I^s)_{+})$ and $\mu_k(I)=\mu_k(I^{s})$, so it is immediate that $m(I)=m(I^s)$. Define 
$$
\mathcal{I}=\{I_\ell^j=[\ell,\ell+2^{-j})\colon \ell\in \Z,\, \text{ and }j\geq 0\},
$$
and 
$$
\mathcal{I}_{>0}=\{I_\ell^j=[l,\ell+2^{-j})\colon \ell\in \Z,\, \text{ and }j\geq 1\}.
$$
If $|I|<1$, then the equality is still true if neither $I$ nor $I^{s}$ belong to $\mathcal{I}_{>0}$. We are then left to check $m(I)\sim m(I^s)$ in the case $I\in \mathcal{I}_{>0}$ or $I^{s}\in \mathcal{I}_{>0} $. If $I=I^{j}_l$ then
$$
m(I)\sim \mu_k(I_{+})\sim 2^{-k}|I|\sim m(I^{s}).
$$
If $I^{s}\in \mathcal{I}_{>0}$ we just replace $I$ with $I^s$ in the computation above. Hence, $\mu_k$ is sibling balanced with constant independent of $k$. 
    
We next check \eqref{martnorm}. Again, if $I\in \D_{\leq 0}$ then $\mu_k(I)=\mu_K(I^{s})$, and so since $b$ is 1 periodic then $\langle b_k \rangle_I=\langle b_k \rangle_{I^s}$. If $|I|<1$ then $\langle b_k \rangle_I=\langle b_k \rangle_{I^s} =2^k$ unless $I$ or $I^{s}$ belong to $\mathcal{I}_{>0}$. Therefore, 
$$
\sup_{I\in \D} |\langle b_k \rangle_I-\langle b_k \rangle_{I^s}|=\sup_{I\in \mathcal{I}_{>0}}|\langle b_k \rangle_I-\langle b_k \rangle_{I^s}|.$$
If $I=I_{\ell}^{j}$, $j\geq 1$, one has $\langle b_k \rangle_{I^s}=2^k$ while
\begin{equation*}
        \langle b_k \rangle_I=\langle b_k\rangle_{[0,2^{-j})}=\frac{b_k(0)+2^k\mu(0,2^{-j})}{\mu([0,2^{-j})])}=\frac{1+2^{-j}}{1+2^{-k-j}}\sim 1
\end{equation*}
so that $|\langle b_k \rangle_I-\langle b_k \rangle_{I^s}|\sim 2^k$ for all $I\in \mathcal{I}_{>0}$, as desired.

We are left with \eqref{classnorm}. From the fact that $\langle b_k \rangle_I-\langle b_k \rangle_{I^s}=0$ if neither $I$ nor $I^s$ belong to $\mathcal{I}_{>0}$, we also know $\langle b_k,h_I\rangle=\sqrt{m(I)}(\langle b_k \rangle_{I_-}-\langle b_k \rangle_{I_+})=0$ unless $I\in \mathcal{I}$.
Therefore, it is enough to check that for all $j\geq 0$ and $\ell\in \Z$,
$$\sum_{I\in \mathcal{I}\colon I\subset I_{\ell}^{j}}|\langle b_k,h_I\rangle|^2\lesssim 2^{k} \mu(I_{\ell}^{j}).$$
For any $I\in \mathcal{I}$ one has 
\begin{equation*}
    \begin{split}
    |\langle b_k,h_I\rangle|=&\sqrt{m(I)}|\langle b_k\rangle_{I_{-}}-\langle b_k \rangle_{I_{+}}|\\
    \sim & (|I|2^{-k})^{1/2}2^k\\
    \sim & |I|^{1/2}2^{k/2}.
    \end{split}
\end{equation*}
Hence,
$$\sum_{I\in \mathcal{I}\colon I\subset I_{\ell}^{j}}|\langle b_k,h_I\rangle|^2\lesssim 2^k\sum_{I\in \mathcal{I}\colon I\subset I_{l}^{j}}|I|\lesssim 2^{k}\sum_{m\geq 0} 2^{-m-j}\lesssim 2^{k-j}\leq 2^k{\mu(I_\ell^j)}.$$
\end{proof}

To finish the proof of Theorem \ref{th:theoremA}, we prove the following estimate:

\begin{prop} \label{prop:finalcommutator}
For each $k>0$,
$$
\|[\Hd,b_k]\|_{L^{2}(\mu)\rightarrow L^{2}(\mu_k)}\lesssim 2^{\frac{k}{2}}.
$$
\end{prop}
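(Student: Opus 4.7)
The plan is to use the paraproduct decomposition \eqref{paraproductdecomp}, but with a more refined analysis than in the proof of the upper estimate, because Treil's bound forces $\|\Delta_{b_k}\|_{L^2\to L^2}\sim \|b_k\|_{\BMO(\mu)}\sim 2^k$, which is a factor $2^{k/2}$ too large. The key is to combine $[\Hd,\Delta_{b_k}]$ and $R_{b_k}$ to extract a cancellation that brings the bound down to $K_{b_k}^2\lesssim 2^{k/2}$. First, by the Carleson embedding, $\|\pi_b\|_{L^2\to L^2}\lesssim K_b^2$, so Proposition~\ref{propertiesbmu}\eqref{classnorm} and the fact that $\Hd$ is an isometry give $\|[\Hd,\pi_{b_k}]\|_{L^2\to L^2}\lesssim K_{b_k}^2\lesssim 2^{k/2}$.

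For the remaining two pieces I would use the decomposition $h_I^2=\mu(I)^{-1}\one_I+c_I h_I$ with $c_I=(\mu(I_+)-\mu(I_-))/(\mu(I)\sqrt{m(I)})$, together with the elementary identity $\langle b,h_I\rangle c_I=\langle b\rangle_{I_-}+\langle b\rangle_{I_+}-2\langle b\rangle_I$ (obtained by direct expansion). A straightforward computation then yields
\begin{equation*}
([\Hd,\Delta_{b_k}]+R_{b_k})h_I=\frac{\langle b_k,h_I\rangle}{\mu(I)}\Hd(\one_I)-\frac{\epsilon(I)\langle b_k,h_{I^s}\rangle}{\mu(I^s)}\one_{I^s}+\epsilon(I)\mathcal{B}(I)h_{I^s},
\end{equation*}
where $\mathcal{B}(I)=\langle b_k\rangle_{I_-}+\langle b_k\rangle_{I_+}-\langle b_k\rangle_{(I^s)_-}-\langle b_k\rangle_{(I^s)_+}-\langle b_k\rangle_I+\langle b_k\rangle_{I^s}$.

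The central observation is that $\sup_I|\mathcal{B}(I)|\lesssim 1$ uniformly in $k$. Although each $\langle b_k\rangle_J$ can be as large as $2^k$ (precisely when $J\subset\R\setminus\Z$ and $|J|<1$), a short case analysis based on which of $\{I_\pm,(I^s)_\pm,I,I^s\}$ contain integers shows the $2^k$-terms always cancel in pairs: this uses the integer-translation symmetry of $(\mu_k,b_k)$ and the dyadic sibling structure. Combined with Haar orthonormality, this gives
\begin{equation*}
\Bigl\|\sum_I\epsilon(I)\mathcal{B}(I)\langle f,h_I\rangle h_{I^s}\Bigr\|_{L^2(\mu_k)}^2=\sum_I|\mathcal{B}(I)|^2|\langle f,h_I\rangle|^2\lesssim \|f\|_{L^2(\mu_k)}^2.
\end{equation*}

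For the first surviving term, I would write it as $\Hd(Mf)$ with $Mf=\sum_I\langle f,h_I\rangle\langle b_k,h_I\rangle\mu(I)^{-1}\one_I$. Because $\langle b_k,h_I\rangle\neq 0$ only when $I=[m,m+2^{-j})$ for $m\in\Z$ and $j\ge 0$, and because the intervals sharing a given $m$ form a nested chain, a per-chain Cauchy--Schwarz using $|\langle b_k,h_I\rangle|/\mu(I)\lesssim 2^{(k-j-1)/2}$ and $\sum_{j\ge 0}2^{k-j-1}\lesssim 2^k$ yields $\|Mf\|_{L^2}^2\lesssim 2^k\|f\|_{L^2}^2$; since $\Hd$ is an isometry, $\|\Hd Mf\|\lesssim 2^{k/2}\|f\|$. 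The second surviving term is estimated analogously after the substitution $J=I^s$. Summing the three contributions gives $\|[\Hd,b_k]\|_{L^2(\mu_k)\to L^2(\mu_k)}\lesssim 2^{k/2}$. The main obstacle is the verification of the uniform bound $|\mathcal{B}(I)|\lesssim 1$; this is the step where the special structure of $(\mu_k,b_k)$ enters decisively, and without it one cannot beat the naive $\|b_k\|_{\BMO(\mu)}$-bound, so the remaining scale-by-scale Carleson-type estimates then fall out routinely.
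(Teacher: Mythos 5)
Your argument is correct and is essentially the paper's own proof in a different packaging: after expanding $h_I^2=\mu(I)^{-1}\one_I+c_Ih_I$, the two "surviving" terms you identify combine to exactly $[\Hd,\pi_{b_k}^*]f$ (the operator $Mf=\sum_I\langle f,h_I\rangle\langle b_k,h_I\rangle\mu(I)^{-1}\one_I$ is $\pi_{b_k}^*f$, and the $\one_{I^s}$-term is $-\pi_{b_k}^*(\Hd f)$ after reindexing), while your coefficient $\mathcal{B}(I)$ coincides with the paper's $c_I(b_k)-c_{I^s}(b_k)$, so you have rederived the decomposition $[\Hd,b_k]=[\Hd,\pi_{b_k}]+[\Hd,\pi_{b_k}^*]+[\Hd,\Lambda_{b_k}]$ from \eqref{secondsplitting} and isolated the same central cancellation. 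The only cosmetic differences are the route to this decomposition (starting from $\Delta_{b_k}$ and splitting it into $\pi_{b_k}^*+\Lambda_{b_k}^1$ rather than invoking \eqref{secondsplitting} outright) and your hands-on per-chain Carleson embedding bound $\|\pi_{b_k}^*f\|_{L^2(\mu_k)}\lesssim 2^{k/2}\|f\|_{L^2(\mu_k)}$ in place of citing Treil's Theorem 4.8.
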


Indeed, Proposition \ref{prop:finalcommutator} and \eqref{martnorm} of Proposition \ref{propertiesbmu} are enough to conclude by taking $k$ arbitrarily large. To prove Proposition \ref{prop:finalcommutator}, we are going to consider an alternative paraproduct decomposition. As in \cite{Treil13}, we start with
$$
bf=\sum_{I}\langle bf,h_I\rangle h_I=\sum_{I,J} \fhaarJ \langle bh_J,h_I\rangle h_I.
$$
Splitting the sum into the pieces $I\subsetneq J,\,J\subsetneq I$ and $I=J$, we get the decomposition 
\begin{equation}\label{secondsplitting}
    bf=\pi_b f+\pi^{*}_b f+\Lambda_{b}f,
\end{equation}
where 
\begin{equation}\label{deflambdab}
    \begin{split}
        \Lambda_{b}f=&\sum_{I\in \D}\fhaarI \langle b,h_I^2\rangle h_I\\
=&\Lambda_b^{0}f+\Lambda_b^{1}f,
    \end{split}
\end{equation}
and
\begin{equation}\label{deflambdas}
  \Lambda_{b}^{0}f(x)=\sum_{I} \fhaarI \avbI h_I(x),\,\text{and }\Lambda_{b}^{1}f(x)=\sum_{I\in \D}\fhaarI \langle b_k,h_I\rangle \left(\int h_I^3 d\mu\right) h_I(x).  
\end{equation}

\begin{proof}[Proof of Proposition \ref{prop:finalcommutator}]
The above decomposition leads to
$$
[\Hd,b_k]=[\Hd,\pi_{b_k}]+[\Hd,\pi_{b_k}^{*}]+[\Hd,\Lambda_{b_k}].
$$
By \cite[Theorem 4.8]{Treil13}, $\|\pi_{b_k}\|_{L^{2}(\mu_k)\rightarrow L^2(\mu_k)}\lesssim K_{b_k} \sim 2^{k/2}$, and the same holds for the adjoint $\pi_{b_k}^{*}$. We claim 
$$\|[\Hd,\Lambda_{b_k}]\|_{L^2(\mu_k)\rightarrow L^2(\mu_k)}\lesssim 1,
$$
which is enough to conclude. First, we observe that 
$$\Lambda_{b_k}f=\sum_{I} \fhaarI \left(\langle b_k,h_I\rangle \int h_I^3 d\mu_k+\langle b_k \rangle_I \right)h_I=:\sum_{I} \fhaarI c_{I}(b_k)h_I.$$
One has
\begin{equation*}
    \begin{split}
        [\Hd,\Lambda_{b_k}]f=&\Hd(\Lambda_{b_k} f)-\Lambda_{b_k}(\Hd f)\\
        =&\sum_{I} \fhaarI c_I(b_k)\text{sign}(I) h_{I^s}-\Lambda_{b_k} (\sum_I \fhaarI \text{sign(I)}h_{I^s})\\
        =&\sum_{I} \fhaarI c_I(b_k)\text{sign}(I) h_{I^s}- \sum_I \fhaarIsib \text{sign}(I^s)c_I(b_k)h_{I}\\
        =&\sum_{I} \fhaarI \text{sign}(I)(c_{I}(b_k)-c_{I^s}(b_k))h_{I^s}.
    \end{split}
\end{equation*}
We next claim that for all $I\in \D$ and any measure $\mu$, one has
\begin{equation}\label{lemmacubedhaar}
    \sqrt{m(I)}\int h_{I}^3(x) d\mu(x) =\frac{\mu(I^{+})-\mu(I^{-})}{\mu(I)}.
\end{equation}
Indeed,  
    \begin{equation*}
        \begin{split}
            m(I)^{1/2}\int h_I^3d\mu=&m(I)^2\left(\int_{I_{-}} \frac{1}{\mu(I_{-})^3}d\mu-\int_{I_{+}}\frac{1}{\mu(I_{+})^3}d\mu\right)\\
            =&\frac{\mu(I_{+})^2\mu(I_{-})^2}{\mu(I)^2}\left(\frac{1}{\mu(I_{-})^2}-\frac{1}{\mu(I_{+})^2}\right)\\
            =&\frac{\mu(I_{+})^2-\mu(I_{-})^2}{\mu(I)^2}=\frac{\mu(I_{+})-\mu(I_{-})}{\mu(I)}.
        \end{split}
    \end{equation*}
From \eqref{lemmacubedhaar}, if neither $I$ nor $I^s$ are in $\mathcal{I}$, then $(c_{I}(b_k)-c_{I^s}(b_k))=\langle b_k \rangle_{I} -\langle b_k \rangle_{I^s}=0.$ If $I=I_\ell^j\in \mathcal{I}$, then $\langle b_k \rangle_{I} +\langle b_k \rangle_{I^s}\lesssim 1$ so
    \begin{equation}
        \begin{split}
            c_I(b)=&\langle b_k,h_I\rangle\left(\int h_I^3d\mu_k\right)  +\langle b_k \rangle_{I} \\
            =& (\langle b_k\rangle_{I_{-}}-\langle b_k\rangle_{I_{+}})\frac{\mu_k(I_{+})-\mu_k(I_{-})}{\mu_k(I)}+O(1)\\
            =&\langle b_k\rangle_{I_{+}}\frac{\mu_k(I_{-})-\mu_k(I_{+})}{\mu_k(I)}+O(1)\\
            =&2^k \frac{\mu_k(I_{-})}{\mu_k(I)}+O(1)=2^{k}+O(1),
        \end{split}
    \end{equation}
    where in the last equality we used that ${\mu_k(I_{+})2^k}\sim |I|2^{-k}2^k\lesssim 1$. On the other side we have either $c_{I^s}(b_k)=2^k$ if $j\geq 1$, or $c_{I^s}(b_k)=c_I(b_k)$ if $j=0$. In any case, 
    $$|c_I(b_k)-c_{I^s}(b_k)|\lesssim 1.$$
\end{proof}

\begin{rem}
We leave it as an interesting open question whether one can construct a \emph{single} sibling balanced measure $\mu$ and function $b$ so that the following properties are satisfied:
\begin{enumerate}
    \item $\mu$ is sibling balanced;
    \item $b$ satisfies the Carleson packing condition but does not satisfy $b \in \BMO(\mu)$;
    \item The commutator $[\Hd, b]$ is bounded on $L^2(\mu).$
\end{enumerate}

In light of our previous computations in this section we expect that such a construction can be carried out.
\end{rem}

\subsection{General Haar shifts} \label{subsec14} We next sketch how the upper bound in Theorem \ref{th:theoremA} can be extended to other Haar shift operators. Assume $\mu$ is balanced and let $\alpha:=\{\alpha_{J,K}^{I}\}$ be a bounded  sequence of complex numbers and $u,v \in \N.$ We define the Haar shift $T^{u,v,\alpha}$ of complexity $(u,v)$ as

\begin{equation*}
T^{u,v,\alpha}f:= \sum_{I \in \D} \sum_{J \in \D_u(I)} \sum_{K \in \D_v(I)} \alpha_{J,K}^{I} \langle f, h_J \rangle h_K, \quad f \in L^2(\mu).  
\end{equation*}
A general Haar shift can be decomposed into simpler operators as follows. Given a dyadic interval $I=[2^{-k}p,2^{-k}(p+1))$ and an integer $u>0$, let 
$$
I_{u}^m=[2^{-k}p+ 2^{-k-u}(m-1),2^{-k}p+ 2^{-k-u}m), \; 1 \leq m \leq 2^u.
$$
Fix $u,v\geq 0$ and $1 \leq m \leq 2^u$, $1 \leq n \leq 2^v$, and consider the operator 
$$ 
T_{m,n}^{u,v,\alpha}(f)= \sum_{I \in \D} \alpha^{I}_{I_u^m, I_v^n} \langle f, h_{I_u^m} \rangle h_{I_v^n}, \quad f \in L^2(\mu),
$$
Clearly,
$$
T^{u,v,\alpha} = \sum_{m=1}^{2^{u}} \sum_{n=1}^{2^v} T_{m,n}^{u,v,\alpha}.
$$
Because of that, it is enough to obtain the estimate
$$
\left\|[T_{m,n}^{u,v,\alpha},b] \right\|_{L^p(\mu)\to L^p(\mu)} \lesssim \|b\|_{\BMO(\mu)},
$$
uniformly on $m,n$ to get it for a general $T^{u,v,\alpha}$. Call one such operator $T$. We can apply decomposition \eqref{paraproductdecomp} to get
\begin{equation*}
[T,b](f)=[T,\pi_b](f)+[T,\Delta_b](f)+(T\pi_fb-\pi_{Tf}b). 
\end{equation*}
The $L^p(\mu)$ bound for $T$ from \cite{LSMP} implies that we only have to handle the remainder $R_bf=T\pi_fb-\pi_{Tf}b.$ We compute

 \begin{align*}
 T\pi_fb-\pi_{Tf}b & = \sum_{I \in \D}  \langle f, h_I \rangle \langle b \rangle_{I} T(h_{I}) -\sum_{I \in \D} \langle Tf, h_I \rangle \langle b \rangle_{I} h_I\\
 & = \sum_{\substack{I \in \D:\\ \exists J \in \D \text{ so }I=J_u^m }} \alpha^{J}_{J_u^m, J_v^n}   \langle f, h_I \rangle \langle b \rangle_{I} h_{J_v^n}- \sum_{\substack{I \in \D:\\ \exists J \in \D \text{ so }I=J_v^n }} \alpha^{J}_{J_u^m, J_v^n}   \langle f, h_{J_u^m} \rangle \langle b \rangle_{I} h_{I}\\
 & = \sum_{J \in \D} \alpha^{J}_{J_u^m, J_v^n}  \langle f, h_{J_u^m} \rangle \langle b \rangle_{J_u^m} h_{J_v^n}- \sum_{J \in \D} \alpha^{J}_{J_u^m, J_v^n}    \langle f, h_{J_u^m} \rangle \langle b \rangle_{J_v^n} h_{J_v^n}\\
 & =  \sum_{J \in \D} \beta^{J}_{J_u^m, J_v^n} \langle f, h_{J_u^m} \rangle  h_{J_v^n},
 \end{align*}
 where $\beta^{J}_{J_u^m, J_v^n}=\alpha^{J}_{J_u^m, J_v^n} (\langle b \rangle_{J_u^m}- \langle b \rangle_{J_v^n})$. It is clear that

 $$ \sup_{J \in \D}|\beta^{J}_{J_u^m, J_v^n}| \leq (u+v) \|b\|_{\text{BMO}(\mu)},$$
 whereby the estimate 
 $$ \|R_b f \|_{L^p(\mu)} \lesssim \|b\|_{\text{BMO}(\mu)}\|T\|_{L^{p}(\mu)\rightarrow L^{p}(\mu) } \|f\|_{L^p(\mu)}$$
 easily follows. 

 We emphasize that this upper bound for the $L^p$ norm of the commutator $[T,b]$ in terms of the BMO norm seems to be new even for the  case where $T$ is the classical dyadic Hilbert transform given by $\mathcal{S}f=\sum_{I\in \D}\fhaarI (h_{I_{-}}-h_{I_{+}})$. Endpoint behavior results for $[\mathcal{S},b]$ can be found in \cite[Corollary 5.9]{Bonami2023} under the assumption that $\mu$ is $m$-increasing.
 
\section{Weights, commutators and \texorpdfstring{$\BMO$}{Lg}}\label{secweights}

\subsection{A new weight class} The class $\Ahat_p(\mu)$ is smaller in general than $A_p^{bal}(\mu)$ as we will show in Subsection \ref{subseccompareweights} for the particular case $p=2$ for simplicity. We take advantage of that to characterize $\BMO(\mu)$ in terms of weights. The results in this subsection hold for any locally finite measure $\mu$ which is non-zero on dyadic intervals; the balanced assumption plays no role in the analysis.  We begin the proof of 
Theorem \ref{th:theoremB}, starting with
the proof that the exponential of a $\BMO(\mu)$ function belongs to $\Ahat_p(\mu)$. 

\begin{lemma} \label{sufficient} Let $1<p<\infty.$ There exist independent constants $C_1, C_2$, so that if $b \in \BMO(\mu)$, and $0<\delta<\frac{C_1\min\{1, p-1\} }{\|b\|_{\BMO(\mu)}}$, $e^{\delta b} \in \widehat{A}_p(\mu)$ with 
$$
[e^{\delta b}]_{\widehat{A}_p(\mu)} \leq C_2.
$$ 
\end{lemma}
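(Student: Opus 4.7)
The plan is to control the $\widehat{A}_p(\mu)$ supremum directly, using the two John--Nirenberg estimates \eqref{JNMart1} and \eqref{JNMart2} to bound exponential averages of $b$ on a dyadic interval when it is centered either at its own mean or at the mean on its parent. By homogeneity (replacing $b$ by $b/\|b\|_{\BMO(\mu)}$ and scaling $\delta$) I may assume $\|b\|_{\BMO(\mu)}=1$, and I need to show that $\langle e^{\delta b}\rangle_I \langle e^{-\delta b/(p-1)}\rangle_J^{p-1}\leq C_2$ for every pair $(I,J)$ with $J\in\{I,\widehat{I}\}$ or $I\in\{J,\widehat{J}\}$, provided $\delta<C_1\min\{1,p-1\}$.

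The key observation is that both \eqref{JNMart1} and \eqref{JNMart2} imply, by the standard layer-cake argument, that there exist constants $c, \delta_0>0$ such that whenever $0<\lambda<\delta_0$ one has
\begin{equation*}
\bigl\langle e^{\lambda|b-\langle b\rangle_I|}\bigr\rangle_I \leq c, \qquad \bigl\langle e^{\lambda|b-\langle b\rangle_{\widehat I}|}\bigr\rangle_I \leq c,
\end{equation*}
for every $I\in \D$. I would take $C_1=\delta_0/2$ so that both $\delta$ and $\delta/(p-1)$ lie strictly below $\delta_0$.

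With this at hand, the three cases are handled by a common ``centering and cancellation'' trick. In the diagonal case $J=I$, write
\begin{equation*}
\langle e^{\delta b}\rangle_I \langle e^{-\delta b/(p-1)}\rangle_I^{p-1}
= \bigl\langle e^{\delta(b-\langle b\rangle_I)}\bigr\rangle_I \bigl\langle e^{-\delta(b-\langle b\rangle_I)/(p-1)}\bigr\rangle_I^{p-1},
\end{equation*}
since the constants $e^{\pm\delta\langle b\rangle_I}$ and $e^{\mp\delta\langle b\rangle_I}$ cancel when raised to the appropriate powers, and then both factors are bounded via \eqref{JNMart2}. For $J=\widehat{I}$, I center both averages at $\langle b\rangle_{\widehat I}$, so that the scalar exponentials again cancel. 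The first factor becomes $\langle e^{\delta(b-\langle b\rangle_{\widehat I})}\rangle_I$, controlled by \eqref{JNMart1} applied on $I$, and the second is $\langle e^{-\delta(b-\langle b\rangle_{\widehat I})/(p-1)}\rangle_{\widehat I}^{p-1}$, controlled by \eqref{JNMart2} applied on $\widehat I$. The case $I=\widehat{J}$ is symmetric: I now center both averages at $\langle b\rangle_{\widehat J}$ and use \eqref{JNMart2} on $\widehat J$ together with \eqref{JNMart1} on $J$.

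The main delicate point is the off-diagonal case: it forces us to subtract the parent average rather than the local average on the smaller interval, which is precisely why the formulation \eqref{JNMart1} of the John--Nirenberg inequality, and not just the more familiar \eqref{JNMart2}, is essential. The requirement $\delta<C_1\min\{1,p-1\}$ reflects the fact that one must keep both $\delta$ and $\delta/(p-1)$ below the John--Nirenberg threshold $\delta_0$, and $C_2$ is then any constant dominating the product of the four resulting exponential-moment bounds.
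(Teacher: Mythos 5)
Your argument is correct and is essentially the paper's own proof: after rescaling to $\|b\|_{\BMO(\mu)}=1$, the layer-cake computation via \eqref{JNMart1} and \eqref{JNMart2} produces the exponential-moment bounds, and the same centering-and-cancellation trick handles the three pair types in the $\widehat{A}_p(\mu)$ supremum. The only cosmetic difference is that the paper centers the diagonal case at $\langle b\rangle_{\widehat I}$ as well (so that all four estimates are expressed relative to the parent average), whereas you center the diagonal case at $\langle b\rangle_I$; both work equally well.
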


\begin{proof}
By rescaling, we may assume that $\|b\|_{\BMO(\mu)}=1.$ Let $\delta_0$ a parameter for which the John-Nirenberg inequality holds as in (\ref{JNMart1}) and (\ref{JNMart2}). We claim that for any $0<\delta \leq \frac{\delta_0 \min \{1, p-1\} }{2 }$ the function $e^{\delta b} \in \widehat{A}_p(\mu)$ with uniformly bounded $\widehat{A}_p(\mu)$ characteristic, by which the conclusion of the lemma follows. To this end, we estimate using \eqref{JNMart1}:

\begin{align*}
\int_{I} e^{\delta b(x)}\, d\mu(x) & = \int_{I} e^{\delta (b(x)- \langle b \rangle_{\widehat{I}})}\, d\mu(x) \cdot e^{\delta \langle b \rangle_{\widehat{I}}}\\
& = e^{\delta \langle b \rangle_{\widehat{I}}} \int_{0}^{\infty} \mu(\{x \in I: e^{\delta (b(x)- \langle b \rangle_{\widehat{I}})}>\alpha \}) \, d \alpha \\
& \leq e^{\delta \langle b \rangle_{\widehat{I}}} \left( \mu(I)+\int_{1}^{\infty} \mu(\{x \in I: e^{\delta |b(x)- \langle b \rangle_{\widehat{I}}|}>\alpha \}) \, d \alpha\right) \\
& = e^{\delta \langle b \rangle_{\widehat{I}}} \left(\mu(I)+    \int_{1}^{\infty} \mu(\{x \in I: |b(x)- \langle b \rangle_{\widehat{I}}|>\frac{\log \alpha}{\delta} \}) \, d \alpha  \right) \\
& \lesssim e^{\delta \langle b \rangle_{\widehat{I}}} \mu(I) \left( 1+ \int_{1}^{\infty} e^{- \frac{\log \alpha}{\delta} \delta_0} \, d\alpha  \right)\\
& \lesssim e^{\delta \langle b \rangle_{\widehat{I}}} \mu(I),
\end{align*}
as long as $\frac{\delta_0}{\delta} > 1$ (which is guaranteed by our hypothesis on $\delta$). Moreover, the implied constant is independent of $\delta$ and we conclude
$$ \langle e^{\delta b(x)} \rangle_{I} \lesssim e^{\delta \langle b \rangle_{\widehat{I}}}.  $$
Entirely similarly, we can also obtain the bounds 

$$ \langle e^{\delta b(x)} \rangle_{\widehat{I}} \lesssim e^{\delta \langle b \rangle_{\widehat{I}}}, \quad \langle e^{-\delta b(x)/(p-1)} \rangle_{I}^{p-1} \lesssim e^{-\delta \langle b \rangle_{\widehat{I}}}, \quad \langle e^{-\delta b(x)/(p-1)} \rangle_{\widehat{I}}^{p-1} \lesssim e^{-\delta \langle b \rangle_{\widehat{I}}}$$
with constants independent of $\delta.$ These estimates together prove the desired estimates for $[e^{\delta b}]_{\widehat{A}_p(\mu)}.$
\end{proof}

We can obtain a partial converse of Lemma \ref{sufficient} for general $A_2(\mu)$ weights, and a full converse in the case $\omega$ belongs to the stronger class $\widehat{A}_2(\mu).$

\begin{lemma} \label{PartialConverse}
If $\omega \in A_2(\mu)$, then $\log \omega$ satisfies

$$ \sup_{I \in \D} \langle |\log \omega- \langle \log \omega \rangle_{I}| \rangle_{I}< \infty .$$
In particular, the function $K_{\log \omega}^2 < \infty$ (as defined in (\ref{bmodef})), and if instead we assume the stronger condition $\omega \in \widehat{A}_2(\mu)$, then $\log \omega \in \BMO(\mu).$

\end{lemma}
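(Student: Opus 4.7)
The plan is to derive both assertions from Jensen's inequality applied to $\log \omega$, with the $\widehat{A}_2(\mu)$ assumption serving in the second part to handle a parent--child chaining step. First I would fix $I \in \D$ and observe via Jensen (with $e^x$ and $e^{-x}$) that $e^{\langle \log \omega \rangle_I} \leq \langle \omega\rangle_I$ and $e^{-\langle \log \omega \rangle_I} \leq \langle \omega^{-1}\rangle_I$. Multiplying these by $\langle \omega^{-1}\rangle_I$ and $\langle \omega\rangle_I$ respectively and using $\langle \omega\rangle_I\langle \omega^{-1}\rangle_I \leq [\omega]_{A_2(\mu)}$ yields the symmetric pair
$$ \frac{1}{\mu(I)}\int_I e^{\pm(\log \omega - \langle \log \omega \rangle_I)}\, d\mu \leq [\omega]_{A_2(\mu)}. $$
Summing these controls $\mu(I)^{-1}\int_I \cosh(\log \omega - \langle \log \omega \rangle_I)\, d\mu$ uniformly in $I$, and the elementary inequalities $\cosh(x) \gtrsim |x|$ and $\cosh(x) \geq 1 + x^2/2$ yield the $L^1$ oscillation bound and $K^2_{\log \omega} < \infty$ in one stroke.

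For the second assertion, by the equivalence \eqref{bmodef}, and since $K^2_{\log \omega} < \infty$ is already established (the diagonal pair $(I,I)$ is admissible in \eqref{A2s}, so $\widehat{A}_2(\mu) \subset A_2(\mu)$), it suffices to bound $D_{\log \omega} := \sup_{I \in \D} |\langle \log \omega \rangle_{I_+} - \langle \log \omega \rangle_{I_-}|$. The key observation is that although plain $A_2(\mu)$ knows nothing about pairings across siblings, \eqref{A2s} does provide the two parent--child estimates
$$ \langle \omega \rangle_{I_+}\langle \omega^{-1}\rangle_I \leq [\omega]_{\widehat{A}_2(\mu)}, \qquad \langle \omega \rangle_I\langle \omega^{-1}\rangle_{I_-} \leq [\omega]_{\widehat{A}_2(\mu)}, $$
because $I = \widehat{I_+} = \widehat{I_-}$. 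Multiplying the two and canceling via the Cauchy--Schwarz lower bound $\langle \omega \rangle_I\langle \omega^{-1}\rangle_I \geq 1$ telescopes to
$$ \langle \omega \rangle_{I_+}\langle \omega^{-1}\rangle_{I_-} \leq [\omega]_{\widehat{A}_2(\mu)}^2. $$
Taking logs and combining with Jensen's inequality $\langle \log \omega\rangle_{I_+} \leq \log \langle \omega\rangle_{I_+}$ and $-\langle \log \omega\rangle_{I_-} \leq \log \langle \omega^{-1}\rangle_{I_-}$ gives $\langle \log \omega\rangle_{I_+} - \langle \log \omega\rangle_{I_-} \leq 2\log [\omega]_{\widehat{A}_2(\mu)}$, and the same argument with $I_+$ and $I_-$ reversed handles the other direction.

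The main (indeed only) subtlety is the passage from the bilinear $\widehat{A}_2(\mu)$ constraint to a bound on the inter-sibling product $\langle \omega\rangle_{I_+}\langle \omega^{-1}\rangle_{I_-}$, which is not itself an admissible pair in \eqref{A2s}. The chain through the common parent $I$ together with the free Cauchy--Schwarz factor is what closes this gap, and this is precisely the feature that makes $\widehat{A}_2(\mu)$, rather than plain $A_2(\mu)$, the correct non-doubling analogue of the classical Muckenhoupt class for a genuine characterization of $\BMO(\mu)$: the latter suffices for the inner oscillation estimates but is powerless against the inter-sibling jumps measured by $D_{\log \omega}$.
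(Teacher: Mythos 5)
Your argument is correct. For the first assertion it is essentially the paper's proof: both rely on the two Jensen bounds $e^{\pm\langle\log\omega\rangle_I}\leq\langle\omega^{\pm1}\rangle_I$ together with the $A_2(\mu)$ condition, the only cosmetic difference being that you package the two resulting exponential estimates as a $\cosh$ bound while the paper keeps them as a sum of two exponentials bounding $\exp(\langle|\log\omega-\langle\log\omega\rangle_I|\rangle_I)$.

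For the second assertion you take a genuinely different route. The paper simply repeats the same exponential computation with $\langle\log\omega\rangle_{\widehat{I}}$ in place of $\langle\log\omega\rangle_I$, producing the bound $\langle\omega\rangle_I\langle\omega^{-1}\rangle_{\widehat{I}}+\langle\omega^{-1}\rangle_I\langle\omega\rangle_{\widehat{I}}\leq 2[\omega]_{\widehat{A}_2(\mu)}$, which directly controls $\sup_I\langle|\log\omega-\langle\log\omega\rangle_{\widehat{I}}|\rangle_I$, i.e.\ the $\BMO(\mu)$ norm itself, in a single stroke and without invoking the equivalent norm of \eqref{bmodef}. You instead route through \eqref{bmodef}, reducing to the jump term $D_{\log\omega}$ and bounding it by the chained product estimate $\langle\omega\rangle_{I_+}\langle\omega^{-1}\rangle_{I_-}\leq[\omega]_{\widehat{A}_2(\mu)}^2$, obtained from the two admissible parent--child pairs together with $\langle\omega\rangle_I\langle\omega^{-1}\rangle_I\geq1$; this is correct, and the chaining-through-the-parent observation is a nice one. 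The trade-off: the paper's argument is shorter and avoids leaning on the John--Nirenberg-based equivalence \eqref{bmodef}, whereas your decomposition isolates more transparently \emph{why} $\widehat{A}_2(\mu)$, and not plain $A_2(\mu)$, is the correct hypothesis --- it is exactly the parent--child pairings that bridge the inter-sibling jumps. Both proofs are valid; one could even extract from your chaining step the slightly more explicit quantitative bound $D_{\log\omega}\leq 2\log[\omega]_{\widehat{A}_2(\mu)}$.
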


\begin{proof}
For the first part, the classical proof works here without modification. Indeed, by Jensen's inequality 

\begin{align*}
\text{exp}\left( \langle |\log \omega- \langle \log \omega \rangle_{I}| \rangle_{I} \right) & = \text{exp}\left( \langle |\log \omega+ \langle \log \omega^{-1} \rangle_{I}| \rangle_{I} \right)\\
& \leq \left \langle \text{exp} \left( |\log \omega + \langle \log \omega^{-1} \rangle_{I}| \right) \right \rangle_{I}\\
& \leq \left \langle \text{exp} \left( \log \omega + \langle \log \omega^{-1} \rangle_{I} \right) \right \rangle_{I} + \left \langle \text{exp} \left( -\log \omega - \langle \log \omega^{-1} \rangle_{I} \right) \right \rangle_{I} \\
& = \langle \omega \rangle_{I} \cdot \text{exp}\left( \langle \log \omega^{-1} \rangle_{I} \right)+ \langle \omega^{-1} \rangle_{I} \cdot \text{exp}\left( \langle \log \omega \rangle_{I} \right)\\
& \leq 2 \langle \omega \rangle_{I} \langle \omega^{-1} \rangle_{I}\\
& \leq 2[\omega]_{A_2(\mu)}.
\end{align*}
If we instead assume $\omega \in \widehat{A}_2(\mu)$, the second assertion directly follows from the above computation, replacing $\langle \log \omega \rangle_{I}$ by $\langle \log \omega \rangle_{\widehat{I}}$ in the first expression.
\end{proof}

Theorem \ref{th:theoremB}, part \textbf{(a)} is a direct corollary of Lemmas \ref{sufficient} and \ref{PartialConverse}. We next focus on part \textbf{(b)}. The following straightforward property turns out to be crucial, because it lets us get around the lack of a doubling property for $\mu$ in stopping time arguments. If $\omega \in \widehat{A}_p(\mu)$, $\lambda>0$ and there exists $I \in\D$ maximal such that $\langle \omega \rangle_{I}> \lambda$, then
\begin{equation} 
\langle \omega \rangle_{I} \leq [\omega]_{\widehat{A}_p(\mu)} \lambda. \label{StoppingTime} 
\end{equation}
Indeed, using the $\widehat{A}_p(\mu)$ condition, H\"{o}lder's inequality, and maximality we have
$$\langle \omega \rangle_{I}  \leq \frac{[\omega]_{\widehat{A}_p(\mu)}}{\langle \sigma \rangle_{\widehat{I}}^{p-1}} \leq [\omega]_{\widehat{A}_p(\mu)} \langle \omega \rangle_{\widehat{I}} \leq [\omega]_{\widehat{A}_p(\mu)} \lambda.
$$

\begin{proof}[Proof of Theorem \ref{th:theoremB}, part \textbf{(b)}] We follow an argument which is similar to the one presented in \cite[Lemma 2.2]{HytonenPerezRela} and will not concern ourselves with sharp constants here. It is straightforward to check that if $\omega \in A_p(\mu)\supset \Ahat_p(\mu)$ for some $1<p<\infty$, then

\begin{equation}
\sup_{I \in \D} \frac{ (\MaximalD(\omega \mathbbm{1}_I))(I)}{\omega(I)}=\sup_{I \in \D} \frac{\int_{I} \MaximalD(\omega \mathbbm{1}_I) \, d\mu}{\omega(I)}=: [\omega]_{A_\infty(\mu)}< \infty.\label{Ainf}
\end{equation}
This property of $A_p(\mu)$ weights does not rely on $\mu$ being dyadically doubling, only the boundedness of the (unweighted) dyadic maximal function $\MaximalD$ on $L^{p'}(\sigma)$ for $\sigma \in A_{p'}(\mu).$ Indeed, assuming such a weighted bound for $\MaximalD$, we have
\begin{align*}
\int_{I} \MaximalD(\omega \mathbbm{1}_{I}) \, d\mu & \leq \left(\int_{I} \MaximalD(\omega \mathbbm{1}_{I})^{p'} \sigma  \, d\mu \right)^{1/p'} \omega(I)^{1/p} \\
&  \lesssim_{\omega} \left( \int_{I} \omega^{p'} \sigma \, d\mu \right)^{1/p'} \omega(I)^{1/p} = \omega(I),
\end{align*}
which proves \eqref{Ainf}. The weighted bound for the maximal function $\MaximalD:L^{q}(\sigma) \rightarrow L^{q}(\sigma)$ for $q=p'$ can be seen by observing the pointwise bound
$$ \MaximalD f(x_0) \leq [\sigma]_{A_q}^{1/(q-1)} \left\{\MaximalD^\sigma(\sigma^{-1} (\MaximalD^{\omega} (f \omega^{-1}))^{q-1})(x_0) \right \}^{1/(q-1)}, $$
and then applying the bounds for the weighted maximal functions $\MaximalD^\sigma$ and $\MaximalD^\omega$ on $L^{q'}(\sigma)$ and $L^{q}(\omega)$, respectively. Next, using the trivial bound $\omega(x) \leq \MaximalD(\omega \mathbbm{1}_I)(x)$ for $\mu$-almost every $x \in I$, we can reduce our goal to showing 
\begin{equation}
\left( \frac{1}{\mu(I)}\int_{I} [\MaximalD(\omega \mathbbm{1}_I)]^\gamma \, d\mu \right)^{1/\gamma} \lesssim_{\omega} \left( \frac{1}{\mu(I)}\int_{I} \omega\, d\mu \right)
\label{ReduRH}
\end{equation}
for some $\gamma>1$. Set $E_\lambda= \{x \in I: \MaximalD(\omega \mathbbm{1}_I)(x)> \lambda\} $. We write, using the distribution function and \eqref{Ainf},
\begin{align*}
\int_{I} [\MaximalD(\omega \mathbbm{1}_I)]^\gamma \, d\mu & = \int_{0}^{\infty} (\gamma-1) \lambda^{\gamma-2}\MaximalD(\omega \mathbbm{1}_I)\left(E_\lambda  \right) \, d\lambda \\
& \leq \int_{0}^{\langle \omega \rangle_{I}} (\gamma-1) \lambda^{\gamma-2}\MaximalD(\omega \mathbbm{1}_I)\left( I \right) \, d\lambda+ \int_{\langle \omega \rangle_{I}}^{\infty} (\gamma-1) \lambda^{\gamma-2}\MaximalD(\omega \mathbbm{1}_I)\left(E_\lambda  \right) \, d\lambda \\
& \leq [\omega]_{A_\infty(\mu)} \omega(I) \langle \omega \rangle_{I}^{\gamma-1} + (\gamma-1)\int_{\langle \omega \rangle_{I}}^{\infty} \lambda^{\gamma-2}\MaximalD(\omega \mathbbm{1}_I)\left(E_\lambda  \right) \, d\lambda.
\end{align*}
Let $J_j^\lambda$ be the collection of maximal intervals so that $\langle \omega \rangle_{J_j^\lambda}>\lambda$, and when $\lambda> \langle \omega \rangle_{I}$, these $J_j^\lambda$ are contained in $I$ and cover $E_\lambda.$ Crucially using \eqref{StoppingTime}, we have \begin{equation}\omega(J_j^\lambda) \leq [\omega]_{\widehat{A}_p(\mu)} \lambda \mu(J_j^\lambda)\label{ReplaceStopTime}.\end{equation} Then, we estimate the remaining integral on the right hand side as follows, using estimate \eqref{Ainf} again as well as \eqref{ReplaceStopTime}:

\begin{align*}
(\gamma-1)\int_{\langle \omega \rangle_{I}}^{\infty} \lambda^{\gamma-2}\MaximalD(\omega \mathbbm{1}_I)\left(E_\lambda  \right) \, d\lambda & = (\gamma-1) \int_{\langle \omega \rangle_{I}}^{\infty} \lambda^{\gamma-2} \sum_j \MaximalD(\omega \mathbbm{1}_I)\left(J_j^\lambda \right) \, d\lambda \\
& \leq [\omega]_{A_\infty(\mu)} (\gamma-1) \int_{\langle \omega \rangle_{I}}^{\infty} \lambda^{\gamma-2} \sum_j\omega(J_j^\lambda) \, d\lambda \\
& \leq [\omega]_{A_\infty(\mu)} [\omega]_{\widehat{A}_p(\mu)} (\gamma-1) \int_{\langle \omega \rangle_{I}}^{\infty} \lambda^{\gamma-1} \sum_j \mu (J_j^\lambda) \, d\lambda \\
& = [\omega]_{A_\infty(\mu)} [\omega]_{\widehat{A}_p(\mu)} (\gamma-1) \int_{\langle \omega \rangle_{I}}^{\infty} \lambda^{\gamma-1} \mu(E_\lambda) \, d\lambda \\
& \leq [\omega]_{A_\infty(\mu)} [\omega]_{\widehat{A}_p(\mu)} \left( \frac{\gamma-1}{\gamma} \right) \int_{I} [\MaximalD(\omega \mathbbm{1}_I)]^\gamma \, d\mu.
\end{align*}
Choosing $\gamma$ sufficiently close to $1$, we can guarantee that 
$$  [\omega]_{A_\infty(\mu)} [\omega]_{\widehat{A}_p(\mu)} \left( \frac{\gamma-1}{\gamma} \right)< \frac{1}{2},$$ which then implies

$$  \int_{I} [\MaximalD(\omega \mathbbm{1}_I)]^\gamma \, d\mu \leq 2 [\omega]_{A_\infty(\mu)} \omega(I) \langle \omega \rangle_{I}^{\gamma-1} ,$$
completing the proof.
    
\end{proof}

\subsection{Weighted estimates for \texorpdfstring{$[\Hd,b]$}{Lg}} We are ready to prove Theorem \ref{th:theoremC}. In this subsection, we go back to assuming that $\mu$ is sibling balanced and also assume $\mu$ is atomless for technical reasons. 

\begin{proof}[Proof of Theorem \ref{th:theoremC}]
 We assume $p=2$, but the case $p \neq 2$ can be handled in a similar manner with only minor modifications. The essential idea is to apply the Cauchy Integral trick, which is used in \cite{ChungPereyraPerez} for weighted estimates and was first used in \cite{CoifmanRochbergWeiss}. Consider the analytic family of operators
$$
\Hd_z f= e^{zb} \Hd(e^{-zb}f).
$$ 
If $|z|$ is sufficiently small, Lemma \ref{sufficient} shows $e^{|z|b}$ belongs to $\widehat{A}_2(\mu)$, and hence $\Hd_z$ extends to a bounded operator on $L^2(\mu)$ (see Appendix \ref{sec:app}). We wish to show now that $[\Hd, b]$ extends to a bounded operator on $L^2(\omega d\mu).$ For sufficiently small $\varepsilon>0$ depending on $\|b\|_{\BMO(\mu)}$, and bounded, compactly supported $f$, the function $\Hd_z(f)$ is analytic in the variable $z$, and thus the Cauchy integral formula gives
$$[b,\Hd]f= \frac{d}{dz}\biggr\lvert_{z=0} \left(\Hd_z (f)\right)= \frac{1}{2 \pi i}\int_{|z|=\varepsilon} \frac{\Hd_z(f)}{z^2} \,  dz.$$

We will show that we can choose $\varepsilon>0$ to be equal to $\frac{C(\omega,\mu)}{\|b\|_{\BMO(\mu)}}$, where $C(\omega,\mu)$ depends on $[\omega]_{\Ahat_2(\mu)}$ and $[\mu]_{sib}.$  In particular, suppose that we can show for this choice of $\varepsilon$, there holds
\begin{equation}
\sup_{|z|=\varepsilon}\|\Hd\|_{L^2(\omega e^{2 \text{Re}(z)\, b}) \rightarrow L^2(\omega e^{2 \text{Re}(z)\, b})} \lesssim  C(\omega, \mu).  \label{UniformSupNorm}
\end{equation}
Then, by Minkowski's integral inequality, we can estimate
\begin{align*}
\|[b,\Hd]f\|_{L^2(\omega)} & \leq \frac{1}{2 \pi} \int_{|z|=\varepsilon} \frac{\| \Hd_{z} f \|_{L^2(\omega)}}{\varepsilon^2}\, |dz|\\
& \lesssim \frac{1}{\varepsilon} \sup_{|z|=\varepsilon}\|\Hd_{z}\|_{L^2(\omega) \rightarrow L^2(\omega)} \|f\|_{L^2(\omega)}  \\
& \lesssim \frac{1}{\varepsilon}\sup_{|z|=\varepsilon}\|\Hd\|_{L^2(\omega e^{2 \text{Re}(z) b}) \rightarrow L^2(\omega e^{2 \text{Re}(z) b})}\|f\|_{L^2(\omega)} \\ 
& \lesssim C(\omega, \mu)\|b\|_{\BMO(\mu)} \|f\|_{L^2(\omega)},
\end{align*}
completing the proof that $[\Hd, b]$ is a bounded operator on $L^2(\omega).$

Again, to prove \eqref{UniformSupNorm} it is sufficient to observe that for this choice of $\varepsilon$ and $|z|=\varepsilon$, $\omega e^{2 \text{Re}(z) \, b}\in\widehat{A}_2(\mu)$ with weight characteristic uniformly bounded (in $z$) by some $C(\omega, \mu).$ Since $\omega, \omega^{-1} \in \widehat{A}_2(\mu)$ by hypothesis, both satisfy a reverse H\"{o}lder inequality by Theorem \ref{th:theoremB}. Thus, there exists $\gamma>1$ depending on $[\omega]_{\widehat{A}_2(\mu)}$ so that for all $I \in \D$

\begin{equation}
\left( \frac{1}{\mu(I)}\int_{I} \omega^\gamma \, d\mu \right)^{1/\gamma} \lesssim_{\omega} \left( \frac{1}{\mu(I)}\int_{I} \omega\, d\mu \right), \quad \left( \frac{1}{\mu(I)}\int_{I} \omega^{-\gamma} \, d\mu \right)^{1/\gamma} \lesssim_{\omega}  \left( \frac{1}{\mu(I)}\int_{I} \omega^{-1} \, d\mu \right).
\label{ReverseHolder}    \end{equation}

Now, choose $\varepsilon_0>0$ sufficiently small so that for any $0<\varepsilon \leq \varepsilon_0$, the exponent $2 \varepsilon \gamma'$ satisfies the conclusion of Lemma \ref{sufficient}. Note that $\varepsilon_0 \sim_{\omega} \|b\|_{\BMO(\mu)}^{-1}$. Therefore, if
$|z|=\varepsilon \leq \varepsilon_0$, we estimate, applying H\"{o}lder's inequality with exponents $\gamma,\gamma'$ and $J \in \{I, \widehat{I}\}$:
\begin{align*}
&  \left(\frac{1}{\mu(I)}\int_{I} \omega e^{2 \text{Re}(z)\, b} \, d\mu \right) \left( \frac{1}{\mu(J)}\int_{J} \omega^{-1} e^{-2 \text{Re}(z) \,b} \, d\mu \right)\\
& \leq  \left( \frac{1}{\mu(I)}\int_{I} \omega^\gamma  \, d\mu \right)^{1/\gamma} \left( \frac{1}{\mu(J)}\int_{J} \omega^{-\gamma}  \, d\mu \right)^{1/\gamma}\\
& \times \left( \frac{1}{\mu(I)}\int_{I} e^{2 \gamma' \text{Re}(z)\ b} \, d\mu \right)^{1/ \gamma'} \left( \frac{1}{\mu(J)}\int_{J} e^{-2 \gamma' \text{Re}(z)\ b} \, d\mu \right)^{1/ \gamma'} \\
& \lesssim C(\omega, \mu)  \left( \frac{1}{\mu(I)}\int_{I} \omega  \, d\mu \right)\left( \frac{1}{\mu(J)}\int_{J} \omega^{-1}  \, d\mu \right)\\
& \times \left( \frac{1}{\mu(I)}\int_{I} e^{2 \gamma' \text{Re}(z)\ b} \, d\mu \right)^{1/ \gamma'} \left( \frac{1}{\mu(J)}\int_{J} e^{-2 \gamma' \text{Re}(z)\ b} \, d\mu \right)^{1/ \gamma'} \\
& \lesssim C(\omega, \mu) [e^{2 \gamma' \text{Re}(z) \,b}]_{\widehat{A}_2(\mu)}^{1/\gamma'}\\
& \lesssim C(\omega,\mu).
\end{align*}
Obviously, the same estimates hold with the roles of $I$ and $J$ interchanged. We have shown $\omega e^{2 \text{Re}(z)\, b}\in\widehat{A}_2(\mu)$ with 
$$ \sup_{|z|=\varepsilon} [\omega e^{2  \text{Re}(z) \, b}]_{\widehat{A}_2(\mu)} \lesssim C(\omega, \mu).$$ 
By Proposition \ref{AvgFather} one can also bound $[\omega e^{2\text{Re}(z)b}]_{A_{2}^{sib}(\mu)}$ uniformly in $|z|=\varepsilon$
which combined with Corollary \ref{weightedresultforH} implies \eqref{UniformSupNorm}. That concludes the proof by the inequalities immediately following \eqref{UniformSupNorm}. 

\end{proof}

\begin{rem}
In the special case $\omega \equiv 1$, Theorem \ref{th:theoremC} provides an alternative proof of the upper bound for $[\Hd,b]$ that does not use either of the paraproduct decompositions given in Section \ref{sec1}.
\end{rem}

The following analog of Theorem \ref{th:theoremC} holds for general Haar shifts $T$ like those introduced in Section \ref{subsec14}. The proof is very similar, and we leave the details to the interested reader. 

\begin{prop} \label{CommutatorWeighted}
Let $\mu$ be balanced and atomless, $1<p<\infty$, $T$ be a Haar shift, $b \in \BMO(\mu)$, and $\omega \in \widehat{A}_p(\mu)$. Then there holds 
\begin{equation*}\|[T,b]\|_{L^p(\omega)\rightarrow L^{p}(\omega)} \lesssim_{[\omega]_{\Ahat_p}, p,\mu} \|b\|_{\BMO(\mu)}.
\end{equation*}
\end{prop}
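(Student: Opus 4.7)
The plan is to follow the same Cauchy integral scheme used in the proof of Theorem \ref{th:theoremC}, replacing $\Hd$ by a general Haar shift $T$ and the weight class $A_2^{sib}(\mu)$ by $A_p^{bal}(\mu)$, which is the class for which the $L^p(\omega)$-boundedness of arbitrary Haar shifts is known (for balanced $\mu$) by \cite{CAPW}. Concretely, one first reduces, by the decomposition recorded in Subsection \ref{subsec14}, to the case when $T$ is one of the single-pair pieces $T_{m,n}^{u,v,\alpha}$, so that the $L^p(\omega)$-bound is available with a constant depending only on $u,v,[\omega]_{A_p^{bal}(\mu)},p,\mu$.

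Consider the analytic family $T_z f = e^{zb}T(e^{-zb}f)$. For bounded and compactly supported $f$, $T_zf$ is analytic in $z$ in a neighborhood of the origin, and the Cauchy integral formula yields
\begin{equation*}
[T,b]f \;=\; \frac{d}{dz}\bigg|_{z=0} T_z f \;=\; \frac{1}{2\pi i}\int_{|z|=\varepsilon} \frac{T_z f}{z^2}\,dz.
\end{equation*}
Applying Minkowski's integral inequality in $L^p(\omega)$ and noting that $\|T_zf\|_{L^p(\omega)} = \|Tg\|_{L^p(\omega e^{p\,\mathrm{Re}(z)b})}$ for $g = e^{-zb}f$, we obtain
\begin{equation*}
\|[T,b]f\|_{L^p(\omega)} \;\lesssim\; \frac{1}{\varepsilon}\,\sup_{|z|=\varepsilon}\,\|T\|_{L^p(\omega e^{p\,\mathrm{Re}(z)b})\to L^p(\omega e^{p\,\mathrm{Re}(z)b})}\,\|f\|_{L^p(\omega)}.
\end{equation*}

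The core of the argument is then to choose $\varepsilon\sim_{[\omega]_{\Ahat_p(\mu)},p}\|b\|_{\BMO(\mu)}^{-1}$ so that the weight $\omega e^{p\,\mathrm{Re}(z)b}$ lies in $\Ahat_p(\mu)$ uniformly in $|z|=\varepsilon$. By Theorem \ref{th:theoremB}\textbf{(b)}, both $\omega$ and $\sigma = \omega^{1-p'}$ satisfy reverse H\"older with some exponent $\gamma>1$ depending on $[\omega]_{\Ahat_p(\mu)}$. Pairing each of the two averages in the $\Ahat_p(\mu)$-condition for $\omega e^{p\,\mathrm{Re}(z)b}$ against H\"older with exponents $\gamma,\gamma'$, the $\omega$-pieces are absorbed via reverse H\"older into $\langle\omega\rangle_I\langle\sigma\rangle_J^{p-1}\lesssim [\omega]_{\Ahat_p(\mu)}$, while the exponential pieces collapse to a power of $[e^{p\gamma'\,\mathrm{Re}(z)b}]_{\Ahat_p(\mu)}$, which is $O(1)$ provided $\varepsilon$ is small enough to satisfy the hypothesis of Lemma \ref{sufficient}. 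This is exactly the computation displayed at the end of the proof of Theorem \ref{th:theoremC}, adapted to general $p$.

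It remains to convert the uniform $\Ahat_p(\mu)$-bound into a uniform $A_p^{bal}(\mu)$-bound, which is the hypothesis required to invoke the Haar shift bound of \cite{CAPW}. This is the analog of the appeal to Proposition \ref{AvgFather} in the proof of Theorem \ref{th:theoremC}; under the balanced assumption on $\mu$, one has $\Ahat_p(\mu)\subseteq A_p^{bal}(\mu)$, with constants controlled by the balanced constant of $\mu$, because the sibling of a child is itself a child of the common parent and $m(I)\sim m(\widehat I)\sim \mu(I)/|\text{children count}|$ in the balanced regime. Once this embedding is in hand, the weighted $L^p$-bound for $T$ applies uniformly for $|z|=\varepsilon$ and, combined with the displayed Minkowski estimate and the choice $\varepsilon^{-1}\lesssim \|b\|_{\BMO(\mu)}$, yields the desired inequality. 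The main technical point, and the only one that requires care beyond routine modification of the proof of Theorem \ref{th:theoremC}, is the embedding $\Ahat_p(\mu)\hookrightarrow A_p^{bal}(\mu)$ under the balanced hypothesis; everything else is a cosmetic replacement of $\Hd$ by $T$ throughout.
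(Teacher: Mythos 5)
Your proposal is correct and follows essentially the same approach the paper intends; the paper explicitly remarks that the proof of Proposition \ref{CommutatorWeighted} is ``very similar'' to that of Theorem \ref{th:theoremC}, and your Cauchy integral argument, the reverse H\"older step via Theorem \ref{th:theoremB}\textbf{(b)}, and the passage from $\widehat{A}_p(\mu)$ to $A_p^{bal}(\mu)$ are exactly the intended details. One small clarification: the embedding $\widehat{A}_p(\mu)\subseteq A_p^{bal}(\mu)$ under the balanced hypothesis that you flag as the main technical point is already established in the paper via Proposition \ref{AvgFather} together with the identity $A_p^{sib}(\mu)=A_p^{bal}(\mu)$ for balanced measures, so no additional argument is needed there.
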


\subsection{Comparison between different weight classes}\label{subseccompareweights}

\begin{prop}If $\mu$ is balanced, there holds
\begin{equation}\label{SibBalEquiv}
A_p^{sib}(\mu)=A_p^{bal}(\mu).
\end{equation}
\begin{proof}
First, notice for the configurations $I=J$, $I=(\widehat{J})^s$ or $J= (\widehat{I})^s$, the balanced condition gives $c_p(I,J) \sim \frac{m(I)^{p/2} m(J)^{p/2}}{\mu(I)^{p-1} \mu (J) }$, with constant depending on $[\mu]_{bal}.$ This observation proves $A_p^{sib}(\mu) \subseteq A_p^{bal}(\mu).$ To show the reverse containment, it remains to check the additional configuration where $\widehat{I}= \widehat{J}^s$ that arises in the $A_p^{sib}$ condition. We can bound this term via a case analysis. If $\mu(\hat{I})\leq 2 \mu(I)$, then we can estimate, since $m(\hat{J}) \sim m(J)$ as well:

\begin{align*}
c_p(I, J) \langle \omega \rangle_I  \langle \sigma \rangle_{J}^{p-1} & \lesssim \frac{m(\hat{I})^{p/2} m(J)^{p/2}}{\mu(\hat{I})^{p-1} \mu(J)} \langle \omega \rangle_{\hat{I}} \langle \sigma \rangle_{J}^{p-1}\\
& \leq [\omega]_{A_p^{bal}(\mu)}.
\end{align*}

If $\mu(\hat{J}) \leq 2 \mu(J)$ instead, we follow a similar argument. If neither of these inequalities hold, we have $\mu(I) \sim m(\hat{I}) \sim m(\hat{J}) \sim \mu(J)$. Let $I^g$ denote the dyadic parent of $\widehat{I}.$ Then since $m(\hat{I})$ is comparable to $m(I^g)$, which is in turn comparable to $\min \{ \mu(\hat{I}), \mu(\hat{J})\},$ this implies either $\mu(\hat{I}) \sim \mu(I)$ or $\mu(\hat{J}) \sim \mu(J) $, with implicit constant only depending on $[\mu]_{bal}.$ Then we can argue as above. 
\end{proof}
\end{prop}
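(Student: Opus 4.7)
The plan is to verify both inclusions $A_p^{sib}(\mu) \subseteq A_p^{bal}(\mu)$ and $A_p^{bal}(\mu) \subseteq A_p^{sib}(\mu)$ by a case analysis on the dyadic pairs $(I,J)$ that index each characteristic, with the balanced assumption providing comparability of the relevant $m$-quantities. Both characteristics have the form $\sup c_p(I,J)\,\langle\omega\rangle_I\langle\sigma\rangle_J^{p-1}$ for a $\mu$-dependent coefficient $c_p(I,J)$, so the problem reduces to comparing coefficients together with the ranges of configurations.

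For $A_p^{sib}(\mu) \subseteq A_p^{bal}(\mu)$, I would check that in each $A_p^{sib}$ configuration ($J=I$, $I=\widehat{J}^s$, or $J=\widehat{I}^s$) the balanced hypothesis yields $c_p(I,J)\sim m(I)^{p/2}m(J)^{p/2}/(\mu(I)^{p-1}\mu(J))$, which is exactly the $A_p^{bal}$ coefficient. Because these configurations either belong to the $A_p^{bal}$ indexing family or are reached from it by a single parent/sibling move, the $A_p^{sib}$ supremum is then dominated by $[\omega]_{A_p^{bal}(\mu)}$, with an implicit constant depending on $[\mu]_{bal}$.

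For the reverse inclusion, the new configurations to control are those which appear in $A_p^{sib}$ but not in $A_p^{bal}$, most delicately the "cousin" relation $\widehat{I}=\widehat{J}^s$. My approach is a dichotomy on ancestor masses: if $\mu(\widehat{I})\lesssim \mu(I)$, then $\langle\omega\rangle_I$ and $\langle\omega\rangle_{\widehat{I}}$ are comparable, and the pair $(\widehat{I},J)$ is a valid $A_p^{bal}$ configuration, yielding the desired bound. The symmetric case $\mu(\widehat{J})\lesssim\mu(J)$ is handled analogously on the $\sigma$ side. In the remaining case, balancing at the common grandparent $\widehat{\widehat{I}}=\widehat{\widehat{J}^s}$ forces $m(\widehat{I})\sim m(\widehat{J})\sim \min\{\mu(\widehat{I}),\mu(\widehat{J})\}$, which combined with the failure of both previous alternatives propagates to $\mu(I)\sim\mu(J)$, reducing back to the earlier sub-cases.

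The principal obstacle is precisely this last off-diagonal case, where no single ancestor affords a free doubling comparison in $\mu$-mass, and the only usable information travels through the auxiliary quantities $m(\cdot)$ via balancing. The technical task is to keep careful track of when an average $\langle\omega\rangle_I$ or $\langle\sigma\rangle_J^{p-1}$ can be swapped for the corresponding average over a parent without incurring an unbounded penalty, and to organize the cases so that every $A_p^{sib}$ configuration is controlled by some $A_p^{bal}$ configuration after at most one such swap.
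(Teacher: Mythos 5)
Your proposal is correct and follows essentially the same argument as the paper: match the $A_p^{sib}$ and $A_p^{bal}$ coefficients on the shared configurations via the balanced condition, then handle the cousin configuration $\widehat{I}=\widehat{J}^s$ by a dichotomy on whether $\mu(\widehat{I})\lesssim\mu(I)$ or $\mu(\widehat{J})\lesssim\mu(J)$, with the remaining case resolved by balancing at the common grandparent. The only small imprecision is at the very end: what actually closes the residual case is $\mu(I)\sim m(\widehat{I})\sim\min\{\mu(\widehat{I}),\mu(\widehat{J})\}$, which forces one of the two earlier alternatives to hold up to a $[\mu]_{bal}$-dependent constant, rather than the weaker statement $\mu(I)\sim\mu(J)$ that you wrote.
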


\begin{prop} \label{AvgFather}

Let $\mu$ be sibling balanced. The containment $$\widehat{A}_p(\mu) \subseteq A_p^{sib}(\mu)$$ holds, and in particular for $\omega \in \widehat{A}_p(\mu)$,
$$[\omega]_{A_p^{sib}(\mu)} \lesssim [\omega]_{\widehat{A}_p(\mu)}^3, $$
where the implicit constant is independent of $\omega, \mu.$
Therefore, if $\mu$ satisfies the stronger balanced condition, we get the corresponding result for $A_p^{bal}(\mu)$ in place of $A_p^{sib}(\mu).$

\begin{proof}
Let $w \in \widehat{A}_p(\mu).$ Since $c_p(I,J) \lesssim 1$ always, to verify \eqref{A2b} it suffices to show 
\begin{equation} \sup_{\substack{I, J \in \D:\\ J \in \{I, I_{-}^s, I_{+}^s\} \text{ or }\\ I \in \{J,J_{-}^s, J_{+}^s\} \text{ or } \\ \widehat{I}=\widehat{J}^s}}  \langle \omega \rangle_{I} \langle \sigma \rangle_{J}^{p-1}< \infty.\end{equation}
We will only verify the case corresponding to $\widehat{I}=\widehat{J}^s$; the other cases can be handled by a similar argument.  We have, using H\"{o}lder's inequality:

\begin{align*}
\langle \omega \rangle_{I} \langle \sigma \rangle_{J}^{p-1} & \leq \langle \omega \rangle_{I} \langle \sigma \rangle_{\widehat{I}}^{p-1} \langle \omega \rangle_{\widehat{I}} \langle \sigma \rangle_{J}^{p-1}\\
& \leq [\omega]_{\widehat{A}_p(\mu)} \langle \omega \rangle_{\widehat{I} } \langle \sigma \rangle_{J}^{p-1}\\
& \leq [\omega]_{\widehat{A}_p(\mu)} \langle \omega \rangle_{\widehat{I} } \langle \sigma \rangle_{\widehat{I}^s}^{p-1} \langle \omega \rangle_{\widehat{I}^s}  \langle \sigma \rangle_{J}^{p-1}\\
& \leq  [\omega]_{\widehat{A}_p(\mu)}^2 \langle \omega \rangle_{\widehat{I} } \langle \sigma \rangle_{\widehat{I}^s}^{p-1}.
\end{align*}

Now, if $I^{g}$ denotes the dyadic parent of $\widehat{I}$, we know that either $\mu(I^g) \leq 2 \mu(\widehat{I})$ or $\mu(I^g) \leq 2 \mu(\widehat{I}^s).$ In the former case, we have $\langle \omega \rangle_{\widehat{I}} \leq 2 \langle \omega \rangle_{I^g}$, while in the latter, we get $\langle \omega \rangle_{\widehat{I}^s}^{p-1} \lesssim 2^{p-1} \langle \omega \rangle_{I^g}^{p-1} .$ In either case, the last line in the display above is controlled by $2^{\max\{1, p-1\}} [\omega]_{\widehat{A}_p(\mu)}^3$, which completes the proof. 
\end{proof}
    
\end{prop}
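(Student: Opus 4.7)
The plan is to verify the $A_p^{sib}(\mu)$ condition one configuration at a time. A preliminary step is to observe that the prefactor $c_p(I,J)$ defining $A_p^{sib}(\mu)$ is uniformly bounded by a constant in every relevant configuration (this follows from the elementary bound $m(I)\le\mu(I)$ and a short case check), so the task reduces to controlling $\langle\omega\rangle_I\langle\sigma\rangle_J^{p-1}$ uniformly over the pairs $(I,J)$ that appear in the supremum.

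For the configurations that already occur in the $\widehat{A}_p(\mu)$ supremum -- namely $J=I$, $J$ a child of $I$, or $I$ a child of $J$ -- the desired bound is immediate from the definition of $\widehat{A}_p(\mu)$ and yields a factor of $[\omega]_{\widehat{A}_p(\mu)}$. The only genuinely new configuration is the ``cousin'' pair in which $\widehat{I}$ and $\widehat{J}$ are siblings, i.e.\ $\widehat{I}=\widehat{J}^s$. My idea here is to chain averages via the trivial Hölder lower bound $1\le \langle\omega\rangle_K\langle\sigma\rangle_K^{p-1}$ (valid for every $K\in\D$), inserting the intermediate intervals $\widehat{I}$ and $\widehat{I}^s=\widehat{J}$ to split
\[
\langle\omega\rangle_I\langle\sigma\rangle_J^{p-1}\le\bigl(\langle\omega\rangle_I\langle\sigma\rangle_{\widehat{I}}^{p-1}\bigr)\bigl(\langle\omega\rangle_{\widehat{I}}\langle\sigma\rangle_{\widehat{I}^s}^{p-1}\bigr)\bigl(\langle\omega\rangle_{\widehat{I}^s}\langle\sigma\rangle_J^{p-1}\bigr).
\]
The outer two factors are parent-child pairs in $\widehat{A}_p(\mu)$ and are each controlled by $[\omega]_{\widehat{A}_p(\mu)}$.

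The main obstacle is the middle factor, which involves the sibling pair $(\widehat{I},\widehat{I}^s)$ and is \emph{not} directly covered by $\widehat{A}_p(\mu)$. To handle it, I plan to bring in the common parent $I^g$ of $\widehat{I}$ and $\widehat{I}^s$ and argue by cases: since $\widehat{I}$ and $\widehat{I}^s$ partition $I^g$, at least one of them has $\mu$-measure $\ge\mu(I^g)/2$. In the case $\mu(\widehat{I})\gtrsim\mu(I^g)$ the average $\langle\omega\rangle_{\widehat{I}}$ is comparable to $\langle\omega\rangle_{I^g}$, so the middle factor reduces to the parent-child pair $(I^g,\widehat{I}^s)$, which is in $\widehat{A}_p(\mu)$ and therefore contributes another $[\omega]_{\widehat{A}_p(\mu)}$. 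The symmetric case (when $\widehat{I}^s$ is the larger child) is handled analogously on the $\sigma$-side. Multiplying the three factors gives $[\omega]_{A_p^{sib}(\mu)}\lesssim[\omega]_{\widehat{A}_p(\mu)}^3$, and the corresponding assertion for $A_p^{bal}(\mu)$ in the balanced case follows from \eqref{SibBalEquiv}.

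I expect the cousin case just discussed to be the only real difficulty; all the other configurations will be essentially bookkeeping. It is worth noting that the sibling balanced hypothesis plays no essential role in this argument beyond being the standing assumption of the section: only the dyadic structure and Hölder's inequality are really used.
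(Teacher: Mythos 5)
Your proposal is correct and takes essentially the same route as the paper's proof: after noting $c_p(I,J)\lesssim 1$ unconditionally, you reduce the new ``cousin'' configuration $\widehat{I}=\widehat{J}^s$ to parent--child pairs by inserting $1\le\langle\omega\rangle_K\langle\sigma\rangle_K^{p-1}$ at $K=\widehat{I}$ and $K=\widehat{I}^s$, and then dispose of the remaining sibling factor $\langle\omega\rangle_{\widehat{I}}\langle\sigma\rangle_{\widehat{I}^s}^{p-1}$ by the dichotomy on which of $\widehat{I},\widehat{I}^s$ carries at least half the mass of $I^g$ — exactly the paper's argument. One small imprecision: when $\mu(\widehat{I})\gtrsim\mu(I^g)$ you only obtain the one-sided bound $\langle\omega\rangle_{\widehat{I}}\lesssim\langle\omega\rangle_{I^g}$, not full comparability, but that is all the argument requires, so the proof stands.
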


The next result is substantially more delicate, and demonstrates that for general balanced $\mu$, the containment $\Ahat_2(\mu) \subset A_2^{bal}(\mu)$ may be strict (even if $\mu$ is assumed to be atomless). Therefore, $\Ahat_2(\mu)$ defines a genuinely different class of weights in the category of atomless balanced measures.

\begin{thm} \label{counterexample}
There exists a balanced, atomless measure $\mu$ and weight $\omega \in A_2^{bal}(\mu)$ such that $\omega \notin \Ahat_2(\mu).$ Consequently, we have the strict containment $\widehat{A}_2(\mu) \subsetneq A_2^{bal}(\mu)$ for this particular measure. 

\begin{proof}
Fix an even positive integer $n \geq 2$. We will construct a preliminary measure $\mu_n$ and weight function $\omega_n$ on $[0,1]$ so that $\mu_n$ is balanced with $[\mu_n]_{bal} \sim 1$, $[\omega_n]_{\Ahat_2} \gtrsim \sqrt{n}$, but $[\omega_n]_{A_2^{bal}(\mu_n)} \lesssim 1.$ Then, we will show how these sequences of measures and weight can be used to build $\mu, \omega$ satisfying the conclusion of the theorem. 

Set $I_k=[0, 2^{-k}),$ and let $I_k^s=[2^{-k}, 2^{-k+1})$ denote its dyadic sibling. We set $\mu_n(I_0)=1$, $\mu_n(I_1)=\mu_n(I_1^s)=\frac{1}{2}$, and define sequences
$$b_k= \begin{cases} 
\frac{1}{k^2}, &2 \leq k \leq n;\\
\frac{1}{n^2}, & k>n.
\end{cases}, \, \text{and}, \quad a_k=1-b_k.$$ For $k \geq 2$ we then inductively define  $\mu_n(I_k)= a_k \mu_n(I_{k-1})$, so that $\mu_n(I_k^s)=b_k \mu_n(I_{k-1}).$ This construction defines a Borel measure $\mu$ on $[0,1]$ which is uniform on each interval $I_k^s$ (see \cite{LSMP} or \cite{CAPW}).

On account of the convergence of the infinite product $\prod_{j=2}^{\infty}(1-\frac{1}{j^2})$ to a nonzero value, one can check that $\mu_n(I_k) \sim 1$ for all $k \leq n$ and $\mu_n(I_k^s) \sim \frac{1}{k^2} $ for the same range of $k$, with implied constant independent of $n$. Moreover, for $k \leq n$ we also have  $$ m(I_k) \sim m(I_k^s) \sim m(\widehat{I_k}) \sim \frac{1}{k^2}.$$ 

On the other hand, notice for $k>n$, $\mu_n(I_k) \sim (\frac{n^2-1}{n^2})^{k-n}$ while $\mu_n(I_k^s) \sim (\frac{1}{n^2}) (\frac{n^2-1}{n^2})^{(k-n-1)}$. In this case, 

$$ m(I_k) \sim m(I_k^s) \sim m(\widehat{I_k}) \sim \left(\frac{1}{n^2}\right) \left(\frac{n^2-1}{n^2}\right)^{(k-n-1)}.$$ 
We only need to check such intervals to verify the balanced condition because $\mu_n$ is uniform on $I_k^s$ for all $k.$ Moreover, since $\mu(I_k) \rightarrow 0$ as $k \rightarrow \infty$, $\mu$ is atomless. 

Next, we define the relevant weight function $\omega_n$ on $[0,1].$ Define $$\omega_n(x)= \begin{cases} \frac{1}{\sqrt{k}}, & x \in I_k^s, k \leq n/2 \\ \frac{1}{\sqrt{n-k+1}}, & x \in I_k^s, n/2<k\leq n \\ 1,  & x \in I_k^s, k>n. \end{cases}$$
We first claim that $[\omega_n]_{\Ahat_2} \gtrsim \sqrt{n}.$ To see this, note that
\begin{align*}
\langle \omega_n^{-1} \rangle_{I_{n/2}^s} & = \sqrt{n/2},
\end{align*}
while on the other hand, we have
$$\langle \omega_n \rangle_{\widehat{I_{n/2}}} \gtrsim 1,$$ since $\mu_n(\widehat{I_{n/2}}) \sim 1$ and $\int_{\widehat{I_{n/2}}} \omega_n \, d \mu_n \geq \mu_n(I_n) \sim 1.$
This proves the desired estimate on $[\omega_n]_{\Ahat_2}$.

Next, we assert $\omega_n \in A_2(\mu_n)$ with $[\omega_n]_{A_2(\mu_n)} \sim 1.$ It is obvious that since $\omega_n$ is constant on $I_k^s$ for all $k,$ and $I_{k}$ with $k>n$, we only need to check the $A_2$ condition on intervals of the form $I_k$ with $k \leq n.$ As before, $\langle \omega_n \rangle_{I_k} \leq 1,$ and we also see
\begin{align*}
\langle \omega_n^{-1} \rangle_{I_k} & \lesssim \int_{I_k} \omega_n^{-1} \, d \mu_n\\
& = \sum_{\substack{j > k:\\j \leq n/2}} \sqrt{j} \mu_n(I_j^s) + \sum_{\substack{j > k:\\n/2<j\leq n}} \sqrt{n-j+1} \mu_n(I_j^s) + \sum_{j > n} \mu_n(I_j^s) \\
& \lesssim  \sum_{j \geq 1 } \frac{1}{j^{3/2}} + \frac{n \sqrt{n}}{n^2}+ \sum_{j \geq 1 } \frac{1}{n^2} \left(\frac{n^2-1}{n^2}\right)^{j} \lesssim 1,
\end{align*}
so $\omega_n \in A_2(\mu_n). $    

Finally, we need to check the additional conditions for membership in $A_2^{bal}(\mu_n).$ We need to control 
$$ \frac{m(I) m(J)}{\mu_n(I)\mu_n(J) } \langle \omega_n \rangle_{I} \langle \omega_n^{-1} \rangle_{J}   $$ for the combinations
$I=I_k^s, J=I_{k+1}^s, \quad I= I_k^s, J=I_{k+1}, \quad I=I_k, J= I_{k,-}^s,$ and  $\quad I=I_k, J= I_{k,+}^s$ as well as the combinations with $I$ and $J$ interchanged in this list. For all of the combinations, we can assume $k \leq n$, for otherwise $\omega_n$ is identically constant on the two intervals. 

Note that $\langle \omega_n \rangle_{I} \leq 1$ for any dyadic interval $I$, and by our earlier computations, $\langle \omega_n^{-1} \rangle_{J} \lesssim 1 $ unless $J=I_k^s$ with $1 \leq k \leq n$ or $J$ is a child of $I_k^s$. Then we estimate for the case $J=I_k^s$ and $I=I_{k+1}^s$:
\begin{align*}
\frac{m(I_k^s) m(I_{k+1}^s)}{\mu_n(I_{k}^s) \mu_n (I_{k+1}^s) } \langle \omega_n\rangle_{I_{k+1}^s} \langle \omega_n^{-1} \rangle_{I_{k}^s} & \lesssim \langle \omega_n\rangle_{I_{k+1}^s} \langle \omega_n^{-1} \rangle_{I_{k}^s} \\ & \lesssim 1,
\end{align*}
while in the case $I=I_{k+1}$ instead,
\begin{align*}
\frac{m(I_k^s) m(I_{k+1})}{\mu_n(I_{k}^s) \mu_n (I_{k+1}) } \langle \omega_n\rangle_{I_{k+1}} \langle \omega_n^{-1} \rangle_{I_{k}^s} & \lesssim \frac{ m(I_{k+1})}{\mu_n (I_{k+1}) } \langle \omega_n^{-1} \rangle_{I_{k}^s} \\ & \lesssim \frac{1}{k^2} \cdot \sqrt{k} \lesssim 1.
\end{align*}
The cases when $J$ is a child of $I_k^s$ may be handled similarly. These estimates prove $\omega_n \in A_2^{bal}(\mu_n)$ with $[\omega_n]_{A_2^{bal}} \sim 1$, which completes the proof of the claim.

Now, to construct a single measure $\mu$ and weight function $\omega$ satisfying the conclusion of the lemma, we define $\mu$ on the half line $[0, \infty)$ using the $\mu_n$. In particular, let $u_n$ be the density function for $\mu_{2(n+1)}$ (which is absolutely continuous with respect to Lebesgue measure) as constructed above for $n \geq 1$ on $[0,1]$, and define $\mu$ according to the density

$$ d \mu(x)= u(x) \, dx, \quad u(x)= u_0(x)+ \sum_{n=1}^{\infty} f_n(x),$$
where $u_0(x)$ is the positive density function on $[0,1]$ of the measure $\mu_2$, and for $n \geq 1$, $f_n$ is the positive function supported on $[2^{n-1}, 2^{n})$ defined by 
$$ f_n(x)= u_n\left(\frac{x-2^{n-1}}{2^{n-1}}\right).$$ It is a simple matter to check that $\mu$ is balanced, since each $\mu_n$ is balanced with $[\mu_n]_{bal} \sim 1.$  Next, we build $\omega$ from the weights $\omega_n$ appropriately. In particular, we can define 

$$ \omega(x)= \omega_2(x)+ \sum_{n=1}^{\infty} \nu_n(x),$$
where $$ \nu_n(x)= \omega_{2(n+1)}\left(\frac{x-2^{n-1}}{2^{n-1}}\right), n\geq 1.$$
From the computations above, it is easy to see $\omega \notin \Ahat_2(\mu)$, but $\omega \in A_2^{bal}(\mu).$ This completes the construction.

\end{proof}
\end{thm}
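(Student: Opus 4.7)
The plan is to exhibit a gap between the two classes by exploiting a structural asymmetry: $A_2^{bal}(\mu)$ carries the prefactor $m(I)m(J)/(\mu(I)\mu(J))$, whereas $\Ahat_2(\mu)$ compares $\langle\omega\rangle_I$ with $\langle\sigma\rangle_{\widehat I}$ directly, without any $m$-factor. If $\mu$ is balanced yet $m(I)/\mu(I)$ is very small along a long dyadic chain, then the $A_2^{bal}$ prefactor supplies slack that $\Ahat_2$ simply does not see. This suggests targeting a cascade of siblings along a left chain where the sibling mass decays fast enough to force $m/\mu\to 0$, coupled with a weight whose reciprocal $\sigma$ is concentrated on those thin siblings.

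First I would build a family of local examples $(\mu_n,\omega_n)$ on $[0,1]$ parameterized by large even $n$. Let $I_k=[0,2^{-k})$ and define $\mu_n$ inductively by $\mu_n(I_k)=(1-1/k^2)\mu_n(\widehat{I_k})$ for $2\le k\le n$ (so the sibling $I_k^s$ receives the fraction $1/k^2$ of $\mu_n(\widehat{I_k})$), stabilizing to the constant ratio $1-1/n^2$ for $k>n$. Convergence of $\prod_{k\ge 2}(1-1/k^2)$ to a positive number gives $\mu_n(I_k)\sim 1$ and $\mu_n(I_k^s)\sim 1/k^2$ for $k\le n$, whence $m(I_k)\sim m(\widehat{I_k})\sim 1/k^2$; the analogous computation for $k>n$ shows that $\mu_n$ is balanced with constant independent of $n$, and atomless since $\mu_n(I_k)\to 0$. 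For the weight, set $\omega_n\equiv 1/\sqrt{k}$ on $I_k^s$ for $k\le n/2$, taper symmetrically back to $1$ on $I_k^s$ for $n/2<k\le n$, and $\omega_n\equiv 1$ otherwise. Then the parent-child pair $(I_{n/2}^s,\widehat{I_{n/2}})$ forces $\langle\omega_n^{-1}\rangle_{I_{n/2}^s}\sim\sqrt{n}$ while the tail contribution keeps $\langle\omega_n\rangle_{\widehat{I_{n/2}}}\gtrsim 1$, witnessing $[\omega_n]_{\Ahat_2(\mu_n)}\gtrsim\sqrt{n}$.

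The heart of the verification is that $[\omega_n]_{A_2^{bal}(\mu_n)}$ stays $O(1)$ uniformly in $n$. For pure sibling-type configurations such as $(I_k^s,I_{k+1}^s)$ or $(I_k^s,I_{k+1})$, the prefactor $m(I)m(J)/(\mu(I)\mu(J))$ is of order $1/k^2$, which absorbs the $\sqrt{k}$ growth of $\langle\omega_n^{-1}\rangle$; for the diagonal $A_2(\mu_n)$ piece on $I_k$, one reduces to $\int_{I_k}\omega_n^{-1}\,d\mu_n\lesssim \sum_{j\ge k}\sqrt{j}\,\mu_n(I_j^s)\lesssim \sum j^{-3/2}<\infty$. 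The last step is globalization: rescale $(\mu_n,\omega_n)$ onto $[2^{n-1},2^n)$ and add, with a benign extension to the complement. Because sibling configurations never cross top-level dyadic blocks, the uniform local estimates assemble into a single balanced, atomless $\mu$ with $\omega\in A_2^{bal}(\mu)\setminus\Ahat_2(\mu)$. The main obstacle is the case analysis for the $A_2^{bal}$ condition over \emph{all} sibling-type pairs, including children-of-sibling and parent-of-sibling combinations, each controlled uniformly in $n$ using the $m$-prefactor to kill the $\sqrt{k}$ blowup.
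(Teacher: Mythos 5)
Your proposal is correct and follows essentially the same approach as the paper: the same measure $\mu_n$ with sibling mass fraction $1/k^2$, the same sawtooth weight $\omega_n\sim 1/\sqrt{k}$ on $I_k^s$ tapering back to $1$, the same parent-child pair at depth $n/2$ to force $[\omega_n]_{\Ahat_2}\gtrsim\sqrt{n}$, the same $m$-prefactor absorption to bound $[\omega_n]_{A_2^{bal}}$ uniformly, and the same globalization by rescaling the $\mu_n$'s onto disjoint dyadic blocks $[2^{n-1},2^n)$.
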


\appendix
\section{Sparse bounds and weighted estimates for  \texorpdfstring{$\Hd$}{Lg}} \label{sec:app}

If $\mu$ is sibling balanced, then as discussed in Proposition \ref{weak11dyadicH}, $\Hd$ and $\Hd^*$ are of weak-type $(1,1)$, and hence $\Hd$ is bounded on $L^p(\mu)$, for all $1<p<\infty$. Moreover, that condition on the measure is necessary for $\mathcal{H}$ to be bounded in some $L^{p}(\mu)$ with $p\neq 2$. Then, in the context of atomless, sibling measures, we shall show that $\Hd$ satisfies a sparse bound in the spirit of the one proved in \cite[Lemma 2.4]{CAPW}. We will also state the weighted inequalities that descend from the sparse domination, which in particular imply the unweighted $L^p(\mu)$-bounds for $\mathcal{H}$.

\subsection{Sparse bounds for \texorpdfstring{$\Hd$}{Lg}}
We follow \cite{CAPW} rather closely here, so we will skip some details. Let $0\leq f_1,f_2\in L^{1}(\mu)$ supported in $I\in  \mathcal{\D}$. Using \cite[Lemma 2.2]{CAPW}, we can perform a Calder\'on-Zygmund decomposition at two different heights $\lambda_1,\lambda_2$ simultaneously for $f_1,f_2$, so that we find $g_j,b_j$ for $j=1,2$ with the following properties:
     \begin{enumerate}
         \item\label{CZbadpieces} There exist pairwise disjoint dyadic intervals $\{I_k\}\subset \D(I)$ so that for each $j=1,2$, $b_j=\sum_k b_{j,k}$ where $\text{supp}(b_{j,k})\subset \hat{I}_k$, $\int_{\hat{I}_{j,k}} b_{j,k}=0$ and $\|b_{j,k}\|_{L^{1}(\mu)}\lesssim \int_{I_k} f_j d\mu$. Moreover,
    $$b_{j,k}=f_j\mathbbm{1}_{I_k}-\langle f_j \mathbbm{1}_{I_k}\rangle_{\hat{I}_k}\mathbbm{1}_{\hat{I}_k}.$$
    \item\label{CZLpcontrol} For $j=1,2$, one has $g_j\in L^{p}(\mu)$ for all $1\leq p<\infty$ with $\|g_j\|_{L^{p}(\mu)}^p\leq C_p \lambda_j^{p-1}\|f_j\|_{L^{1}(\mu)}.$
    \item\label{CZBMOcontrol} For $j=1,2$, one has $g_j\in BMO(\mu)$ and $\|g_j\|_{\BMO(\mu)}\leq \lambda_j$.
     \end{enumerate}
   Fix an interval $I_0\in \D$. For any $I\subset I_0$, we can apply the Calderón Zygmund decomposition above to the functions $f_1\one_I$ and $f_{2}\one_I$ at heights $\lambda_i=16 \langle f_{i}\rangle_I$ to get a collection of pairwise disjoint intervals $\mathcal{B}(I)\subset \D(I)$. We set
\begin{align*}
    \mathcal{B}_1(I)=\cup_{J\in \mathcal{B}(I)}\mathcal{B}(J),\, \mathcal{B}^{1}(I)=\mathcal{B}(I)\cup \mathcal{B}_1(I), \\
    \mathcal{G}(I)=\{J\in \D(I)\colon J\not\subset K \text{ for any }K\in \mathcal{B}(I)\}.
\end{align*}
For each $J\in \mathcal{G}(I)$ we have 
$\langle f_i\rangle_{K}\leq \lambda_i\,\text{ for all }J\subseteq K\subseteq I,\,i=1,2$.

\begin{lemma}[Iteration step] \label{lem:iterStep}
We have  
\begin{align*}
| \sum_{J\in \mathcal{G}(I)} &\fonehaarJplus \ftwohaarJminus-\fonehaarJminus\ftwohaarJplus| \lesssim  [\mu]_{sib}^{1/2}\avfoneI \avftwoI \mu(I) \\
& +\sum_{\substack{S,T\in \mathcal{B}^{1}(I):\\  \hat{S}=(\hat{T})^s}}  \avfoneS \avftwoT m(\hat{S})^{1/2}{m(\hat{T})}^{1/2} + \sum_{\substack{S,T\in \mathcal{B}(I):\\  {S}=T^s}}  \avfoneS \avftwoT m({S})^{1/2}{m({T})}^{1/2}\\
& + \sum_{\substack{S\in \mathcal{B}(I),T\in \mathcal{B}_1(I) \\ \text{ or }S,T\in \mathcal{B}(I),\\ \text{ and } S=(\hat{T})^s}}  (\avfoneS \avftwoT+\langle f_1\rangle_T\langle f_2\rangle_{S}) m({S})^{1/2}{m(\hat{T})}^{1/2}.
\end{align*}

\end{lemma}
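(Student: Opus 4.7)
The plan is to apply the Calder\'on--Zygmund decomposition at heights $\lambda_i = 16 \langle f_i\rangle_I$ to split $f_i = g_i + \sum_{K \in \mathcal{B}(I)} b_{i,K}$ as in \eqref{CZbadpieces}--\eqref{CZBMOcontrol}, bilinearly expand the summand $S_J := \langle f_1, h_{J_+}\rangle\langle f_2, h_{J_-}\rangle - \langle f_1, h_{J_-}\rangle\langle f_2, h_{J_+}\rangle$ using $f_i = g_i + b_i$, and estimate each of the four resulting pieces separately. The good-good sum produces the main diagonal term, while the bad-bad and mixed sums give rise to the three error sums, each corresponding to a specific sibling configuration of bad intervals that is forced by the shift structure of $\Hd$.

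\textbf{Good-good piece.} The partial Haar shift $P_{\mathcal G} f := \sum_{J\in \mathcal G(I)}[\langle f, h_{J_+}\rangle h_{J_-} - \langle f, h_{J_-}\rangle h_{J_+}]$ satisfies $\|P_{\mathcal G}\|_{L^2(\mu)\to L^2(\mu)} \leq 1$ by Bessel's inequality, since each Haar function $h_K$ with $\widehat{K} \in \mathcal G(I)$ appears with a single coefficient $\pm\langle f, h_{K^s}\rangle$ in $P_{\mathcal G}f$. Hence the $(g_1, g_2)$ contribution equals $\langle P_{\mathcal G} g_1, g_2\rangle$ and is bounded by $\|g_1\|_{L^2(\mu)}\|g_2\|_{L^2(\mu)} \lesssim \langle f_1\rangle_I \langle f_2\rangle_I \mu(I)$ via \eqref{CZLpcontrol}. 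The factor $[\mu]_{\mathrm{sib}}^{1/2}$ is not picked up here; it will appear only in the mixed-piece absorption step.

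\textbf{Bad and mixed pieces.} The decisive calculation is that $\langle b_{i,K}, h_L\rangle$ vanishes unless $L \subseteq \widehat{K}$, with explicit values $\pm\sqrt{m(\widehat K)}\langle f_i\rangle_K$ when $L = \widehat{K}$ and $\langle f_i, h_L\rangle$ when $L \subseteq K$. Restricting $J$ to $\mathcal G(I)$, the only values of $J$ for which $\langle b_{i,K}, h_{J_\pm}\rangle\neq 0$ are the grandparent $J = \widehat{\widehat K}$ (with $J_\pm = \widehat K$) or the parent $J = \widehat K$ (with $J_\pm = K$), for some $K \in \mathcal B(I)$. The bad-bad sum thus collapses to pairs $(S,T)$ with common grandparent $\widehat S = (\widehat T)^s$, weighted by $\sqrt{m(\widehat S) m(\widehat T)}\langle f_1\rangle_S\langle f_2\rangle_T$, giving the first error sum (the enlargement to $\mathcal B^1(I)$ accounts for an iterated CZ inside the first-level bad intervals when $\langle f_i, h_K\rangle$ is further decomposed). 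For the $(g_1, b_2)$ piece (and symmetrically $(b_1, g_2)$), each $T \in \mathcal B(I)$ contributes $\pm\sqrt{m(\widehat T)}\langle f_2\rangle_T \langle g_1, h_{(\widehat T)^s}\rangle$ from the grandparent case plus $\pm\langle g_1, h_{T^s}\rangle \langle f_2, h_T\rangle$ from the parent case. Expanding these Haar coefficients via $g_1 = f_1 - b_1$ and analyzing the two children: when both are good, the averages are bounded by $\lambda_1 \sim \langle f_1\rangle_I$ and the sibling-balanced inequality $m((\widehat T)^s)\leq [\mu]_{\mathrm{sib}} m(\widehat T)$ absorbs the contribution into the diagonal, since $\sum_{T\in\mathcal B(I)} m(\widehat T) \leq \sum_T \mu(T)\leq \mu(I)$; this is precisely where the $[\mu]_{\mathrm{sib}}^{1/2}$ factor originates. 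When instead the sibling $T^s$ is itself bad one recovers the third error sum with $S = T^s$, and when expanding $\langle f_2, h_T\rangle$ reveals a second-level bad interval $T'\in \mathcal B_1(I)$ one obtains the fourth error sum with $S = (\widehat{T'})^s$.

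\textbf{Principal obstacle.} The chief technical difficulty is the systematic bookkeeping above: once the Haar coefficients of $g_1$ and $b_i$ are expanded into averages, one must carefully sort out which configurations of $S, T$ contribute and verify that every non-absorbed term carries exactly the weight $m(\widehat S)^{1/2}m(\widehat T)^{1/2}$ or $m(S)^{1/2}m(\widehat T)^{1/2}$ dictated by the Haar functions involved. The non-doubling of $\mu$ prevents the usual shortcut of replacing $m$-factors by volumes, so the sibling-balanced constant $[\mu]_{\mathrm{sib}}^{1/2}$ is the sole available tool and must be deployed exactly once---in the good-good absorption step for the mixed pieces---to recover the diagonal term without polluting the error sums.
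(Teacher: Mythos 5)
Your overall strategy matches the paper: apply the simultaneous Calder\'on--Zygmund decomposition at heights $\lambda_i=16\langle f_i\rangle_I$, bilinearly expand the partial pairing into good-good, mixed, and bad-bad pieces, absorb the good-good and mixed pieces into the diagonal term, and collect the bad-bad interactions into the stated error sums. Your good-good argument (the partial Haar shift $P_{\mathcal G}$ is an $L^2$ contraction, combined with \eqref{CZLpcontrol}) is exactly the one in the paper, and your localization of the bad-interval contributions to the parent and grandparent scales ($J_+\in\{K,\widehat K\}$) is also the key observation the paper uses.

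The concern is in your treatment of the mixed $(g_i,b_j)$ pieces. You propose to "expand these Haar coefficients via $g_1 = f_1 - b_1$" and then bound "the averages ... by $\lambda_1$." Two issues. First, re-substituting $g_1 = f_1 - b_1$ into the $(g_1,b_2)$ term reproduces the $(b_1,b_2)$ term and trades $(g_1,b_2)$ for $(f_1,b_2)$, which is the same decomposition you started from --- the expansion is circular as a reduction step. Second, and more importantly, in the non-doubling setting it is \emph{not} true that $\langle g_1\rangle_{L_\pm}\lesssim\lambda_1$ for good children $L_\pm$: the CZ pieces $g_i$ are not $L^\infty$-bounded here, and averages of $g_1$ over intervals that meet the halos $\widehat I_k$ can be large. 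What survives is the oscillation control, namely item \eqref{CZBMOcontrol}: $\|g_i\|_{\BMO(\mu)}\leq\lambda_i$, which gives $|\langle g_i,h_L\rangle|\lesssim\sqrt{m(L)}\,\lambda_i$. The paper's proof uses precisely this together with $|\langle b_{1,j},h_{J_+}\rangle|\leq\|b_{1,j}\|_{L^1(\mu)}\|h_{J_+}\|_\infty\lesssim\|b_{1,j}\|_{L^1(\mu)}/\sqrt{m(J_+)}$ and the fact that for each bad $I_j$ only the two scales $J_+\in\{I_j,\widehat I_j\}$ contribute; the ratio $\sqrt{m(J_-)/m(J_+)}\leq[\mu]_{\mathrm{sib}}^{1/2}$ then produces the $[\mu]_{\mathrm{sib}}^{1/2}$ factor, and $\sum_j\|b_{1,j}\|_{L^1(\mu)}\lesssim\|f_1\|_{L^1(I,\mu)}$ closes the estimate. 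Replace your average argument with this $\BMO$-plus-$L^1$ pairing and the proof is correct.

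A minor further remark: the third and fourth error sums in the statement arise already from the pure $(b_1,b_2)$ interaction (cases $I_j=I_k^s$, $I_j=(\widehat I_k)^s$, $I_k=(\widehat I_j)^s$ after expanding $\langle f_i,h_{I_k}\rangle$ into children averages and introducing the second-generation family $\mathcal B_1(I)$), not from the mixed terms as your sketch suggests; the mixed terms are absorbed entirely into the $[\mu]_{\mathrm{sib}}^{1/2}$-diagonal.
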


\begin{proof} One has
\begin{equation*}
\mathrm{LHS} \leq | \sum_{J\in \mathcal{G}(I)} \fonehaarJplus \ftwohaarJminus|+ | \sum_{J\in \mathcal{G}(I)} \fonehaarJminus\ftwohaarJplus|=:\mathrm{A}+\mathrm{B}.
\end{equation*}
Both terms are handled similarly so let us focus on A. We pick $\lambda_i=16\langle f_i\rangle_{I}$, $i=1,2$ in the Calder\'on-Zygmund decomposition above applied to $f_i\one_I$. Then 
\begin{equation*}
\begin{split}
    | \sum_{J\in \mathcal{G}(I)} &\fonehaarJplus \ftwohaarJminus| \leq  |\sum_{J\in \mathcal{G}(I)}\gonehaarJplus \gtwohaarJminus|+|\sum_{J\in \mathcal{G}(I)}\bonehaarJplus \gtwohaarJminus|\\
    &+|\sum_{J\in \mathcal{G}(I)}\gonehaarJplus \btwohaarJminus|+|\sum_{J\in \mathcal{G}(I)}\bonehaarJplus \btwohaarJminus|\\
    &=:\mathrm{(GG)}+\mathrm{(BG)}+\mathrm{(GB)}+\mathrm{(BB)}.
\end{split}
\end{equation*}
The $L^2$-bound for $g_i$ yields $\mathrm{(GG)} \lesssim \avfoneI\avftwoI \mu(I)$. Let us estimate (BG). First, $\bonejhaarJplus=0$ unless $J_{+}\in\{I_j,\hat{I_{j}}\}$. Indeed, $\text{supp}(b_j)=\hat{I}_j$ se we only need to consider $J$ so that $J_{+}\cap \hat{I}_j\neq \emptyset$, i.e. $J_{+}\subseteq \hat{I}_j$ or $\hat{I}_j\subsetneq J_{+}$. In the latter case, one gets zero because $h_{J_{+}}$ is constant in $\text{supp}(b_{1,j})$, which has integral zero. If $J_{+}\subseteq I_j^s$, we get zero for similar reasons. We are left with $J_{+}=\hat{I}_j$ or $J_{+}= I_j$, for $J_{+}\subsetneq I_j$ is impossible beacause $J\in \mathcal{G}(I)$. Hence
\begin{align*}
    \mathrm{(BG)} & \leq \sum_j \sum_{\substack{J\in \mathcal{G}(I):\\  J_{+} \in \{I_j,\hat{I}_j\}}} |\bonejhaarJplus||\gtwohaarJminus| \\
    & \lesssim \sum_j\sum_{\substack{J\in \mathcal{G}(I):\\  J_{+} \in \{I_j,\hat{I}_j\}}} \frac{1}{\sqrt{m(J_{+})}}\|b_{1,j}\|_{L^{1}(\mu)}\sqrt{m(J_{-})}\|g_2\|_{\BMO(\mu)} \\
    & \lesssim \sum_j\sum_{\substack{J\in \mathcal{G}(I):\\  J_{+} \in \{I_j,\hat{I}_j\}}} \frac{\sqrt{m(J_{-})}}{\sqrt{m(J_{+})}}\int_{I_j} f_1d\mu\,\avftwoI \\
    & \lesssim [\mu]_{sib}^{1/2}\sum_j  \int_{I_j} f_1d\mu \, \avftwoI\lesssim [\mu]_{sib}^{1/2}\avfoneI \avftwoI \mu(I),
\end{align*}
using the sibling balanced condition and the disjointness of $\{I_j\}$. The term (GB) is estimated similarly to (BG). To estimate (BB), the only intervals $J$ contributing to the sum are so that $J_{+}\in \{I_j,\hat{I}_j\}$ and $J_{-}\in \{I_k,\hat{I}_k\}$. Therefore, 
\begin{align*}
    \mathrm{(BB)} & \leq \sum_{j,k} \sum_{\substack{J\in \mathcal{G}(I):\\ J_{+}\in \{I_j,\hat{I}_j\}\\J_{-}\in \{I_k,\hat{I}_k\}}}|\bonejhaarJplus \btwokhaarJminus| \\
    & \leq \sum_{j,k\colon \hat{I_j}=(\hat{I}_k)^{s}}|\bonejhaarIjhat \btwokhaarIkhat |+  \sum_{j,k\colon I_j=(I_k)^s}|\bonejhaarIj \btwokhaarIk |\\
    & +\sum_{j,k\colon I_j=(\hat{I}_k)^s}|\bonejhaarIj \btwokhaarIkhat |+ \sum_{j,k\colon I_k=(\hat{I}_j)^s}|\bonejhaarIjhat \btwokhaarIk|\\
    &=:\mathrm{(BB)}_1+\mathrm{(BB)}_2+\mathrm{(BB)}_3+\mathrm{(BB)}_4.
\end{align*}
Using that $|\bonejhaarIjhat|=\sqrt{m(\hat{I}_j)}\langle f_1\rangle_{I_j}$ and a similar equality for $b_{2,k}$, we have 
$$
\mathrm{(BB)}_1\leq \sum_{\substack{S,T\in \mathcal{B}(I)\colon \\  \hat{S}=\hat{T}^s}} \langle f_1\rangle_{S}\langle f_2\rangle_{T} \sqrt{m(\hat{S})}\sqrt{m(\hat{T})}.
$$
For $\mathrm{(BB)}_2$ we use $\bonejhaarIj=\sqrt{m(I_j)} (\langle f_1\rangle_{(I_j)_{-}}-\langle f_1\rangle_{(I_j)_{+}})$, and so 
\begin{equation*}
    \begin{split}
        \mathrm{(BB)}_2 & \leq \sum_{j,k \colon I_j=(I_k)^s} \sqrt{m(I_j)}\sqrt{m(I_k)} |\langle f_1\rangle_{(I_j)_{-}}-\langle f_1\rangle_{(I_j)_{+}}|  |\langle f_2\rangle_{(I_k)_{-}}-\langle f_2\rangle_{(I_k)_{+}}|\\
        & \leq \sum_{j,k \colon I_j=(I_k)^s} \sqrt{m(I_j)}\sqrt{m(I_k)} \sum_{P\in \D_1(I_j),Q\in \D_1(I_k)} \langle f_1\rangle_{P}\langle f_2\rangle_{Q}.
    \end{split}
\end{equation*}
In the sum above, if $P\in \mathcal{B}(I_j)\subset \mathcal{B}_{1}(I)$ we keep it as it is. Otherwise, we estimate $\langle f_1\rangle_{P}\lesssim \langle f_1\rangle_{I_j}$, with $I_j\in \mathcal{B}(I)$. We argue similarly for each $Q$. That gives rise to four possibilities, which can all be bounded by the right hand side in the statement:
\begin{equation*}
  \begin{split}
     \mathrm{(BB)}_2 & \lesssim \sum_{S,T\in \mathcal{B}_{1}(I)\colon \hat{S}=(\hat{T})^s}\sqrt{m(\hat{S})}\sqrt{m(\hat{T})}\avfoneS \avftwoT + \sum_{\substack{S\in \mathcal{B}(I),T\in \mathcal{B}_1(I)\\ S=(\hat{T})^s} } \sqrt{m(S)}\sqrt{m(\hat{T})}\avfoneS\avftwoT \\ 
     & +\sum_{\substack{T\in \mathcal{B}(I),S\in \mathcal{B}_1(I)\\ T=(\hat{S})^s} } \sqrt{m({T})}\sqrt{m(\hat{S})}\avfoneS\avftwoT +\sum_{\substack{S,T\in \mathcal{B}(I)\\ {S}={T}^s} } \sqrt{m({S})}\sqrt{m({T})}\avfoneS\avftwoT.
  \end{split}  
\end{equation*}
Finally, $\mathrm{(BB)}_3$ and $\mathrm{(BB)}_4$ are estimated following similar ideas and we omit the details.
\end{proof}

A family of intervals $\mathcal{S}$ is said to be $\eta$-sparse if for each $I \in \mathcal{S}$ the exists $E_I \subset I$ such that $\mu(E_I) \geq \eta \mu(I)$ and the family $\{E_I\}$ is pairwise disjoint. Since $\mu$ is non-atomic, \cite[Theorem 1.3]{Hanninen} implies that $\mathcal{S}$ being $\eta$-sparse is equivalent to the Carleson packing condition
$$
\sum_{I\supsetneq J\in\mathcal{S}} \mu(J) \leq \frac{1}{\eta} \mu(I), \quad \mbox{for all }I\in \mathcal{S}. 
$$
If $\mathcal{S}$ is sparse, we define
\begin{equation*}
    \mathcal{A}_{\mathcal{S}}(f_1,f_2):=\sum_{I\in \mathcal{S}} \avfoneI \avftwoI \mu(I),
\end{equation*}
and the non-standard forms
\begin{equation*}
         \begin{split}
\mathcal{E}^{\mathcal{S}}_1(f_1,f_2):=&\sum_{\substack{I,J\in \mathcal{S}\\ \hat{I}=(\hat{J})^{s}}}\avfoneI \avftwoJ \,m(\hat{I})^{1/2} m(\hat{J})^{1/2},\\
\mathcal{E}^{\mathcal{S}}_2(f_1,f_2):=&\sum_{\substack{I,J\in \mathcal{S}\\ {I}=(\hat{J})^{s}}}\avfoneI \avftwoJ \,m(I)^{1/2} m(\hat{J})^{1/2},\\
\mathcal{E}^{\mathcal{S}}_3(f_1,f_2):=&\sum_{\substack{I,J\in \mathcal{S}\\ {J}=(\hat{I})^{s}}}\avfoneI \avftwoJ \,m(\hat{I})^{1/2} m({J})^{1/2}.
         \end{split}
    \end{equation*}

\begin{thm}[Sparse Domination for $\Hd$]\label{sparseresultforH}
    If $\mu$ is sibling balanced and atomless, there exists $\eta\in (0,1) $ such that for each pair $(f_1,f_2)$ of nonnegative compactly supported bounded functions there exists an $\eta$-sparse collection $\mathcal{S}\subset \D$ such that 
    $$|\langle\Hd f_1, f_2\rangle|\lesssim [\mu]_{sib}^{1/2}\mathcal{A}_{\mathcal{S}}(f_1,f_2)+\sum_{i=1}^{3}\mathcal{E}_{i}^{\mathcal{S}}(f_1,f_2).
    $$
\end{thm}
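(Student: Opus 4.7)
The plan is to follow the recursive sparse-domination scheme driven by the iteration step in Lemma~\ref{lem:iterStep}. By standard reductions we may assume both $f_1$ and $f_2$ are supported in a single dyadic interval $I_0 \in \D$. Set $\mathcal{S}_0 = \{I_0\}$ and, given $\mathcal{S}_k$, define $\mathcal{S}_{k+1} = \bigsqcup_{I \in \mathcal{S}_k} \mathcal{B}(I)$, where $\mathcal{B}(I)$ is the family produced by the simultaneous Calder\'on--Zygmund decomposition of $f_1 \mathbbm{1}_I$ and $f_2 \mathbbm{1}_I$ at heights $16\avfoneI$ and $16\avftwoI$; set $\mathcal{S} = \bigsqcup_k \mathcal{S}_k$. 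The standard $L^1$ bound $\sum_{J \in \mathcal{B}(I)} \mu(J) \leq \mu(I)/8$ yields sparsity: the sets $E_I := I \setminus \bigcup \mathcal{B}(I)$ are pairwise disjoint across $\mathcal{S}$ and satisfy $\mu(E_I) \geq (7/8)\mu(I)$.

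Because every $J \in \D(I_0)$ lies in $\mathcal{G}(I)$ for a unique $I \in \mathcal{S}$, there is a disjoint partition $\D(I_0) = \bigsqcup_{I \in \mathcal{S}} \mathcal{G}(I)$. Expanding the bilinear form in the Haar basis,
\[
\langle \Hd f_1, f_2\rangle = \sum_{J \in \D(I_0)} \big[\fonehaarJplus \ftwohaarJminus - \fonehaarJminus \ftwohaarJplus\big],
\]
and applying Lemma~\ref{lem:iterStep} to each $I \in \mathcal{S}$ collects the diagonal contributions into $[\mu]_{sib}^{1/2} \mathcal{A}_\mathcal{S}(f_1, f_2)$. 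It then remains to reorganize the three off-diagonal sums produced by the lemma, summed over $I \in \mathcal{S}$, into the forms $\mathcal{E}_1^\mathcal{S}, \mathcal{E}_2^\mathcal{S}, \mathcal{E}_3^\mathcal{S}$.

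This matching step is where the main obstacle lies. Every $S,T$ appearing in the error terms belongs to $\mathcal{B}^1(I) \subseteq \mathcal{S}$, and each $S \in \mathcal{S}$ lies in $\mathcal{B}^1(I)$ for at most two choices of $I$ (its parent and grandparent in the stopping tree), so the overcounting is bounded. The configurations $\hat{S} = (\hat{T})^s$ and $S = (\hat{T})^s$ produced by Lemma~\ref{lem:iterStep} translate directly into $\mathcal{E}_1^\mathcal{S}$ and $\mathcal{E}_2^\mathcal{S}$, $\mathcal{E}_3^\mathcal{S}$, respectively. The genuinely subtle case is the sibling configuration $S = T^s$ with $S, T \in \mathcal{B}(I)$, which does not fit any $\mathcal{E}_i^\mathcal{S}$. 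Here I would combine the $m$-increasing estimate of Proposition~\ref{sibbalanceincreas}, giving $m(S)^{1/2} m(T)^{1/2} \lesssim [\mu]_{sib}^{3/2} m(\hat{S})$, with the stopping bounds $\langle f_i\rangle_{\hat{S}} \lesssim \langle f_i\rangle_{I}$ (valid because $\hat{S}$ is not itself a stopping interval) to obtain
\[
\avfoneS \avftwoT\, m(S)^{1/2} m(T)^{1/2} \lesssim [\mu]_{sib}^{3/2}\, \avfoneI \avftwoI\, \mu(\hat{S}),
\]
and then use the pairwise disjointness of the distinct sibling parents $\hat{S} \subseteq I$ to absorb this contribution into $\mathcal{A}_\mathcal{S}(f_1, f_2)$. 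The bulk of the work will be carrying out these combinatorial checks cleanly while tracking the final dependence on $[\mu]_{sib}$.
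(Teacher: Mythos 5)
Your proposal follows the same recursive sparse-domination scheme as the paper, and you correctly identify the sibling configuration $S=T^s$ with $S,T\in\mathcal{B}(I)$ as the only piece of Lemma~\ref{lem:iterStep} that escapes $\mathcal{E}_1^\mathcal{S},\mathcal{E}_2^\mathcal{S},\mathcal{E}_3^\mathcal{S}$. Your mechanism for disposing of it is a small but genuine variant of the paper's: you push it into the $\mathcal{A}$ term at the current iteration level via the stopping bounds $\langle f_i\rangle_{\hat{S}}\lesssim\langle f_i\rangle_I$, whereas the paper names this extra contribution $\mathcal{E}_4^{\mathcal{S}_0}$, bounds it by sparse-averaging terms over the parents $\hat{I}$, and then enlarges $\mathcal{S}_0$ by adjoining those parents (checking sparsity is preserved via Carleson packing). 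Your route avoids enlarging the sparse family, which is slightly cleaner; both are correct.

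The one gap is quantitative. Passing through the m-increasing bound $m(S)^{1/2}m(T)^{1/2}\lesssim[\mu]_{sib}^{3/2}m(\hat{S})$ gives a coefficient $[\mu]_{sib}^{3/2}$ on the $\mathcal{A}_\mathcal{S}$ term, which is weaker than the $[\mu]_{sib}^{1/2}$ the theorem asserts. To recover $[\mu]_{sib}^{1/2}$ you should bypass the m-increasing inequality entirely, as the paper does: with $S=T^s$ assume WLOG $\mu(S)\leq\mu(T)$, so $\mu(\hat{S})\leq 2\mu(T)$; combined with the sibling-balanced estimate $m(T)\leq[\mu]_{sib}m(S)$ and the trivial $m(S)\leq\mu(S)$, this gives
\begin{equation*}
\frac{\mu(\hat{S})^2\,m(S)^{1/2}m(T)^{1/2}}{\mu(S)\mu(T)}
= \mu(\hat{S})\cdot\frac{\mu(\hat{S})}{\mu(T)}\cdot\frac{m(S)^{1/2}m(T)^{1/2}}{\mu(S)}
\leq 2[\mu]_{sib}^{1/2}\,\mu(\hat{S}),
\end{equation*}
which inserted into your chain of inequalities yields
$\langle f_1\rangle_S\langle f_2\rangle_T\,m(S)^{1/2}m(T)^{1/2}\lesssim[\mu]_{sib}^{1/2}\langle f_1\rangle_I\langle f_2\rangle_I\,\mu(\hat{S})$,
and the theorem's constant follows after summing over the pairwise-disjoint $\hat{S}\subset I$.
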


\begin{proof}
Let     
$$\mathcal{E}^{\mathcal{S}}_4(f_1,f_2):=\sum_{\substack{I,J\in \mathcal{S}\colon  I=J^{s}}}\avfoneI \avftwoJ\, m(I)^{1/2} m(J)^{1/2}.
$$
Following the proof scheme of \cite[Theorem 2.3]{CAPW}, Lemma \ref{lem:iterStep} implies that there exists $\eta_0\in (0,1)$ and an $\eta_0$-sparse family $\mathcal{S}_0$ such that
$$|\langle\Hd f_1, f_2\rangle|\lesssim [\mu]_{sib}^{1/2}\mathcal{A}_{\mathcal{S}_{0}}(f_1,f_2)+\sum_{i=1}^{4}\mathcal{E}_{i}^{\mathcal{S}_0}(f_1,f_2).$$
We claim that there exists a sparse $\mathcal{S}\supset \mathcal{S}_0$ so that
$$\mathcal{E}_4^{\mathcal{S}_{0}}(f_1,f_4)\leq C [\mu]_{sib}^{1/2}\mathcal{A}_{\mathcal{S}}(f_1,f_2).$$
Indeed, for each $I\in \mathcal{S}_{0}$ such that $I^{s}\in \mathcal{S}_0$, 
$$\avfoneI \langle f_2\rangle_{I^s}\, m(I)^{1/2} m(I^s)^{1/2}\leq \langle f_1\rangle_{\hat{I}} \langle f_2\rangle_{\hat{I}} \mu(\hat{I})\frac{m(I)^{1/2}m(I^s)^{1/2}\mu(\hat{I})}{\mu(I)\mu(I^s) }.$$
Without loss of generality assume that $\mu(I)\leq \mu(I^s)$. Then 
$$\frac{\mu(\hat{I})}{\mu(I^s)}\frac{m(I)^{1/2}m(I^s)^{1/2}}{\mu(I)}\leq 2\frac{[\mu]_{sib}^{1/2}m(I)}{\mu(I)}\leq 2[\mu]_{sib}^{1/2}.$$
Then, it is enough to define $\mathcal{S}$ by adding to $\mathcal{S}_0$ new intervals $\hat{I}$ whenever $I$ and $I^s$ are both in $\mathcal{S}_0$. Then $\mathcal{S}$ is $\eta$-sparse for some $0<\eta=\eta(\eta_0)<\eta_0$, which is enough.
\end{proof}

\subsection{Adapted weight classes} Let $1<p<\infty$. A weight $\omega \in A_p^{sib}(\mu)$ if 
    \begin{equation*}
        [\omega]_{A_{p}^{sib}(\mu)}:=\sup_{I.J\in \D}c_{p}(I,J)\langle \omega\rangle_{I} \langle \sigma\rangle_J^{p-1} <\infty,
    \end{equation*}
    where 
    \begin{equation*}\label{defconstants}
        c_p(I,J)=\begin{cases}
            1, \text{ if }I=J,\\
              \left(\frac{m(\hat{I})}{\mu(I)}\right)^{p-1}\frac{m(\hat{J})}{\mu(J)} , \text{ if }\hat{I}=(\hat{J})^s,\\
             \left(\frac{m(\hat{I})}{\mu(I)}\right)^{p-1}\frac{m(J)}{\mu(J)} , \text{ if }J=(\hat{I})^s,\\
              \left(\frac{m(I)}{\mu(I)}\right)^{p-1}\frac{m(\hat{J})}{\mu(J)} , \text{ if }I=(\hat{J})^s,\\
             0, \text{ for any other case}.
        \end{cases}
    \end{equation*}
    
\begin{prop}\label{sibweightsprop} Let $1<p<\infty$ and $i\in\{1,2,3\}$. For all $\omega\in A_p^{sib}(\mu)$, for all $\eta$ sparse families $\mathcal{S}$, 
\begin{equation}
|\mathcal{E}_i^{\mathcal{S}} (f,g\omega)|\lesssim_{p,\eta} [\mu]_{sib}^{\gamma(p)}[\omega]^{1/p}_{A_p^{sib}(\mu)}[\omega]_{A_p(\mu)}^{\frac{(p-1)^2+1}{p(p-1)}} \|f\|_{L^{p}(\omega)}\|g\|_{L^{p'}(\omega)},
\end{equation}    
where $\gamma(p)=1/2-\min\{1/p,1/p'\}$. 
\end{prop}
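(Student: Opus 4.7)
The plan is to control each $\mathcal{E}_i^{\mathcal{S}}(f,g\omega)$ via a Hölder-type decomposition of its summand that extracts the prefactor $[\mu]_{sib}^{\gamma(p)}[\omega]_{A_p^{sib}(\mu)}^{1/p}$, and then reduce to the classical weighted sparse Carleson embeddings that provide the power of $[\omega]_{A_p(\mu)}$. I will describe the scheme for $\mathcal{E}_1$ in detail; the other two forms follow analogously using the third or fourth entry of $c_p(I,J)$ in place of the second.

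\textbf{Step 1: Splitting off the geometric factor.} The key algebraic identity is
\begin{equation*}
m(\hat I)^{1/2}m(\hat J)^{1/2}=\bigl(m(\hat I)^{p-1}m(\hat J)\bigr)^{1/p}\bigl(m(\hat J)/m(\hat I)\bigr)^{(p-2)/(2p)}.
\end{equation*}
The first factor is exactly the numerator appearing in $c_p(I,J)^{1/p}$ for the configuration $\hat I=\hat J^{s}$, so the $A_p^{sib}(\mu)$ hypothesis yields
\begin{equation*}
\bigl(m(\hat I)^{p-1}m(\hat J)\bigr)^{1/p}\leq [\omega]_{A_p^{sib}(\mu)}^{1/p}\,\frac{\mu(I)^{(p-1)/p}\mu(J)^{1/p}}{\langle\omega\rangle_I^{1/p}\langle\sigma\rangle_J^{(p-1)/p}}.
\end{equation*}
Since $\hat I$ and $\hat J$ are dyadic siblings, the sibling-balanced hypothesis bounds the residue by $[\mu]_{sib}^{|p-2|/(2p)}=[\mu]_{sib}^{\gamma(p)}$, using the identity $|p-2|/(2p)=1/2-\min(1/p,1/p')$. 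For $\mathcal{E}_2$ and $\mathcal{E}_3$ one uses the analogous splits, and the residue is again a ratio of $m$-values at dyadic siblings at the appropriate level.

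\textbf{Step 2: Reduction to single-index sparse sums.} After Step 1 one is left with the bilinear expression
\begin{equation*}
[\mu]_{sib}^{\gamma(p)}[\omega]_{A_p^{sib}(\mu)}^{1/p}\sum_{(I,J)}\frac{\int_I f\,d\mu\cdot\int_J g\omega\,d\mu}{\omega(I)^{1/p}\sigma(J)^{1/p'}}.
\end{equation*}
The structural constraint $\hat I=\hat J^{s}$ gives bounded multiplicity: each $I\in\mathcal{S}$ admits at most two admissible partners $J$ and vice versa, since the admissible $\hat I,\hat J$ are determined up to sibling choice. Applying H\"older to the pair sum with exponents $(p,p')$ and absorbing the multiplicity constant reduces the problem to bounding
\begin{equation*}
S_1:=\sum_{I\in\mathcal{S}}\frac{\bigl(\int_I f\,d\mu\bigr)^p}{\omega(I)},\qquad S_2:=\sum_{J\in\mathcal{S}}\frac{\bigl(\int_J g\omega\,d\mu\bigr)^{p'}}{\sigma(J)},
\end{equation*}
by $[\omega]_{A_p(\mu)}^{1/(p-1)}\|f\|_{L^p(\omega)}^p$ and $[\omega]_{A_p(\mu)}\|g\|_{L^{p'}(\omega)}^{p'}$, respectively. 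Both estimates are classical weighted sparse embeddings: for $S_1$, substitute $F=f\sigma^{-1}$ (so $\|F\|_{L^p(\sigma)}=\|f\|_{L^p(\omega)}$) and apply H\"older with respect to $d\sigma$ on each $\int_I F\,d\sigma$, reducing to the boundedness of the $\sigma$-weighted dyadic maximal operator on $L^p(\sigma)$ after the $A_\infty$-type passage $\sigma(I)\sim\sigma(E_I)$. Raising to $1/p$ and $1/p'$ and multiplying, the total power of $[\omega]_{A_p(\mu)}$ becomes $1/(p(p-1))+(p-1)/p=((p-1)^2+1)/(p(p-1))$, matching the claim.

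\textbf{Main obstacle.} The main obstacle lies in Step 2, specifically in extending the two sparse embeddings to the non-doubling setting with the stated sharp exponents of $[\omega]_{A_p(\mu)}$: the crucial ingredient is the reverse H\"older inequality of Theorem \ref{th:theoremB}(b), which transfers the $A_\infty$-type passage $\sigma(I)\sim\sigma(E_I)$ (respectively $\omega(I)\sim\omega(E_I)$) from the doubling case to the present one and makes the $L^p(\sigma)$- and $L^{p'}(\omega)$-bounds for the dyadic maximal operator available. The Step 1 computation is direct once the symmetry $\gamma(p)=\gamma(p')$ is recognised; the factor $[\mu]_{sib}^{\gamma(p)}$ vanishes precisely at $p=2$, in agreement with the standard reflexive sparse estimate.
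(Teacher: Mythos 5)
Your Step 1 algebra is correct as an identity, but it invokes the $A_p^{sib}(\mu)$ condition in the \emph{wrong orientation}, and this breaks Step 2. You split off $\bigl(m(\hat I)^{p-1}m(\hat J)\bigr)^{1/p}=m(\hat I)^{1/p'}m(\hat J)^{1/p}$, matching the numerator of $c_p(I,J)^{1/p}$, and use $c_p(I,J)\langle\omega\rangle_I\langle\sigma\rangle_J^{p-1}\le[\omega]_{A_p^{sib}(\mu)}$. After substituting back you arrive at the reduced bilinear form
\begin{equation*}
\sum_{(I,J)}\frac{\int_I f\,d\mu\cdot\int_J g\omega\,d\mu}{\omega(I)^{1/p}\,\sigma(J)^{1/p'}},
\end{equation*}
and hence to $S_1=\sum_I\omega(I)^{-1}\bigl(\int_I f\,d\mu\bigr)^p$. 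But $I$ is the interval over which $f$ is averaged; the natural factor that can be absorbed by the $\sigma$-weighted maximal operator is $\sigma(I)$, not $\omega(I)$. Indeed, $S_1\lesssim[\omega]_{A_p(\mu)}^{1/(p-1)}\|f\|_{L^p(\omega)}^p$ is simply false: take $\mu$ Lebesgue, $p=2$, $\omega\equiv\epsilon$ (so $[\omega]_{A_2}=1$) and $f=\mathbbm{1}_I$; then $S_1=|I|/\epsilon$ while $\|f\|_{L^2(\omega)}^2=\epsilon|I|$, and the ratio $1/\epsilon^2$ is unbounded as $\epsilon\to0$.

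The fix is to use the other admissible pair in the supremum, namely $c_p(J,I)\langle\omega\rangle_J\langle\sigma\rangle_I^{p-1}\le[\omega]_{A_p^{sib}(\mu)}$, whose numerator is $m(\hat J)^{1/p'}m(\hat I)^{1/p}$; equivalently, split $m(\hat I)^{1/2}m(\hat J)^{1/2}\le[\mu]_{sib}^{\gamma(p)}m(\hat I)^{1/p}m(\hat J)^{1/p'}$ as the paper does. With that choice the reduced form becomes
\begin{equation*}
\sum_{(I,J)}\frac{\int_I f\,d\mu}{\sigma(I)^{1/p'}}\cdot\frac{\int_J g\omega\,d\mu}{\omega(J)^{1/p}}
=\sum_{(I,J)}\langle f\sigma^{-1}\rangle_{I,\sigma}\,\langle g\rangle_{J,\omega}\,\sigma(I)^{1/p}\omega(J)^{1/p'},
\end{equation*}
which is exactly what the $\eta$-sparse estimate $\sigma(I)\lesssim_\eta[\omega]_{A_p(\mu)}^{1/(p-1)}\sigma(E_I)$, $\omega(J)\lesssim_\eta[\omega]_{A_p(\mu)}\omega(E_J)$, bounded multiplicity, H\"older, and the boundedness of $\MaximalD^\sigma$ on $L^p(\sigma)$ and $\MaximalD^\omega$ on $L^{p'}(\omega)$ control, giving the claimed exponent $((p-1)^2+1)/(p(p-1))$.

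A secondary point: your ``Main obstacle'' paragraph misidentifies the key ingredient. The proposition does \emph{not} use the reverse H\"older inequality of Theorem \ref{th:theoremB}(b). The passage $\sigma(I)\lesssim\sigma(E_I)$ follows directly from $\eta$-sparseness and H\"older, since $\eta\,\omega(I)^{1/p}\le\frac{\omega(I)^{1/p}}{\mu(I)}\int_{E_I}\omega^{1/p}\omega^{-1/p}\,d\mu\le\omega(E_I)^{1/p}\bigl(\langle\omega\rangle_I\langle\sigma\rangle_I^{p-1}\bigr)^{1/p}\le\omega(E_I)^{1/p}[\omega]_{A_p(\mu)}^{1/p}$. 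No reverse H\"older or doubling is needed here; what is needed, and what you correctly anticipated, is the boundedness of the weighted dyadic maximal operators, but that is purely measure-theoretic (Doob's inequality with respect to the measures $\sigma\,d\mu$ and $\omega\,d\mu$) and holds for any Borel measure.
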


\begin{proof}
We only deal with $\mathcal{E}_1^{\mathcal{S}}$ because the other two forms are similar. Since $\mu$ is sibling balanced, then for all $1<p<\infty$
 $$\sqrt{m(I)}\sqrt{m(I^s)}\leq [\mu]_{sib}^{1/2-\min\{1/p,1/p'\}}m(I)^{1/p}m(I^s)^{1/p'}.$$
 Using the standard notation $\langle f \rangle_{I,\omega} = \frac{1}{\omega(I)}\int_I f \omega d\mu$, with $\omega(I)=\int_I \omega d\mu$
\begin{equation*}
    \begin{split}
\mathcal{E}^{\mathcal{S}}_1(f,g\omega)\leq & [\mu]_{sib}^{\gamma(p)}\sum_{\substack{I,J\in \mathcal{S}\\ \hat{I}=(\hat{J})^{s}}}\avfI \langle g\omega \rangle_{J} \,m(\hat{I})^{1/p}m(\hat{J})^{1/p'} \\
\leq &[\mu]_{sib}^{\gamma(p)}\sum_{\substack{I,J\in \mathcal{S}\\ \hat{I}=(\hat{J})^{s}}}\langle f\sigma^{-1}\rangle_{I,\sigma} \langle g\rangle_{J,\omega} \sigma(I)^{1/p}\omega(J)^{1/p'}\frac{\omega(J)^{1/p}\sigma(I)^{1/p'}}{\mu(J)\mu(I)}\,m(\hat{J})^{1/p'}m(\hat{I})^{1/p}\\
=&[\mu]_{sib}^{\gamma(p)}\sum_{\substack{I,J\in \mathcal{S}\\ \hat{I}=(\hat{J})^{s}}}\langle f\sigma^{-1}\rangle_{I,\sigma} \langle g\rangle_{J,\omega} \sigma(I)^{1/p}\omega(J)^{1/p'} \left(\frac{\omega(J)}{\mu(J)}\frac{\sigma(I)^{p-1}}{\mu(I)^{p-1}}c(J,I)\right)^{1/p}\\
\leq & [\mu]_{sib}^{\gamma(p)}[\omega]_{A_{sib}^p(\mu)}^{1/p}\sum_{\substack{I,J\in \mathcal{S}\\
\hat{I}=(\hat{J})^{s}}}\langle f\sigma^{-1}\rangle_{I,\sigma} \langle g\rangle_{J,\omega} \sigma(I)^{1/p}\omega(J)^{1/p'}.
    \end{split}
\end{equation*}
Since $\mathcal{S}$ is $\eta$-sparse, 
 \begin{equation*}
     \begin{split}
        \eta \omega(I)^{1/p} \leq \frac{\mu(E_I)}{\mu(I)}\omega(I)^{1/p}=&\frac{\omega(I)^{1/p}}{\mu(I)}\int_{E_I} \omega^{1/p}\omega^{-1/p}d\mu\\
         \leq &\frac{\omega(I)^{1/p}}{\mu(I)}\omega(E_I)^{1/p}\sigma(I)^{1/p'}\leq \omega(E_I)^{1/p} [\omega]_{A_p(\mu)}^{1/p},
     \end{split}
 \end{equation*}
 so $\omega(I)\lesssim_{\eta} [\omega]_{A_p(\mu)} \omega(E_I)$. Similarly, $\sigma(E_I)\lesssim_{\eta} [\sigma]_{A_{p'}(\mu)} \sigma(E_I)=[\omega]_{A_p(\mu)}^{\frac{1}{p-1}} \sigma(E_I) $. Hence, 
 \begin{equation*}
     \begin{split}
\mathcal{E}^{\mathcal{S}}_1(f,g\omega)\leq [\mu]_{sib}^{\gamma(p)}[\omega]_{A_{sib}^p(\mu)}^{1/p}[\omega]_{A_p(\mu)}^{\frac{(p-1)^2+1}{p(p-1)}}\sum_{\substack{I,J\in \mathcal{S}\\
\hat{I}=(\hat{J})^{s}}}\langle f\sigma^{-1}\rangle_{I,\sigma} \langle g\rangle_{J,\omega} \sigma(E_I)^{1/p}\omega(E_J)^{1/p'}.
     \end{split}
 \end{equation*}
 Finally, one has
 \begin{equation*}
 \begin{split}
         \sum_{\substack{I,J\in \mathcal{S}\\
\hat{I}=(\hat{J})^{s}}}\langle f\sigma^{-1}\rangle_{I,\sigma} \langle g\rangle_{J,\omega} &\sigma(E_I)^{1/p}\omega(E_J)^{1/p'}\lesssim  (\sum_{I\in \mathcal{S}} \langle f\sigma^{-1}\rangle_{I,\sigma}\sigma(E_I))^{1/p} (\sum_{J\in \mathcal{S}} \langle g\rangle_{J,\omega}^{p'}\omega(E_J))^{1/p'} \\
\leq & \|\MaximalD^{\sigma}(f\sigma^{-1})\|_{L^{p}(\sigma)} \|\MaximalD^{\omega}(g)\|_{L^{p}(\omega)}\lesssim_{p} \|f\|_{L^{p}(\omega)}\|g\|_{L^{p'}(\omega)}.
 \end{split}
 \end{equation*}
\end{proof}

\begin{cor} \label{weightedresultforH}
Let $1<p<\infty$. If $\omega \in A_p^{sib}(\mu)$, then $\Hd$ extends to a bounded operator on $L^p(\omega).$ In particular, $\Hd$ is bounded on $L^p(\mu)$ for all sibling balanced, atomless $\mu$.
\end{cor}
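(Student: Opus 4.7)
The plan is to deduce the weighted bound by testing $\mathcal{H}$ against the bilinear form $\langle \mathcal{H} f_1, f_2\omega \rangle$ and applying the sparse domination provided by Theorem \ref{sparseresultforH}, then estimating each summand separately. By the standard density argument and the duality pairing $\langle f, g\rangle_\omega = \int f g\,\omega\, d\mu$ identifying $L^p(\omega)^* = L^{p'}(\omega)$, it suffices to show
\[
|\langle \mathcal{H} f_1, f_2 \omega\rangle| \lesssim_{[\omega]_{A_p^{sib}(\mu)},p,\musib} \|f_1\|_{L^p(\omega)}\,\|f_2\|_{L^{p'}(\omega)}
\]
for nonnegative, bounded, compactly supported $f_1,f_2$. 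Theorem \ref{sparseresultforH} then produces an $\eta$-sparse family $\mathcal{S}$ with
\[
|\langle \mathcal{H} f_1, f_2\omega\rangle| \lesssim [\mu]_{sib}^{1/2}\mathcal{A}_{\mathcal{S}}(f_1,f_2\omega) + \sum_{i=1}^{3}\mathcal{E}_i^{\mathcal{S}}(f_1,f_2\omega),
\]
so it remains to bound each of the four terms on the right.

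For the three error forms $\mathcal{E}_i^{\mathcal{S}}(f_1,f_2\omega)$ one applies Proposition \ref{sibweightsprop} verbatim, with $f=f_1$ and $g=f_2$; this directly yields the desired bound with a constant depending polynomially on $[\omega]_{A_p^{sib}(\mu)}$, $[\omega]_{A_p(\mu)}$ and $\musib$. For the principal term $\mathcal{A}_{\mathcal{S}}(f_1,f_2\omega) = \sum_{I\in\mathcal{S}}\langle f_1\rangle_I \langle f_2\omega\rangle_I \mu(I)$, the argument is the classical sparse-form estimate: noting that taking $I=J$ in the supremum defining $[\omega]_{A_p^{sib}(\mu)}$ gives $\omega \in A_p(\mu)$ with $[\omega]_{A_p(\mu)} \le [\omega]_{A_p^{sib}(\mu)}$, I rewrite $\langle f_1\rangle_I = \frac{\sigma(I)}{\mu(I)}\langle f_1\sigma^{-1}\rangle_{I,\sigma}$ and $\langle f_2\omega\rangle_I = \frac{\omega(I)}{\mu(I)}\langle f_2\rangle_{I,\omega}$, use the $A_p$ condition in the form $\omega(I)^{1/p}\sigma(I)^{1/p'}\le [\omega]_{A_p(\mu)}^{1/p}\mu(I)$ to obtain $\frac{\sigma(I)\omega(I)}{\mu(I)}\lesssim \sigma(I)^{1/p}\omega(I)^{1/p'}$, and finally replace $\sigma(I),\omega(I)$ by $\sigma(E_I),\omega(E_I)$ at the cost of a factor $[\omega]_{A_p(\mu)}^{1/(p-1)}+[\omega]_{A_p(\mu)}$ exactly as in the proof of Proposition \ref{sibweightsprop}. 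Cauchy--Schwarz in $\ell^p\times \ell^{p'}$ over $I\in\mathcal{S}$, disjointness of $\{E_I\}$, and the boundedness of the weighted dyadic maximal functions $\MaximalD^\sigma$ on $L^p(\sigma)$ and $\MaximalD^\omega$ on $L^{p'}(\omega)$ then conclude
\[
\mathcal{A}_{\mathcal{S}}(f_1,f_2\omega) \lesssim_{[\omega]_{A_p(\mu)},p}\|f_1\|_{L^p(\omega)}\|f_2\|_{L^{p'}(\omega)}.
\]

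There is no significant obstacle here: both the sparse form estimate and the error form estimates have already been established. The only point that requires a small observation is the unweighted statement at the end of the corollary. For this, I note that the constant weight $\omega\equiv 1$ lies in $A_p^{sib}(\mu)$ for every sibling balanced $\mu$, since $c_p(I,J)\le 1$ in each admissible configuration: indeed, $m(\widehat{I}) = \mu(I)\mu(I^s)/\mu(\widehat{I}) \le \mu(I)$, so $m(\widehat{I})/\mu(I) \le 1$, and likewise $m(J)/\mu(J),m(I)/\mu(I)\le 1$, so $[1]_{A_p^{sib}(\mu)}\le 1$. Applying the weighted statement with $\omega\equiv 1$ gives the unweighted $L^p(\mu)$ bound for $\mathcal{H}$ whenever $\mu$ is sibling balanced and atomless, completing the argument.
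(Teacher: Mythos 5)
Your proposal is correct and follows essentially the same route as the paper: duality, sparse domination via Theorem \ref{sparseresultforH}, and Proposition \ref{sibweightsprop} for the error forms $\mathcal{E}_i^{\mathcal{S}}$. The only divergence is that you prove the bound for the principal sparse form $\mathcal{A}_{\mathcal{S}}$ by hand (at the cost of a slightly larger power of $[\omega]_{A_p(\mu)}$), whereas the paper cites \cite[Theorem 3.1]{Moen} to get the sharp exponent $\min\{1,p'/p\}$ — immaterial here since the corollary does not claim sharp weight dependence — and you make explicit the observation that $\omega\equiv 1$ has $[1]_{A_p^{sib}(\mu)}\le 1$, which the paper leaves implicit.
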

\begin{proof}
By, \cite[Theorem 3.1]{Moen},
$$|\mathcal{A}_{\mathcal{S}}(f,g\omega)|\lesssim_{\eta,p} [\omega]_{A_{p}(\mu)}^{\min\{1,p'/p\}}\|f\|_{L^{p}(\omega)}\|g\|_{L^{p'}(\omega)}.$$
Therefore, by duality, Theorem \ref{sparseresultforH} and Proposition \ref{sibweightsprop} we get that for all $\omega\in A_p^{sib}(\mu)$, 
\begin{equation*}
    \begin{split}
          \|\Hd \|_{L^{p}(\omega)\rightarrow L^{p}(\omega)}=&\sup_{\|f\|_{L^{p}(\omega)}=1,\|g\|_{L^{p'}(\omega)}=1}\left|\int\Hd fg\omega d\mu\right|\\
          \lesssim_p &[\mu]_{sib}^{1/2} [\omega]_{A_p(\mu)}^{\min\{1,p'/p\}}+[\mu]_{sib}^{\gamma(p)}[\omega]^{1/p}_{A_p^{sib}(\mu)}[\omega]_{A_p(\mu)}^{\frac{(p-1)^2+1}{p(p-1)}}.
    \end{split}
\end{equation*}
\end{proof}



\bibliographystyle{alpha}
\bibliography{references}


\end{document}